\numberwithin{equation}{section}
\theoremstyle{plain}
\newtheorem{theorem}{Theorem}[section]
\newtheorem{lemma}[theorem]{Lemma}
\newtheorem{proposition}[theorem]{Proposition}
\newtheorem{corollary}[theorem]{Corollary}
\theoremstyle{definition}
\newtheorem{definition}[theorem]{Definition}
\newtheorem{remark}[theorem]{Remark}
\newtheorem{question}[theorem]{Question}
\newcommand{\Gr}{\mathrm{Gr}}
\newcommand{\Hilb}{\mathrm{Hilb}}
\newcommand{\Spec}{\operatorname{Spec}}
\newcommand{\GPK}{GPK\textsuperscript{3}}
\newcommand{\Db}{\mathrm{D^b}}
\newcommand{\id}{\mathrm{id}}
\newcommand{\pr}{\mathrm{pr}}
\newcommand{\ttau}{\tilde{\tau}}
\newcommand{\tf}{\tilde{f}}
\newcommand{\ta}{\tilde{a}}
\newcommand{\gl}{\mathfrak{gl}}
\newcommand{\pgl}{\mathfrak{pgl}}
\newcommand{\GL}{\mathrm{GL}}
\newcommand{\PGL}{\mathrm{PGL}}
\newcommand{\tH}{\tilde{H}}
\newcommand{\tG}{\tilde{G}}
\newcommand{\tfh}{\tilde{\fh}}
\newcommand{\tfg}{\tilde{\fg}}
\newcommand{\Aut}{\mathrm{Aut}}
\newcommand{\torfree}{\mathrm{tf}}
\newcommand{\Pic}{\mathrm{Pic}}
\newcommand{\gitq}{/\!\!/}
\newcommand{\act}{\mathrm{act}}
\newcommand{\Ad}{\mathrm{Ad}}
\newcommand{\image}{\mathrm{im}}
\newcommand{\rk}{\operatorname{rk}}
\newcommand{\mult}{\mathrm{mult}}
\newcommand{\tr}{\mathrm{tr}}
\newcommand{\ch}{\mathrm{ch}}
\newcommand{\RGamma}{\mathrm{R}\Gamma}
\newcommand{\Var}{\mathrm{Var}}
\newcommand{\vepsilon}{\varepsilon}
\newcommand{\st}{\;\vline\;} 
\newcommand{\set}[1]{\left\{#1\right\}}
\newcommand{\bA}{\mathbf{A}}
\newcommand{\bF}{\mathbf{F}}
\newcommand{\bZ}{\mathbf{Z}}
\newcommand{\bP}{\mathbf{P}}
\newcommand{\bPv}{\mathbf{P}^{\vee}}
\newcommand{\bQ}{\mathbf{Q}}
\newcommand{\bL}{\mathbf{L}}
\newcommand{\cO}{\mathcal{O}}
\newcommand{\cQ}{\mathcal{Q}}
\newcommand{\cI}{\mathcal{I}}
\newcommand{\cE}{\mathcal{E}}
\newcommand{\cF}{\mathcal{F}}
\newcommand{\cM}{\mathcal{M}}
\newcommand{\cN}{\mathcal{N}}
\newcommand{\cR}{\mathcal{R}}
\newcommand{\cU}{\mathcal{U}}
\newcommand{\cK}{\mathcal{K}}
\newcommand{\cX}{\mathcal{X}}
\newcommand{\cY}{\mathcal{Y}}
\newcommand{\cZ}{\mathcal{Z}}
\newcommand{\fg}{\mathfrak{g}}
\newcommand{\fh}{\mathfrak{h}}
\newcommand{\rT}{\mathrm{T}}
\newcommand{\rK}{\mathrm{K}}
\newcommand{\rH}{\mathrm{H}}
\newcommand{\rN}{\mathrm{N}}
\newcommand{\rO}{\mathrm{O}}
\newcommand{\rQ}{\mathrm{Q}}
\newcommand{\rS}{\mathrm{S}}
\newcommand{\rd}{\mathrm{d}}
\begin{document}

\title{Intersections of two Grassmannians in $\bP^9$}

\author{Lev A. Borisov}
\address{Department of Mathematics\\
Rutgers University\\
Piscataway, NJ 08854} \email{borisov@math.rutgers.edu}

\author{Andrei C\u ald\u araru}
\address{Department of Mathematics \\ 
University of Wisconsin--Madison \\ 
Madison, WI 53706}
\email{andreic@math.wisc.edu}

\author{Alexander Perry}
\address{Department of Mathematics \\ 
Columbia University \\ 
New York, NY 10027}
\email{aperry@math.columbia.edu}

\thanks{
L.B. was partially supported by NSF Grants DMS-1201466 and DMS-1601907. 
A.C. was partially supported by NSF grant DMS-1200721. 
A.P. was partially supported by an NSF postdoctoral fellowship, DMS-1606460.}

\begin{abstract}
We study the intersection of two copies of $\Gr(2,5)$ embedded 
in $\bP^9$, and the intersection of the two projectively dual Grassmannians 
in the dual projective space.  
These intersections are deformation equivalent,  
derived equivalent Calabi--Yau threefolds. 
We prove that generically they are not birational. 
As a consequence, we obtain a counterexample to the birational Torelli problem for 
Calabi--Yau threefolds. 
We also show that these threefolds give a new pair of varieties whose 
classes in the Grothendieck ring of varieties are not equal, but whose difference 
is annihilated by a power of the class of the affine line. 
Our proof of non-birationality 
involves a detailed study of the moduli stack of 
Calabi--Yau threefolds of the above type, which may be of independent interest. 
\end{abstract}

\maketitle


\section{Introduction}
\label{section-intro}
Let $k$ be an algebraically closed field of characteristic $0$. 
Let $W$ be a $10$-dimensional vector space over $k$, 
whose projectivization we denote by $\bP = \bP(W)$.   
Let $V$ be a $5$-dimensional vector space over $k$, 
together with isomorphisms 
\begin{equation*}
\label{gr-data}
\phi_i \colon \wedge^2V \xrightarrow{\sim} W, ~ i=1,2.
\end{equation*} 
By composing the Pl\"{u}cker embedding with the resulting 
isomorphisms $\bP(\wedge^2V) \cong \bP$, we obtain two embeddings 
$\Gr(2,V) \hookrightarrow \bP$, whose images we denote by $\Gr_i \subset \bP$.    
For generic $\phi_i$, the intersection 
\begin{equation}
\label{X}
X = \Gr_1 \cap \Gr_2 \subset \bP
\end{equation}
is a smooth Calabi--Yau threefold (i.e. $\omega_X \cong \cO_X$ and $\rH^j(X, \cO_X) = 0$ for $j = 1,2$) 
with Hodge numbers 
\begin{equation*}
\label{hodge-numbers}
h^{1,1}(X) = 1, \quad h^{1,2}(X) = 51. 
\end{equation*} 
These varieties first appeared in work of Gross and Popescu~\cite{gross-popescu}. 
Later G.~Kapustka~\cite{G-Kapustka} used geometric transitions to construct 
Calabi--Yau threefolds with the above Hodge numbers, which were shown 
by M.~Kapustka~\cite{M-Kapustka} to be isomorphic to Grassmannian intersections of 
the above form. 
Independently, Kanazawa~\cite{kanazawa} gave a direct computation of the 
Hodge numbers of such Grassmannian intersections. 
After these authors, we call $X$ as above a \emph{{\GPK} threefold}. 

The isomorphisms $\phi_i$ naturally determine another {\GPK} threefold, as follows. 
We write \mbox{$\bP^\vee = \bP(W^\vee)$} for the dual projective space. 
Then the induced isomorphisms 
\begin{equation*}
(\phi_i^{-1})^* \colon \wedge^2 V^\vee \xrightarrow{\sim} W^\vee, ~ i =1,2, 
\end{equation*}
correspond to two embeddings $\Gr(2,V^\vee) \hookrightarrow \bP^\vee$, 
whose images we denote by $\Gr_i^\vee \subset \bP^\vee$. 
As the notation suggests, $\Gr_i^{\vee}$ is the projective dual of $\Gr_i$ 
(see Remark~\ref{remark-projective-dual}). 
We consider the intersection 
\begin{equation}
\label{Y}
Y = \Gr_1^\vee \cap \Gr_2^\vee \subset \bP^\vee. 
\end{equation}
If $X$ is a smooth threefold, then $Y$ is too (Lemma~\ref{X-Y-smooth}).  
In this case, $X$ and $Y$ are thus smooth deformation equivalent Calabi--Yau threefolds, 
which we call \emph{{\GPK} double mirrors}. 
This terminology is justified by the following result, 
which should be thought of as saying $X$ and $Y$ ``have the same mirror''. 

\begin{theorem}[{\cite[Theorem 6.3]{categorical-joins}}]
\label{theorem-equivalence}
If $X$ and $Y$ are of the expected dimension $3$ (but possibly singular), 
then there is an equivalence 
\begin{equation*}
\Db(X) \simeq \Db(Y)
\end{equation*}
of bounded derived categories of coherent sheaves. 
\end{theorem}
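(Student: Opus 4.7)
The plan is to deduce the equivalence from homological projective duality (HPD), combined with the formalism of categorical joins developed by Kuznetsov and Perry. The starting point is that the Plücker-embedded Grassmannian $\Gr(2,V) \subset \bP(\wedge^2 V)$ carries a rectangular Lefschetz decomposition of $\Db(\Gr(2,V))$ (obtained from Kapranov's exceptional collection with blocks $\langle \cO, \cU^\vee\rangle$ and length five), and its HPD is a Lefschetz category whose underlying variety is the projectively dual Grassmannian $\Gr(2,V^\vee) \subset \bP(\wedge^2 V^\vee)$. Using the isomorphisms $\phi_i$, this yields two Lefschetz categories over $\bP$ with underlying varieties $\Gr_1$ and $\Gr_2$, whose HPDs live over $\bP^\vee$ with underlying varieties $\Gr_1^\vee$ and $\Gr_2^\vee$.

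The geometric observation driving the proof is that $X$ is not a linear section of any single $\Gr_i$, but it is a linear section of the classical join $J(\Gr_1, \Gr_2) \subset \bP(W \oplus W)$ cut out by the antidiagonal $\bP(W) \hookrightarrow \bP(W \oplus W)$; symmetrically, $Y$ is the corresponding section of $J(\Gr_1^\vee, \Gr_2^\vee)$. To exploit this categorically, one forms the \emph{categorical join} $\mathcal{J}(\Gr_1, \Gr_2)$ of Kuznetsov-Perry, a Lefschetz category over $\bP(W \oplus W)$ whose antidiagonal base change is equivalent to $\Db(X)$ under the expected-dimension hypothesis on $X$. The central input is then the nonlinear HPD theorem of Kuznetsov-Perry, which identifies the HPD of $\mathcal{J}(\Gr_1, \Gr_2)$ with $\mathcal{J}(\Gr_1^\vee, \Gr_2^\vee)$.

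Combining these, HPD applied to the pair $\bigl(\mathcal{J}(\Gr_1,\Gr_2),\, \mathcal{J}(\Gr_1^\vee, \Gr_2^\vee)\bigr)$ along the orthogonal antidiagonal linear subspaces yields an equivalence of the primitive HPD components associated to $X$ and $Y$. The concluding bookkeeping exploits the rectangularity of the Lefschetz decomposition of $\Gr(2,V)$ together with the Calabi--Yau threefold numerics: the lengths of the Lefschetz components are exactly matched so that no non-primitive summands survive in $\Db(X)$ or $\Db(Y)$, and the primitive equivalence therefore extends to a full equivalence $\Db(X) \simeq \Db(Y)$. I expect the main obstacle to be two-fold: first, verifying that the categorical join construction behaves well enough in the possibly singular case to identify its antidiagonal base change with $\Db(X)$ (rather than with a noncommutative resolution of $X$); and second, tracking the Lefschetz bookkeeping through the nonlinear HPD theorem carefully enough to rule out all residual components on both sides simultaneously.
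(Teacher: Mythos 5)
Your proposal is correct and reconstructs essentially the argument of Kuznetsov--Perry's \cite[Theorem~6.3]{categorical-joins}, which is the paper's source for this statement (the paper itself cites it without giving a proof): view $X$ and $Y$ as antidiagonal linear sections of the classical joins $J(\Gr_1,\Gr_2)\subset\bP(W\oplus W)$ and $J(\Gr_1^\vee,\Gr_2^\vee)\subset\bP(W^\vee\oplus W^\vee)$, replace these by the categorical joins (which inherit Lefschetz structure from the rectangular length-$5$ decomposition of $\Gr(2,5)$ and its self-HPD), and apply the nonlinear HPD theorem for categorical joins. The numerics are exactly as you say: the categorical join has Lefschetz length $5+5=10$, matching the codimension of the antidiagonal $\bP(W)\subset\bP(W\oplus W)$, so one is in the Calabi--Yau (critical) case and the HPD equivalence between primitive parts is an equivalence of the full derived categories $\Db(X)\simeq\Db(Y)$.
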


Our main result says that, nonetheless, $X$ and $Y$ are typically not birational. 

\begin{theorem}
\label{theorem-not-birational}
For generic isomorphisms $\phi_i$, 
the varieties $X$ and $Y$ are not birational. 
\end{theorem}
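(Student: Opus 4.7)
The plan is to reduce Theorem~\ref{theorem-not-birational} to a non-isomorphism statement, encode any putative isomorphism $X \cong Y$ as a projective-linear map between the two ambient $\bP^9$'s respecting the Grassmannian structure, and then use a moduli-theoretic analysis to rule out such a map for generic parameters. Since $h^{1,1}(X) = 1$, $\Pic(X)$ is generated by the hyperplane class $H_X$, and similarly for $Y$. By Kawamata's theorem every birational map between smooth projective Calabi--Yau threefolds decomposes into flops, but a threefold of Picard rank one admits no flopping contractions; consequently any birational equivalence is biregular, and it suffices to show $X \not\cong Y$ for generic $\phi_i$.

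Suppose $f \colon X \xrightarrow{\sim} Y$. Then $f$ identifies the ample generators $H_X$ and $H_Y$ of the two Picard groups, and provided $H^0(X, \cO_X(1)) = W^\vee$ and similarly for $Y$ (which follows from the Koszul resolution of $X$ in $\Gr_1$), $f$ extends to a projective-linear isomorphism $\ell \colon \bP(W) \xrightarrow{\sim} \bP(W^\vee)$ with $\ell|_X = f$. The essential geometric step is then a \emph{Grassmannian rigidity} statement: for generic $\phi_i$, the only Pl\"ucker-embedded copies of $\Gr(2,5)$ in $\bP(W)$ that contain $X$ are $\Gr_1$ and $\Gr_2$, and similarly for $Y$. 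Granted this, $\ell$ sends the unordered pair $\{\Gr_1, \Gr_2\}$ bijectively to $\{\Gr_1^\vee, \Gr_2^\vee\}$, so it is determined by two elements of $\Aut(\Gr(2,5)) = \PGL(V) \rtimes \bZ/2$ together with a choice of pairing.

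The existence of such $\ell$ thus translates into a closed algebraic constraint on $(\phi_1, \phi_2)$ modulo the natural $\PGL(V) \times \PGL(V) \times \PGL(W)$-equivalence. I would conclude by showing that this constraint defines a proper closed substack of the moduli stack $\cM$ of \GPK\ threefolds, for instance via a tangent-space calculation at a generic point showing that the ``double-mirror'' involution $(\phi_1, \phi_2) \mapsto ((\phi_1^{-1})^*, (\phi_2^{-1})^*)$ acts non-trivially on the deformation space of the underlying Calabi--Yau threefold (of dimension $h^{1,2}(X) = 51 = \dim \cM$). The main obstacle I anticipate is the Grassmannian rigidity step: ruling out the possibility of a positive-dimensional family of Grassmannians through a generic $X$ requires a delicate analysis of the relative Hilbert scheme of such Grassmannians over the moduli stack, consistent with the paper's remark that the proof rests on a detailed study of this stack.
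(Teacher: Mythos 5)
Your overall strategy matches the paper's: reduce birationality to isomorphism via Picard rank one, establish that the two Grassmannians containing a {\GPK} threefold are unique (Proposition~\ref{proposition-unique-Gr} in the paper), package everything into the moduli stack of {\GPK} data together with the double mirror involution $\tau$, and conclude by an infinitesimal argument. The preliminary reductions and the ``Grassmannian rigidity'' input are correct, and you rightly flag rigidity as the delicate geometric step.

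However, your final step has a genuine gap, and the phrase ``at a generic point'' signals a conceptual confusion that needs addressing. The double mirror involution $\tau$ does not fix a generic point of $\cN$ (that is precisely what is being proved), so $\rd_s\tau$ at a generic $s$ is a map $\rT_s\cN \to \rT_{\tau(s)}\cN$ between \emph{different} tangent spaces, and ``acts non-trivially'' has no meaning there. One must instead work at a \emph{fixed point} $s = \tau(s)$, and such fixed points are special, typically carrying nontrivial automorphisms. At a fixed point, showing $\rd_s\tau \neq \id$ on $\rT_s\cN$ is not sufficient: if $\pi_\cN \circ \tau = \pi_\cN$ then by a Luna étale slice argument (Lemma~\ref{lemma-fixed-point-tangent}) the derivative $\rd_s\tau$ must lie in the image of the homomorphism $\Aut_{\cN}(s) \to \GL(\rT_s\cN)$ --- it could be realized by an automorphism of the threefold $X_s$ even while being $\neq \id$. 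So the actual obstruction to rule out is that $\rd_s\tau$ equals $\gamma_*$ for some $\gamma \in \Aut_\cN(s)$. The paper does this by a trace calculation: it shows (Proposition~\ref{proposition-tr-involution}) that any involution in the image of $\Aut_\cN(s) \to \GL(\rT_s\cN)$ has trace in the finite set $\set{51,3,1,-3,-5,-13,-15,-35}$, then exhibits a fixed point with $\tr(\rd_s\tau) = -1$ (Lemma~\ref{lemma-trace-tau}). Your outline contains none of this. Finally, you also omit the nontrivial existence statement that there \emph{is} a $\tau$-fixed point $s$ with smooth $X_s$ (Lemma~\ref{lemma-fixed-point}); the paper establishes this by a reduction modulo a prime and an explicit computer verification over $\bF_{103}$, and without some such argument the whole infinitesimal strategy has no place to run.
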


\begin{remark}
John Ottem and J{\o}rgen Rennemo \cite{jj-torelli} have also 
independently proved Theorem~\ref{theorem-not-birational}. 
\end{remark}

Since $X$ and $Y$ have Picard number $1$, the conclusion of 
Theorem~\ref{theorem-not-birational} is equivalent to $X$ and $Y$ being non-isomorphic. 
We prove this by an infinitesimal argument, summarized at the 
end of \S\ref{subsection-geometry-KCY} below. 

Generic {\GPK} double mirrors appear to give the first example of 
deformation equivalent, derived equivalent, but non-birational Calabi--Yau threefolds. 
We note that there are several previously known examples of 
derived equivalent but non-birational Calabi--Yau threefolds: 
the Pfaffian--Grassmannian pair \cite{pfaffian-grassmannian, kuznetsov-HPD-lines}, 
the Gross--Popescu pair \cite{bak, schnell}, 
the Reye congruence and double quintic symmetroid pair \cite{reye-pair}, 
and the $G_2$-Grassmannian pair \cite{kuznetsov-G2-grassmannian}. 
In these examples, the varieties in question are 
not deformation equivalent and are easily seen to be non-birational.  

\subsection{The birational Torelli problem}
One of our motivations for proving Theorem~\ref{theorem-not-birational} was 
the birational Torelli problem for Calabi--Yau threefolds, which asks the following. 

\begin{question}
\label{question-birational-torelli}
If $M_1$ and $M_2$ are smooth deformation equivalent 
complex Calabi--Yau threefolds such that there is an isomorphism 
$\rH^3(M_1,\bZ)_{\mathrm{\torfree}} \cong \rH^3(M_2,\bZ)_{\torfree}$
of polarized Hodge structures, then are $M_1$ and $M_2$ birational? 
\end{question}

Here, for an abelian group $A$, we write $A_{\torfree}$ for the quotient 
by its torsion subgroup. 
As observed in~\cite[Page 857, footnote]{addington}, 
if $M_1$ and $M_2$ are derived equivalent Calabi--Yau threefolds,  
then there is an isomorphism $\rH^3(M_1,\bZ)_{\torfree} \cong \rH^3(M_2,\bZ)_{\torfree}$ 
 of polarized Hodge structures.  
(See also \cite[Proposition 3.1]{caldararu-hodge}  
where up to inverting $2$ such an isomorphism is shown.) 
In particular, any pair of deformation equivalent, derived equivalent, but non-birational 
complex Calabi--Yau threefolds 
gives a negative answer to Question~\ref{question-birational-torelli}. 
Hence together Theorems~\ref{theorem-equivalence} and~\ref{theorem-not-birational} give 
the following. 

\begin{corollary}
\label{corollary-birational-torelli}
Generic complex {\GPK} double mirrors give a counterexample to the 
birational Torelli problem for Calabi--Yau threefolds. 
\end{corollary}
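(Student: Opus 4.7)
The plan is a straightforward assembly: verify that for generic $\phi_i$ the pair $(X,Y)$ simultaneously satisfies the three hypotheses of Question~\ref{question-birational-torelli} while violating its conclusion.

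First I would fix $\phi_1,\phi_2$ in the dense open locus for which $X$ and $Y$ are both smooth Calabi--Yau threefolds of the expected dimension $3$; such $\phi_i$ exist by the smoothness assertion recorded above (Lemma~\ref{X-Y-smooth}). For any such pair, $X$ and $Y$ sit as smooth fibers of the universal family over the parameter space of pairs $(\phi_1,\phi_2)$, so they are deformation equivalent smooth complex Calabi--Yau threefolds -- this is exactly the content of the term ``\GPK\ double mirrors'' introduced just before Theorem~\ref{theorem-equivalence}.

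Next I would invoke Theorem~\ref{theorem-equivalence} to obtain an equivalence $\Db(X)\simeq\Db(Y)$. As recalled in the paragraph following Question~\ref{question-birational-torelli}, a derived equivalence between smooth complex Calabi--Yau threefolds induces an isomorphism $\rH^3(X,\bZ)_{\torfree}\cong\rH^3(Y,\bZ)_{\torfree}$ of polarized Hodge structures, by the footnote on page~857 of~\cite{addington} (or~\cite[Proposition 3.1]{caldararu-hodge} after inverting $2$). Thus $(X,Y)$ verifies all the hypotheses of Question~\ref{question-birational-torelli}.

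Finally, Theorem~\ref{theorem-not-birational} asserts that for generic $\phi_i$ the varieties $X$ and $Y$ fail to be birational. Since the smoothness/expected-dimension condition and the non-birationality condition are both nonempty open conditions on the irreducible parameter space of pairs $(\phi_1,\phi_2)$, their intersection is nonempty (indeed dense), and for any $(\phi_1,\phi_2)$ in this intersection the pair $(X,Y)$ is a counterexample to Question~\ref{question-birational-torelli}. The only nontrivial ingredient is Theorem~\ref{theorem-not-birational} itself, proved in the body of the paper; once that theorem is in hand, the corollary is a purely formal consequence of the results collected above, so there is essentially no obstacle at this stage of the argument.
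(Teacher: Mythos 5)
Your argument is exactly the one the paper gives (implicitly, in the two paragraphs preceding the corollary): the corollary is a formal consequence of Theorem~\ref{theorem-equivalence}, Theorem~\ref{theorem-not-birational}, and the cited fact from~\cite{addington} that derived equivalence of Calabi--Yau threefolds yields an isomorphism of polarized Hodge structures on torsion-free~$\rH^3$. Your assembly is correct, and your extra care about the intersection of two dense open conditions in the parameter space is a minor but sound clarification.
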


Previously, Szendr{\H o}i~\cite{szendroi-torelli} showed the usual Torelli problem fails for 
Calabi--Yau threefolds, i.e. the answer to Question~\ref{question-birational-torelli} is negative if 
``birational'' is replaced with ``isomorphic''. 
As far as we know, the birational version was open until now. 
For earlier work on this problem, see~\cite{szendroi-birational-torelli, caldararu-hodge}. 

\subsection{The Grothendieck ring of varieties}
A second motivation for this work was the problem of producing nonzero classes in the Grothendieck 
ring $\rK_0(\Var/k)$ of $k$-varieties which are annihilated by a power of the class 
$\bL = [\bA^1]$ of the affine line. 
Recall that $\rK_0(\Var/k)$ is defined as the free abelian group on 
isomorphism classes $[Z]$ of algebraic varieties $Z$ over $k$ modulo the relations 
\begin{equation*}
[Z] = [U] + [Z \setminus U] \quad \text{for all open subvarieties $U \subset Z$}, 
\end{equation*} 
with product induced by products of varieties. 
In~\cite{borisov-zero-divisor} the first author used the Pfaffian--Grassmannian pair   
of Calabi--Yau $3$-folds to show that $\bL$ is a zero divisor in $\rK_0(\Var/k)$. 
This sparked a flurry of results, which 
show that for a number of pairs of derived equivalent Calabi--Yau varieties  
$(M_1, M_2)$, we have $[M_1] \neq [M_2]$ but $([M_1] - [M_2])\bL^r = 0$ for 
some positive integer $r$. 
Namely, this holds for the Pfaffian--Grassmannian pair~\cite{martin} 
(refining~\cite{borisov-zero-divisor}), 
the $G_2$-Grassmannian pair~\cite{G2-grassmannian-zero-divisor}, 
certain pairs of degree $12$ K3 surfaces~\cite{hassett-degree-12, ueda-degree-12}, 
and certain pairs $(M_1,M_2)$ where $M_1$ is a degree $8$ K3 surface and 
$M_2$ a degree $2$ K3 surface~\cite{kuznetsov-L-equivalence}. 
We prove that {\GPK} double mirrors give another such example. 

\begin{theorem}
\label{theorem-L-equivalence}
If $X$ and $Y$ are {\GPK} double mirrors, then 
\begin{equation*}
([X]-[Y])\bL^4 = 0. 
\end{equation*}
Moreover, if the isomorphisms $\phi_i$ defining $X$ and $Y$ are generic, then $[X] \neq [Y]$. 
\end{theorem}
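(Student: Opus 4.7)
The statement is the conjunction of two assertions: the $\bL$-equivalence $([X]-[Y])\bL^4 = 0$, and the non-triviality $[X] \neq [Y]$ for generic $\phi_i$. I would handle these two parts separately.

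For the $\bL$-equivalence, my plan is to produce an explicit identity $[X]\bL^4 = [Y]\bL^4$ in $\rK_0(\Var/k)$ by constructing a common geometric model. The natural input is the incidence variety
\begin{equation*}
\cI = \set{(p,H) \in \bP \times \bPv \st p \in H},
\end{equation*}
which is a $\bP^8$-bundle over each factor, together with the loci cut out by $\Gr_1, \Gr_2 \subset \bP$ and $\Gr_1^\vee, \Gr_2^\vee \subset \bPv$. Exploiting that $\Gr_i^\vee$ is literally the projective dual of $\Gr_i$, fibers of various projections of such loci admit interpretations directly in terms of $X$ and $Y$. The plan is then to carry out a discriminant-stratification and repeatedly apply the scissor relation $[Z] = [U] + [Z \setminus U]$ so that both $[X] \bL^4$ and $[Y] \bL^4$ reduce to the class of a common auxiliary variety; the exponent $4$ would arise from a $\bP^4$-bundle naturally appearing in the construction (for instance a fibration of hyperplanes containing a fixed linear subspace). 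This geometric-combinatorial construction is the main obstacle: identifying the correct incidence variety and tracking the jumps in fiber type across discriminant loci is delicate.

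For the non-equality $[X] \neq [Y]$ generically, I would combine Theorem~\ref{theorem-not-birational} with the theorem of Larsen--Lunts, which says that the class modulo $\bL$ in $\rK_0(\Var/k)$ is a stable birational invariant. Since $X$ and $Y$ are Calabi--Yau threefolds with trivial canonical bundle, and in particular non-uniruled, a standard MRC-fibration argument shows that stable birational equivalence of $X$ and $Y$ already implies birational equivalence: if $X \times \bP^n$ and $Y \times \bP^n$ were birational, their MRC fibrations (in both cases, the projection to the first factor) would induce a birational map $X \dashrightarrow Y$. Theorem~\ref{theorem-not-birational} rules this out generically, so $[X] \not\equiv [Y] \pmod{\bL}$, and a fortiori $[X] \neq [Y]$ in $\rK_0(\Var/k)$.
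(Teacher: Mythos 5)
Your argument for the second assertion (that $[X] \neq [Y]$ generically) is exactly the paper's: Larsen--Lunts reduces $[X] \equiv [Y] \pmod{\bL}$ to stable birationality, and since Calabi--Yau threefolds are the bases of the MRC fibrations of $X \times \bP^n$ and $Y \times \bP^n$, this forces $X$ and $Y$ birational, contradicting Theorem~\ref{theorem-not-birational}. That part is complete and correct.

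For the first assertion, however, you have sketched a strategy in the right spirit (exploit projective duality via an incidence correspondence inside $\bP \times \bPv$) but have not actually produced the proof, and you say so yourself. The gap is precisely the part you flag as ``the main obstacle.'' The missing ingredients are concrete: the paper does \emph{not} restrict the incidence divisor to a locus involving all four Grassmannians $\Gr_1, \Gr_2, \Gr_1^\vee, \Gr_2^\vee$; it restricts $\rQ \subset \bP \times \bPv$ to the single product $\Gr_1 \times \Gr_2^{\vee}$, obtaining $\rQ(\Gr_1, \Gr_2^\vee)$. The key observation is that the fiber of $p_1 \colon \rQ(\Gr_1,\Gr_2^\vee) \to \Gr_1$ over a point $x$ with corresponding skew form $x_2 = \phi_2^{-1}(x)$ is the hyperplane section $H_{x_2} \cap \Gr(2,V^\vee)$, whose class depends only on $\rk(x_2) \in \{2,4\}$, and the locus where $\rk(x_2) = 2$ inside $\Gr_1$ is exactly $X$. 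Symmetrically, the fibers of $p_2$ stratify over $Y \subset \Gr_2^\vee$. This gives two piecewise-trivial-fibration expressions for $[\rQ(\Gr_1,\Gr_2^\vee)]$, and the equality between them is the identity you want. Also, your guess that the exponent $4$ arises from a $\bP^4$-bundle is not what happens: $\bL^4$ comes from the arithmetic identity
\begin{equation*}
(\bL^2+\bL+1)(\bL^3+\bL^2+1) - (\bL^2+1)(\bL^3+\bL^2+\bL+1) = \bL^4,
\end{equation*}
the difference of the classes of the two types of hyperplane sections of $\Gr(2,5)$ (rank $2$ versus rank $4$), and these classes must themselves be computed by a further stratification (the paper's Lemma~\ref{lemma-lambda-section}). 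Without identifying the correct correspondence, the stratification of fibers by rank, and this polynomial identity, the first half of the proposal remains a plan rather than a proof.
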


The first statement is proved by studying a certain 
incidence correspondence, and 
the second statement follows from Theorem~\ref{theorem-not-birational} 
by an argument from~\cite{borisov-zero-divisor}. 
We note that Theorem~\ref{theorem-L-equivalence} verifies a case of the ``D-equivalence 
implies L-equivalence'' conjecture of~\cite{kuznetsov-L-equivalence} 
(see also \cite{ueda-degree-12}).

\subsection{Geometry of {\GPK} threefolds and their moduli} 
\label{subsection-geometry-KCY}
Along the way to Theorem~\ref{theorem-not-birational}, we 
prove a number of independently interesting results on the 
geometry of {\GPK} threefolds and their moduli. 

For $X$ a fixed {\GPK} threefold as in~\eqref{X}, 
we prove the two Grassmannians $\Gr_1$ and $\Gr_2$ 
containing $X$ are unique (Proposition~\ref{proposition-unique-Gr}), 
and use this to explicitly describe the automorphism group of $X$ 
(Lemma~\ref{lemma-aut-group}). 

In terms of moduli, we consider the open subscheme $U$ of the moduli 
space of pairs of embedded Grassmannians $\Gr_1 , \Gr_2 \subset \bP$ 
such that $X = \Gr_1 \cap \Gr_2$ is a smooth threefold. 
The group $\bZ/2 \times \PGL(W)$ acts on $U$ (with $\bZ/2$ swapping the 
Grassmannians), 
and we define the \emph{moduli stack of {\GPK} data} as the 
quotient $\cN = [(\bZ/2 \times \PGL(W)) \backslash U]$. 
Let $\cM$ be the \emph{moduli stack of {\GPK} threefolds}, defined as 
a $\PGL(W)$-quotient of an open subscheme of the appropriate Hilbert 
scheme. 
The morphism $U \to \cM$ given pointwise 
by $(\Gr_1, \Gr_2) \mapsto \Gr_1 \cap \Gr_2$ 
descends to a morphism 
$f \colon \cN \to \cM$, 
which we call the \emph{$\PGL$-parameterization of $\cM$}. 
Our main moduli-theoretic results are the following. 

\begin{theorem}
\label{theorem-moduli} 
The $\PGL$-parameterization $f \colon \cN \to \cM$ is an open immersion of 
smooth separated Deligne--Mumford stacks of finite type over $k$. 
\end{theorem}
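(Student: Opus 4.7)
The plan is to establish that each of $\cN$ and $\cM$ is a smooth, separated, Deligne--Mumford stack of finite type over $k$ of dimension $51$, to verify that $f$ is a monomorphism of stacks, and to conclude that $f$ is étale, and thus an open immersion, by matching tangent spaces.

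First I would analyze $\cN$. By classical results on Grassmannians, $\PGL(W)$ acts transitively on the locus $E$ of the Hilbert scheme parameterizing subvarieties $\Gr(2,V) \hookrightarrow \bP(W)$ induced by isomorphisms $\wedge^2 V \cong W$, with stabilizer $\PGL(V)$. Hence $E$ is a smooth homogeneous space of dimension $99 - 24 = 75$, and $U$, being an open subscheme of $E \times E$, is smooth of dimension $150$. The $\bZ/2 \times \PGL(W)$-stabilizer of any $(\Gr_1, \Gr_2) \in U$ is, up to the swap $\bZ/2$, the group of linear automorphisms of $\bP$ preserving both Grassmannians, which by Lemma~\ref{lemma-aut-group} coincides with $\Aut(X)$ for $X = \Gr_1 \cap \Gr_2$ and is finite. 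Consequently $\cN$ is a smooth separated DM stack of finite type of dimension $51$.

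Next, $\cM$ is by definition a $\PGL(W)$-quotient of an open subscheme $H$ of a Hilbert scheme of $\bP$, so is of finite type; by Lemma~\ref{lemma-aut-group} its stabilizers are finite, making $\cM$ a separated DM stack. For smoothness, the Bogomolov--Tian--Todorov theorem implies that abstract deformations of every CY threefold parameterized by $\cM$ are unobstructed; since $\rH^2(X,\cO_X) = 0$, the polarization extends along any deformation, so polarized deformations are unobstructed of dimension $h^{1,2}(X) = 51$, and this smoothness descends to $\cM$.

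Finally, I would show $f$ is a monomorphism in two steps. Injectivity on $\bar k$-points follows from Proposition~\ref{proposition-unique-Gr}: if $g \in \PGL(W)$ sends $\Gr_1 \cap \Gr_2$ to $\Gr_1' \cap \Gr_2'$, then the uniqueness of the two Grassmannians through the intersection forces $g \cdot \{\Gr_1, \Gr_2\} = \{\Gr_1', \Gr_2'\}$ as unordered pairs, which is exactly equivalence under $\bZ/2 \times \PGL(W)$. Injectivity on automorphism groups is the same statement applied to stabilizers, combined with Lemma~\ref{lemma-aut-group}. Since $\cN$ and $\cM$ are smooth DM stacks of the same dimension and $f$ is a monomorphism, the induced tangent maps are injective and hence bijective, so $f$ is étale; an étale monomorphism of DM stacks is an open immersion. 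The main obstacle is the uniqueness input from Proposition~\ref{proposition-unique-Gr}, but that is established earlier in the paper; the remaining steps are largely formal given Lemma~\ref{lemma-aut-group} and the Bogomolov--Tian--Todorov theorem.
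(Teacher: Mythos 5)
Your proposal follows a genuinely different route from the paper. Where the paper proves $f$ is \'{e}tale by an explicit computation of $\rd f$ (Proposition~\ref{proposition-derivative-f}, whose proof rests on Lemmas~\ref{lemma-dj} and~\ref{lemma-derivative-j} and ultimately on the Borel--Weil--Bott vanishings of the appendix), you propose a dimension-matching argument: both stacks are smooth of dimension $51$, $f$ is a monomorphism, so the tangent map is injective and hence bijective, so $f$ is an \'{e}tale monomorphism and thus an open immersion. This would be an attractive shortcut, but the load-bearing step is unjustified. Injectivity on isomorphism classes of geometric points together with isomorphisms of automorphism groups (the content of Lemma~\ref{lemma-f-points-auts}) does \emph{not} imply $f$ is a monomorphism: a monomorphism must be fully faithful on $T$-points for every test scheme $T$, including fat points, and what that requires beyond your two conditions is precisely injectivity of the tangent map, i.e.\ that $f$ is unramified --- which is what Proposition~\ref{proposition-derivative-f} establishes. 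You could try to replace the monomorphism claim by a Zariski-Main-Theorem argument on atlases (an injective quasi-finite dominant morphism of smooth irreducible spaces of the same dimension, in characteristic zero, is an open immersion), but that route requires knowing independently that $\dim\cM=51$. Your identification of $\cM$ with an open in the moduli of polarized deformations tacitly uses $\rH^0(X,\cO_X(1))\cong W^\vee$ and the vanishing of $\rH^1(X,\cO_X(1))$ (cf.\ Lemma~\ref{lemma-restriction-Qi}), so the cohomological computations are not in fact avoided --- they resurface in the dimension count.

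There is a second, independent gap: you write that finite stabilizers make $\cM$ ``a separated DM stack,'' but finiteness of stabilizers gives only the Deligne--Mumford property. Separatedness is a separate assertion; the paper proves it for $\cM$ by invoking Matsusaka--Mumford, and for $\cN$ by a valuative-criterion argument that reduces to separatedness of $\cM$ together with separatedness of the parameter space $G/H$. You assert separatedness of both without argument, and you need it: to pass from \'{e}tale plus injective on points to open immersion (whether via the Stacks Project criterion or via ZMT) one needs $f$, hence $\cN$, to be separated.
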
 

\begin{theorem}
\label{theorem-faithful-action}
Let $s \in \cN$ be a geometric point. 
Then the automorphism group of $s$ acts faithfully on the tangent 
space $\rT_{s} \cN$, i.e. the homomorphism 
$\Aut_{\cN}(s) \to \GL(\rT_{s}\cN)$ is injective. 
\end{theorem}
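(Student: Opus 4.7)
The plan is to combine the étale local structure of the smooth Deligne--Mumford stack $\cN$ at $s$ with the fact that the generic {\GPK} double mirror has trivial automorphism group.

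For the local structure, since $\cN$ is a smooth DM stack in characteristic zero, there exists étale-locally at $s$ a presentation $\cN \simeq [\Aut_\cN(s) \backslash N]$ with $(N, \tilde s)$ a smooth pointed scheme on which the finite group $\Aut_\cN(s)$ acts fixing $\tilde s$, and with $\rT_{\tilde s} N \cong \rT_s \cN$ as $\Aut_\cN(s)$-representations. In characteristic zero, a finite-group action on a smooth pointed scheme is formally linearizable at the fixed point by averaging. Thus if some $g \in \Aut_\cN(s)$ acts trivially on $\rT_s \cN$, then $g$ acts trivially on a formal, hence étale, neighborhood of $\tilde s$ in $N$. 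Transporting back, $g$ lies in $\Aut_\cN(s')$ for every geometric point $s'$ of a nonempty Zariski open substack of $\cN$ containing $s$.

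To derive a contradiction, it suffices to show that the generic stabilizer on $\cN$ is trivial. Since $\cN$ is irreducible---being obtained from an open subscheme of the irreducible variety $(\PGL(W)/\PGL(V))^2$ by a connected-group quotient---any nonempty open substack meets the locus of minimal stabilizer. By the uniqueness of the Grassmannian pair (Proposition~\ref{proposition-unique-Gr}), any element of $\Aut_\cN(s)$ either preserves each $\Gr_i$, hence lies in $\PGL(V_1) \cap \PGL(V_2)$, or swaps the two Grassmannians. For the diagonal symmetry: two generic conjugates of $\PGL(V) \cong \PGL_5$ inside $\PGL(W) = \PGL_{10}$ (each of dimension $24$ in a $99$-dimensional ambient) intersect trivially, so this locus has positive codimension in $\cN$. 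For the swap symmetry: parameterizing swap-symmetric pairs by $(\Gr, h)$ with $h \in \PGL(W)$ a nontrivial involution and $\Gr \neq h\Gr$, and noting that any such involution has $\PGL(W)$-centralizer of dimension at least $49$, a direct dimension count gives an upper bound of $75 + 50 - 99 = 26 < 51 = \dim \cN$.

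The main obstacle is the local linearization step: one must invoke an appropriate local structure theorem for smooth DM stacks, e.g.\ via a Luna-type étale slice adapted to the presentation $\cN = [(\bZ/2 \times \PGL(W)) \backslash U]$, or via the general framework of Alper--Hall--Rydh. Once this is in place, the remainder is dimension bookkeeping enabled by the structural results of Section~2.
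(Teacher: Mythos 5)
Your overall strategy is genuinely different from the paper's, and it is a sensible one: instead of a direct eigenvalue computation you reduce to generic triviality of the automorphism group via linearization. The linearization step is sound in principle (the paper itself uses a Luna-type slice in Lemma~\ref{lemma-fixed-point-tangent}), and the logical reduction is correct --- if a nontrivial $g \in \Aut_{\cN}(s)$ acted trivially on $\rT_{s}\cN$, a linearized étale slice at $s$ would exhibit a nontrivial stabilizer over a Zariski-open neighborhood, contradicting generic triviality. Note, though, that the paper's Corollary (generic {\GPK} threefolds have trivial automorphism group) is \emph{deduced from} Theorem~\ref{theorem-faithful-action}; so your proposal must establish generic triviality independently, and that is exactly where the argument stops being a proof.

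The gap is the pair of dimension counts. For type I, the assertion that ``two generic conjugates of $\PGL_5$ in $\PGL_{10}$ intersect trivially'' is not a consequence of $24+24-99<0$; conjugates are not linear subspaces in general position, and the set of $g$ with $H \cap gHg^{-1} \neq 1$ must be analyzed by stratifying over $G$-conjugacy classes of candidate common elements $h$, controlling both $\dim C_G(h)$ and $\dim(C_h \cap H)$, which varies considerably (consider unipotent elements with small Jordan type, where $\wedge^2$ produces conjugacy classes with unusually large centralizers in $\PGL_{10}$). You have not carried this out. For type II, you parameterize by pairs $(\Gr, h)$ with $h$ an \emph{involution}, but a type II automorphism $a$ need not a priori satisfy $a^2 = 1$: only that $a^2$ is of type I. Using the involution hypothesis therefore presupposes the type I analysis, a logical dependence your sketch leaves implicit; and the estimate $75 + 50 - 99 = 26$ also silently assumes the parameterization $(\Gr, h) \mapsto (\Gr, h\Gr)$ has generically zero-dimensional fibers and free $G$-action, neither of which is checked. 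Finally, on a smaller point, the statement ``$g$ lies in $\Aut_{\cN}(s')$ for nearby $s'$'' is not literally meaningful (different points have different automorphism groups); what is true, and what you need, is that a section of the inertia stack passes through $(s,g)$ over an open neighborhood, which is how the contradiction with generic triviality is properly phrased. By contrast, the paper's proof via Lemma~\ref{lemma-aut-action-CY} is self-contained, and the same computation yields the trace list of Proposition~\ref{proposition-tr-involution}, which is essential input for Theorem~\ref{theorem-not-birational}; so even if your dimension counts were completed, the eigenvalue analysis would still be needed elsewhere.
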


\begin{corollary}
A generic {\GPK} threefold has trivial automorphism group. 
\end{corollary}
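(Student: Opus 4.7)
The plan is to combine Theorems~\ref{theorem-moduli} and~\ref{theorem-faithful-action} with the \'etale-local structure of smooth Deligne--Mumford stacks.

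First I would reduce the problem to the moduli stack $\cN$. Since $f\colon \cN \to \cM$ is an open immersion by Theorem~\ref{theorem-moduli}, any geometric point $s \in \cN$ lying over $[X] \in \cM$ satisfies $\Aut_{\cN}(s) \cong \Aut_{\cM}([X])$, and the latter equals $\Aut(X)$ because $\cM$ is constructed as a $\PGL(W)$-quotient of an open in the relevant Hilbert scheme and every automorphism of $X$ is induced by an element of $\PGL(W)$ (this uses that $X$ has Picard rank $1$ and is embedded by its unique ample generator, as recorded in Lemma~\ref{lemma-aut-group}). Thus it suffices to exhibit a dense open substack of $\cN$ on which the automorphism group is trivial.

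Next I would invoke the local structure theorem for smooth Deligne--Mumford stacks. By Theorem~\ref{theorem-moduli}, $\cN$ is smooth, separated, and Deligne--Mumford of finite type, so it admits \'etale charts of the form $[V/G]$ with $V$ smooth and $G$ a finite group. For $v \in V$ with image $\bar s \in \cN$, the stabilizer $G_v \subseteq G$ is identified with $\Aut_{\cN}(\bar s)$, and the tangent action of $G_v$ on $T_v V$ corresponds under this identification to the tangent action on $T_{\bar s}\cN$. Theorem~\ref{theorem-faithful-action} therefore guarantees that $G_v$ acts faithfully on $T_v V$ for every $v \in V$.

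The rest is a codimension count. For each nontrivial $g \in G$ and each $v \in V^g$, faithfulness gives $(T_v V)^g \subsetneq T_v V$, so the smooth closed subscheme $V^g \subseteq V$ has strictly positive codimension at $v$. The finite union $\bigcup_{g \neq 1} V^g$ is therefore a nowhere dense closed subscheme, and its complement parameterizes points of $V$ with trivial stabilizer, hence points of $\cN$ with trivial automorphism group. Assembling this over an \'etale cover yields a dense open substack of $\cN$ whose geometric points correspond to \GPK\ threefolds with trivial automorphism group.

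I expect the main obstacle to be entirely upstream: all of the technical content is packaged in Theorems~\ref{theorem-moduli} and~\ref{theorem-faithful-action}, and given those, the remaining argument is routine. The only mildly nontrivial input in the present deduction is the identification $\Aut_{\cM}([X]) = \Aut(X)$, which rests on the Picard-rank-$1$ property of $X$ via Lemma~\ref{lemma-aut-group}.
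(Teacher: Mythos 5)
Your proof is correct and follows essentially the same route as the paper: the paper cites as ``well-known'' the fact that for an irreducible smooth Deligne--Mumford stack, the generic automorphism group is trivial if and only if automorphism groups act faithfully on tangent spaces, and you have simply unpacked the one implication needed via \'etale charts $[V/G]$ and the codimension count on fixed loci $\bigcup_{g\neq 1}V^g$. The only ingredient you should make explicit is the irreducibility of $\cN$ (immediate from its construction as a quotient of an irreducible open in $G/H\times G/H$), since without it ``dense open'' does not yield a statement about the generic point.
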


\begin{proof}
The stack $\cN$ is irreducible by construction, and smooth and Deligne--Mumford by Theorem~\ref{theorem-moduli}. 
It is well-known that in this situation, the generic point of $\cN$ has trivial automorphism 
group if and only if the automorphism groups of geometric points act faithfully on tangent 
spaces.  
\end{proof} 

The operation $(\Gr_1, \Gr_2) \mapsto (\Gr_1^{\vee}, \Gr_2^{\vee})$ 
descends to an involution $\tau \colon \cN \to \cN$, which we call the 
\emph{double mirror involution}. 
In the above terms, our proof of Theorem~\ref{theorem-not-birational} 
boils down to the following statement: 
there exists a fixed point $s \in \cN$ of $\tau$ such that the derivative 
$\rd_{s} \tau \in \GL(\rT_{s}\cN)$ is not contained 
in the image of the homomorphism 
$\Aut_{\cN}(s) \to \GL(\rT_{s}\cN)$. 
For this, we use our description of the automorphism groups of 
{\GPK} threefolds to show the traces of involutions in the image 
of $\Aut_{\cN}(s) \to \GL(\rT_{s}\cN)$ are contained in an explicit finite list 
(Proposition~\ref{proposition-tr-involution}), and then we exhibit a fixed 
point $s \in \cN$ of $\tau$ such that $\tr(\rd_{s}\tau)$ does not occur 
in this list (Lemma~\ref{lemma-trace-tau}). 
 
\subsection{Organization of the paper}
In \S\ref{section-geometry-KCYs}, we prove the results on the geometry of a 
fixed {\GPK} threefold described above. 
In \S\ref{section-moduli}, we construct the moduli stacks $\cM$ and $\cN$ 
of {\GPK} threefolds and {\GPK} data, and prove Theorem~\ref{theorem-moduli}. 
In \S\ref{section-infinitesimal-computations}, we prove our results 
on the action of automorphism groups on tangent spaces (Theorem~\ref{theorem-faithful-action} 
and Proposition~\ref{proposition-tr-involution}). 
In \S\ref{section-tau}, we show that the operation of passing to the double 
mirror preserves smoothness of {\GPK} threefolds, use this to define 
the double mirror involution $\tau$ of $\cN$, and compute the derivative of $\tau$. 
In \S\ref{section-theorem-not-birational} we prove Theorem~\ref{theorem-not-birational}. 
In \S\ref{section-L-equivalence} we prove Theorem~\ref{theorem-L-equivalence}. 
Finally, in Appendix~\ref{appendix} we gather some Borel--Weil--Bott computations 
which are used in the main body of the paper. 

\subsection{Notation} 
We work over an algebraically closed ground field $k$ of characteristic $0$. 
As above, $V$ and $W$ denote fixed $k$-vector spaces of dimensions $5$ and $10$, 
$\bP = \bP(W)$, and $\bPv = \bP(W^{\vee})$. 
We fix an isomorphism $\phi \colon \wedge^2 V \xrightarrow{\sim} W$, 
and let $\Gr \subset \bP$ denote the corresponding embedded $\Gr(2,V)$. 
Further, we set $G = \PGL(W)$ and $H = \PGL(V)$, and denote by 
$\fg$ and $\fh$ their Lie algebras; 
there are embeddings $H \to G$ and $\fh \to \fg$ 
by virtue of the isomorphism~$\phi$. 
Given a variety $Z$ with a morphism to $\bP$, we write $\cO_Z(1)$ 
for the pullback of $\cO_{\bP}(1)$. 

\subsection{Acknowledgements} 
We are grateful to Johan de Jong for very useful conversations about this work. 
We also benefited from discussions with Ron Donagi, Sasha Kuznetsov, and Daniel Litt. 
We thank Micha{\l} Kapustka for interesting comments and for informing us about 
the history of {\GPK} threefolds. 
We thank John Ottem and J{\o}rgen Rennemo for coordinating the release 
of their paper with ours.


\section{Geometry of {\GPK} threefolds} 
\label{section-geometry-KCYs}

In this section, we show that a {\GPK} threefold is contained in a unique pair of Grassmannians 
in $\bP$ (Proposition~\ref{proposition-unique-Gr}). 
The key ingredient for this is the stability of the restrictions of the normal bundles 
of these Grassmannians (Proposition~\ref{proposition-Ni-stable}). 
As a consequence, we obtain an explicit description of the automorphism groups 
of {\GPK} threefolds (Lemma~\ref{lemma-aut-group}). 

\subsection{The Grassmannians containing a {\GPK} threefold} 
Recall that if $X$ is a smooth $n$-dimensional projective variety with an ample 
divisor $H$, then the slope of a torsion free sheaf $\cE$ on $X$ is defined by 
\begin{equation*}
\mu(\cE) = \frac{c_1(\cE) \cdot H^{n-1}}{\rk(\cE)}. 
\end{equation*}
Note that $c_1(\cE)$ can be computed as the first Chern class of the line bundle 
$\det(\cE) = ((\wedge^{r} \cE)^{\vee})^{\vee}$, where $r = \rk(\cE)$.  
The sheaf $\cE$ is called \emph{slope stable} 
if for every subsheaf $\cF \subset \cE$ such that $0 < \rk(\cF) < \rk(\cE)$, we have 
\begin{equation*}
\mu(\cF) < \mu(\cE). 
\end{equation*}

If $X \subset \bP$ is a {\GPK} threefold, we set $H = c_1(\cO_X(1))$.  

\begin{proposition}
\label{proposition-Ni-stable}
Let $X = \Gr_1 \cap \Gr_2 \subset \bP$ be a {\GPK} threefold, 
and let $\rN_i = \rN_{\Gr_i/\bP}$ be the normal bundle of $\Gr_i \subset \bP$. 
Then $\rN_i \vert_X$ is slope stable. 
\end{proposition}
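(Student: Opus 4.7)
The plan is to reduce the slope stability of $\rN_i|_X$ to two explicit cohomology vanishings on $X$, and then to establish these by Borel--Weil--Bott on $\Gr_i$.

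First I would identify the normal bundle. Using the filtration on $\mathcal{T}_{\bP}|_{\Gr_i}$ induced by the inclusion $\wedge^2 \cU_i \hookrightarrow \wedge^2 V_i = W$, one obtains $\rN_i \cong \wedge^2 \cQ_i \otimes \cO_{\Gr_i}(1) \cong \cQ_i^\vee(2)$, and correspondingly $\wedge^2 \rN_i \cong \cQ_i(3)$, where $\cQ_i$ is the rank three tautological quotient on $\Gr_i$. In particular $\det \rN_i|_X = \cO_X(5)$ and $\mu(\rN_i|_X) = \tfrac{5}{3} H^3$. Since $\Pic(X) = \bZ \cdot H$, a rank one (respectively rank two) destabilizing subsheaf of $\rN_i|_X$ amounts to a nonzero section of $\cQ_i^\vee(2 - a)|_X$ with $a \geq 2$ (respectively $\cQ_i(3 - b)|_X$ with $b \geq 4$). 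Multiplication by a general section of $\cO_X(1)$ injects $\rH^0$ of any negative twist into that of the next less negative one, so it suffices to rule out the borderline cases:
\begin{equation*}
\rH^0(X, \cQ_i^\vee|_X) = 0 \quad \text{and} \quad \rH^0(X, \cQ_i(-1)|_X) = 0.
\end{equation*}

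I would prove these by restriction from $\Gr_1$; the case $i = 2$ is symmetric. Since $X = \Gr_1 \cap \Gr_2$ is a transverse intersection in $\bP$, tensoring the classical minimal free resolution
\begin{equation*}
0 \to \cO_\bP(-5) \to \cO_\bP(-3)^{\oplus 5} \to \cO_\bP(-2)^{\oplus 5} \to \cO_\bP \to \cO_{\Gr_2} \to 0
\end{equation*}
with $\cO_{\Gr_1}$ yields a locally free resolution of $\cO_X$ on $\Gr_1$. Tensoring further with $\cQ_1^\vee$ or $\cQ_1(-1)$ and applying the resulting hypercohomology spectral sequence reduces each vanishing to that of $\rH^k(\Gr_1, \cQ_1^\vee(-m_k))$, respectively $\rH^k(\Gr_1, \cQ_1(-1 - m_k))$, for $(k, m_k) \in \{(0, 0), (1, 2), (2, 3), (3, 5)\}$.

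Each of these eight cohomology groups is then handled by a direct Borel--Weil--Bott application on $\Gr(2, 5)$: after adding $\rho = (4, 3, 2, 1, 0)$ to the appropriate $\GL_5$-weight, the result either has a repeated entry (so all cohomology vanishes) or sorts to a strictly decreasing tuple via a number of transpositions different from $k$ (placing the unique nonzero cohomology in another degree). These are the same kinds of Borel--Weil--Bott computations collected in the appendix. The main obstacle is the bookkeeping check that no single weight pattern accidentally produces cohomology in the critical degree; in each case the negativity of $\cO(-m_k)$ makes this work out.
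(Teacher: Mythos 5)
Your proposal is correct and follows essentially the same route as the paper: identify $\rN_i \cong \cQ_i^\vee(2)$, translate a putative destabilizing subsheaf of rank one or two into a nonzero section of a suitable negative twist (of $\cQ_i^\vee|_X$ or $\cQ_i|_X$, which is the same as a twist of $\wedge^2\cQ_i|_X$), and kill that section by pushing the Pfaffian resolution of $\cO_X$ on $\Gr_i$ through Borel--Weil--Bott. The paper phrases this by reducing to slope stability of $\cQ_i|_X$ and proving the vanishing $\rH^0(X, \wedge^r(\cQ_i|_X)(-tH))=0$ for all $t\geq 1$ in one stroke (so the ``multiply by a section of $\cO_X(1)$'' reduction to the borderline case is unnecessary, though harmless); your two borderline vanishings $\rH^0(X,\cQ_i^\vee|_X)=0$ and $\rH^0(X,\cQ_i(-1)|_X)=0$ are exactly its $t=1$ cases, since $\cQ_i^\vee \cong \wedge^2\cQ_i(-1)$.
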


\begin{proof}
By Lemma~\ref{lemma-normal-bundle} there is an isomorphism 
$\rN_i \cong \cQ_i^\vee(2)$, where $\cQ_i$ is the tautological rank~$3$ quotient bundle. 
Hence it suffices to prove that the bundle $\cQ_i \vert_X$ is stable.
Let $\cF \subset \cQ_i \vert_X$ be a subsheaf of rank $r=1$ or $2$. 
Since $H$ generates $\Pic(X)$ (see~\cite{G-Kapustka}), we can write 
$c_1(\cF) = tH$ for some $t \in \bZ$. 
Then taking the $r$-th exterior power of the inclusion $\cF \subset \cQ_i \vert_X$ and passing to 
double duals, we get a nonzero section of $\wedge^r(\cQ_i \vert_X)(-tH)$. 
Hence $t \leq 0$ by Lemma~\ref{lemma-Ni-vanishing}. We conclude  
\begin{equation*}
\mu(\cF) = tH^3/r < H^3/3 = \mu(\cQ_i|_X). \qedhere
\end{equation*} 
\end{proof}
 
The following result shows the representation of a {\GPK} threefold as an 
intersection of two Grassmannians is unique. 

\begin{lemma}
\label{lemma-unique-Gr-intersection}
Let $X \subset \bP$ be a {\GPK} threefold.  
Assume $\phi_i \colon \wedge^2 V \xrightarrow{\sim} W$, $1 \leq i \leq 4$, 
are isomorphisms whose corresponding Grassmannian embeddings $\Gr_i \subset \bP$ satisfy 
\begin{equation*} 
X = \Gr_1 \cap \Gr_2 = \Gr_3 \cap \Gr_4. 
\end{equation*}
Then either $\Gr_1 = \Gr_3$ and $\Gr_2 = \Gr_4$, or $\Gr_1 = \Gr_4$ and $\Gr_2 = \Gr_3$.  
\end{lemma}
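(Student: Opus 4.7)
The plan is to combine the slope stability from Proposition~\ref{proposition-Ni-stable} with transversality of both intersections to force the resulting decompositions of $\rN_{X/\bP}$ to coincide as unordered pairs of subbundles, and then to upgrade this equality of normal subbundles to equality of Grassmannians via a rigidity argument along $X$.

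Since each intersection $X = \Gr_i \cap \Gr_j$ (for $\{i,j\} = \{1,2\}$ or $\{3,4\}$) has the expected codimension $6$ in $\bP$, both are transverse, yielding direct sum decompositions
$$\rN_{X/\bP} \cong \rN_1|_X \oplus \rN_2|_X \cong \rN_3|_X \oplus \rN_4|_X,$$
in which the summand $\rN_i|_X$ is identified with the normal bundle $\rN_{X/\Gr_j}$ of $X$ in the other Grassmannian of its pair. By Proposition~\ref{proposition-Ni-stable} together with $\rN_i \cong \cQ_i^\vee(2)$, each $\rN_i|_X$ is slope stable of rank $3$ with the same slope and determinant $\cO_X(5)$. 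Any nonzero morphism between two of them is thus an isomorphism: the standard slope argument gives injectivity with torsion cokernel, and the induced map of determinants, being a nonzero section of $\cO_X$, is a unit.

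Apply this to the four compositions
$$f_{ij}\colon \rN_j|_X \hookrightarrow \rN_{X/\bP} \twoheadrightarrow \rN_i|_X \qquad \text{for } (i,j) \in \{1,2\}\times\{3,4\},$$
where the inclusion comes from the second decomposition and the projection from the first. Each $f_{ij}$ is either zero or an isomorphism, and for each $j$ the pair $f_{1,j}, f_{2,j}$ cannot both vanish (by injectivity of the inclusion). A case analysis then yields, after possibly swapping $\Gr_3 \leftrightarrow \Gr_4$, the subbundle equalities $\rN_3|_X = \rN_1|_X$ and $\rN_4|_X = \rN_2|_X$ inside $\rN_{X/\bP}$. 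The delicate point is to rule out the ``graph'' configuration in which all four $f_{ij}$ are isomorphisms; this uses the specific Grassmannian structure of the embeddings beyond mere stability, for instance by establishing the vanishing $\rH^0(X, \rN_1|_X^\vee \otimes \rN_2|_X) = 0$, which precludes any isomorphism $\rN_1|_X \xrightarrow{\sim} \rN_2|_X$ and hence any graph subbundle.

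To conclude, subbundle equality translates, via the exact sequence $0 \to \rT_X \to \rT_{\Gr_k}|_X \to \rN_{X/\Gr_k} \to 0$, into tangent-bundle equality $\rT_{\Gr_k}|_X = \rT_{\Gr_\ell}|_X$ inside $\rT_\bP|_X$ for the suitably paired indices. The Zariski tangent space of $\Gr_k \cap \Gr_\ell$ at $x \in X$ then equals $\rT_{\Gr_k}|_x$, so the ideal of $\Gr_k \cap \Gr_\ell$ in $\Gr_k$ sits inside the square of the ideal of $X$ in $\Gr_k$. The rigidity statement $\rH^0(\Gr_k, \rN_k \otimes \cI_{X/\Gr_k}) = 0$, verifiable by a Borel--Weil--Bott computation, then forces $\Gr_k = \Gr_\ell$, completing the proof. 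The principal obstacle is ruling out the graph configuration in the case analysis; once that is done, the remaining rigidity step is largely a cohomology computation.
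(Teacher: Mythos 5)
Your proposal diverges from the paper's argument and, at exactly the two points you flag as delicate, leaves genuine gaps. First, you aim for \emph{equality of subbundles} $\rN_3\vert_X = \rN_1\vert_X$ inside $\rN_{X/\bP}$ rather than a mere isomorphism of abstract bundles, which is what forces you to exclude the ``graph'' configuration. The proposed remedy, the vanishing $\rH^0(X, \rN_1\vert_X^\vee \otimes \rN_2\vert_X)=0$, is left unproved and is not a routine Borel--Weil--Bott computation: since $\rN_1\vert_X^\vee \otimes \rN_2\vert_X \cong \cQ_1\vert_X \otimes \cQ_2^\vee\vert_X$ mixes bundles native to two different Grassmannian structures on $\bP$, it does not fit the pattern of the appendix computations (the vanishing is in fact true, but the natural route to it goes \emph{through} the lemma you are trying to prove). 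Second, the rigidity step is incomplete: granting $\rT_{\Gr_k}\vert_X = \rT_{\Gr_\ell}\vert_X$ inside $\rT_{\bP}\vert_X$ and even $\cI_{\Gr_k\cap\Gr_\ell/\Gr_k}\subset\cI^2_{X/\Gr_k}$, the vanishing $\rH^0(\Gr_k,\rN_k\otimes\cI_{X/\Gr_k})=0$ (which does hold --- it is shown in the proof of Lemma~\ref{lemma-restriction-Ni}) is an infinitesimal statement about first-order deformations of $\Gr_k$ fixing $X$. You do not explain how it controls the \emph{global} subvariety $\Gr_\ell$, which is not an infinitesimal deformation of $\Gr_k$; some additional argument is required to pass from tangency along $X$ to equality.

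The paper's proof sidesteps both issues by working only with isomorphism classes. Stability (Proposition~\ref{proposition-Ni-stable}) plus equal slope shows that the two decompositions of $\rN_{X/\bP}$ agree as unordered pairs of \emph{isomorphism classes}; in the ``graph'' configuration all four summands are abstractly isomorphic, which is harmless at this stage. The key observation your proposal is missing is that the isomorphism class of $\rN_i\vert_X$ already determines $\Gr_i\subset\bP$: via $(\rN_i\vert_X)^\vee(2)\cong\cQ_i\vert_X$ and the isomorphisms $V\cong\rH^0(X,\cQ_i\vert_X)$ and $W^\vee\cong\rH^0(X,\cO_X(1))$ from Lemma~\ref{lemma-restriction-Qi}, taking $\wedge^3$ of the evaluation map $V\otimes\cO_X\to\cQ_i\vert_X$ recovers $\phi_i$ up to $\GL(V)$, and hence the Grassmannian. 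This direct reconstruction replaces both of your problematic steps with a single clean one.
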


\begin{proof}
As above, let $\rN_i = \rN_{\Gr_i/\bP}$. 
The restrictions $\rN_i\vert_X$ all have the same slope, and are stable by Proposition~\ref{proposition-Ni-stable}. 
Hence any morphism $\rN_i\vert_X \to \rN_j\vert_X$ is either zero or an isomorphism. 
Considering the inclusion $\rN_i \vert_X \subset \rN_{X/\bP}$, $i=1,2$, followed  
by projection onto the summands of the decomposition $\rN_{X/\bP} \cong \rN_3\vert_X \oplus \rN_4\vert_X$, 
we conclude that either $\rN_1\vert_X \cong \rN_3 \vert_X$ and $\rN_2 \vert_X \cong \rN_4 \vert_X$, 
or $\rN_1 \vert_X \cong \rN_4 \vert_X$ and $\rN_2 \vert_X \cong \rN_3 \vert_X$. 
Hence to finish, it suffices to show that the isomorphism class of $\rN_i|_X$ determines 
$\Gr_i \subset \bP$. 

By Lemma~\ref{lemma-normal-bundle}, $\rN_i\vert_X$ determines the restriction $\cQ_i\vert_X$ 
of the tautological rank $3$ quotient bundle via $(\rN_i\vert_X)^\vee(2) \cong \cQ_i\vert_X$. 
The inclusion $\Gr_i \subset \bP$ is determined by $\cQ_i$ as follows: $V \cong \rH^0(\Gr_i, \cQ_i)$,  
taking the third exterior power induces an isomorphism $\wedge^3 V \cong W^\vee$ 
(note that $\wedge^3 \cQ_i = \cO(1)$), and the dual isomorphism $\wedge^3V^\vee \cong W$ is identified 
with $\phi_i$ under the isomorphism $\wedge^2 V \cong \wedge^3 V^\vee$. 
But the restriction maps $V \cong \rH^0(\Gr_i, \cQ_i) \to \rH^0(X, \cQ_i\vert_X)$ and 
$W^\vee \cong \rH^0(\bP, \cO_\bP(1)) \to \rH^0(X, \cO_X(1))$ are isomorphisms 
by Lemma~\ref{lemma-restriction-Qi}. 
The isomorphism class of $\rN_i\vert_X$ thus determines 
$\phi_i \colon \wedge^2 V \xrightarrow{\sim} W$ 
up to an isomorphism of $V$, and hence determines the Grassmannian $\Gr_i$. 
\end{proof}

As a slight strengthening of Lemma~\ref{lemma-unique-Gr-intersection}, we prove 
that a {\GPK} threefold is contained in a unique pair of Grassmannians. 
We note, however, that Lemma~\ref{lemma-unique-Gr-intersection}  
already suffices for our purposes in this paper. 

\begin{proposition}
\label{proposition-unique-Gr}
Let $X = \Gr_1 \cap \Gr_2 \subset \bP$ be a {\GPK} threefold.  
Let $\Gr_3 \subset \bP$ be the image of an embedding $\Gr(2,V) \hookrightarrow \bP$ 
given by an isomorphism $\phi_3 \colon \wedge^2 V \xrightarrow{\sim} W$. 
If $X \subset \Gr_3$, then either $\Gr_3 = \Gr_1$ or $\Gr_3 = \Gr_2$. 
\end{proposition}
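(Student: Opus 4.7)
The plan is to reduce to Lemma~\ref{lemma-unique-Gr-intersection} by showing that if $\Gr_3 \supset X$ is a Grassmannian embedding, then the restriction of the normal bundle $\rN_{\Gr_3/\bP}|_X$ is isomorphic to one of $\rN_i|_X$, $i = 1,2$. From this the conclusion $\Gr_3 \in \{\Gr_1,\Gr_2\}$ will follow by the same recovery argument as in the second half of the proof of Lemma~\ref{lemma-unique-Gr-intersection}.

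Set $\rN' = \rN_{X/\Gr_3}$, which is locally free of rank $3$ since $X \subset \Gr_3$ is a regular embedding of smooth varieties. From the conormal sequence
\[ 0 \to \rN' \to \rN_{X/\bP} \to \rN_{\Gr_3/\bP}|_X \to 0, \]
using $K_X = 0$ to get $c_1(\rN_{X/\bP}) = 10H$ and Lemma~\ref{lemma-normal-bundle} to get $c_1(\rN_{\Gr_3/\bP}|_X) = 5H$, we compute $c_1(\rN') = 5H$. So $\rN'$ has the same rank and slope $\mu = 5H^3/3$ as each $\rN_i|_X$.

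The key step is to identify $\rN'$ as a summand of $\rN_{X/\bP} \cong \rN_1|_X \oplus \rN_2|_X$. Both summands are stable of slope $\mu$ by Proposition~\ref{proposition-Ni-stable}, and they are non-isomorphic, since otherwise the final paragraph of the proof of Lemma~\ref{lemma-unique-Gr-intersection} would give $\Gr_1 = \Gr_2$, incompatible with $\dim X = 3$. Let $p_j \colon \rN' \to \rN_j|_X$ denote the two projections, and suppose both are nonzero. A slope computation forces each to be injective: if $\ker(p_1)$ were nontrivial it would sit inside $\rN_2|_X$, so stability of $\rN_2|_X$ would give $\mu(\ker(p_1)) < \mu$, and the sequence $0 \to \ker(p_1) \to \rN' \to \image(p_1) \to 0$ with $\mu(\rN')=\mu$ would force $\mu(\image(p_1)) > \mu$, contradicting the stability of $\rN_1|_X$. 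Once each $p_j$ is injective, its image is a full-rank subsheaf of $\rN_j|_X$ with the same $c_1$, hence equal to $\rN_j|_X$ by reflexivity; so $p_j$ is an isomorphism. But then $\rN'$ is the graph of $p_2 \circ p_1^{-1} \colon \rN_1|_X \to \rN_2|_X$, which vanishes because these are stable non-isomorphic bundles of equal slope, contradicting $p_2 \neq 0$. Hence exactly one $p_j$ vanishes, $\rN'$ coincides as a subsheaf with the other summand, and consequently $\rN_{\Gr_3/\bP}|_X \cong \rN_i|_X$ for some $i \in \{1,2\}$.

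To finish, I run the recovery argument from Lemma~\ref{lemma-unique-Gr-intersection}: by Lemma~\ref{lemma-normal-bundle} we obtain $\cQ_3|_X \cong \cQ_i|_X$. The restriction map $\rH^0(\Gr_3,\cQ_3) \to \rH^0(X,\cQ_3|_X)$ is a linear map between two $5$-dimensional spaces, and it is injective: a nonzero section in its kernel would exhibit $X$ inside the linear $\bP^3 \subset \Gr_3 \subset \bP$ parameterizing $2$-subspaces containing a fixed vector, contradicting the linear non-degeneracy of $X \subset \bP$, which follows from the isomorphism $W^\vee \xrightarrow{\sim} \rH^0(X,\cO_X(1))$ supplied by Lemma~\ref{lemma-restriction-Qi}. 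So the restriction is an isomorphism, and the recovery procedure in the proof of Lemma~\ref{lemma-unique-Gr-intersection} yields $\phi_3 = \phi_i$ up to an element of $\Aut(V)$, hence $\Gr_3 = \Gr_i$ as subsets of $\bP$. The main obstacle is the slope/stability analysis of $\rN'$ as a subsheaf of the polystable bundle $\rN_{X/\bP}$; once that structural fact is in place, the rest of the argument is essentially formal.
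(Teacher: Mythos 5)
Your proof is correct, but it takes a genuinely different route from the paper's. The paper dualizes the conormal sequence to get an injection $(\rN_3|_X)^\vee \hookrightarrow (\rN_1|_X)^\vee \oplus (\rN_2|_X)^\vee$, shows by a slope argument that one of the projections is an isomorphism, interprets this as saying that $\Gr_2$ and $\Gr_3$ (say) intersect transversally along $X$, and then proves $X = \Gr_2 \cap \Gr_3$ by a degree/dimension argument (using Lemma~\ref{lemma-class-X-Gr} that $[X] = 5H^3$ to exclude extraneous components of dimension $\geq 4$). This lets the paper apply Lemma~\ref{lemma-unique-Gr-intersection} directly. You instead work with $\rN' = \rN_{X/\Gr_3}$ as a subsheaf of $\rN_{X/\bP}$ and push the slope analysis further (the graph argument, using $\Hom(\rN_1|_X, \rN_2|_X) = 0$) to conclude the stronger statement that $\rN'$ \emph{is} one of the summands, hence $\rN_{\Gr_3/\bP}|_X \cong \rN_i|_X$. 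You then need to re-run the recovery of $\phi_3$ from $\cQ_3|_X$ and its global sections. Here the crucial point is that Lemma~\ref{lemma-restriction-Qi} is proven using the Pfaffian resolution of $\cI_{X/\Gr_i}$, which is available a priori only for $i = 1, 2$; your fix (injectivity of $\rH^0(\Gr_3,\cQ_3) \to \rH^0(X,\cQ_3|_X)$ via linear non-degeneracy of $X \subset \bP$ and the fact that zero loci of nonzero sections of $\cQ_3$ are linear $\bP^3$'s) is correct and gets around this. In sum: the paper trades a finer homological analysis for an intersection-theoretic reduction to the already-proved Lemma~\ref{lemma-unique-Gr-intersection}; you trade the reverse. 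Both are valid.

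One small imprecision: the step ``full-rank subsheaf with the same $c_1$, hence equal by reflexivity'' is not quite right as stated, since a subsheaf of a locally free sheaf of full rank and equal $c_1$ can still have nontrivial torsion quotient supported in codimension $\geq 2$. The correct argument is via determinants: $p_j$ injective implies $\det(p_j)$ is a nonzero global section of $\det(\rN_j|_X) \otimes \det(\rN')^{-1} \cong \cO_X$, hence a nonzero constant, hence $p_j$ is an isomorphism everywhere. Alternatively, once you know $p_2 = 0$ (say), $\rN' \subset \rN_1|_X$ with the quotient $\rN_1|_X/\rN'$ injecting into the torsion-free sheaf $\rN_3|_X$, so it vanishes for rank reasons, which gives $\rN' = \rN_1|_X$ without needing the determinant trick.
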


\begin{proof}
Let $\rN_i = \rN_{\Gr_i/\bP}$ for $i=1,2,3$. 
We have an injective morphism 
\begin{equation*}
(\rN_3 \vert_X)^{\vee} \to (\rN_1 \vert_X)^{\vee} \oplus (\rN_2 \vert_X)^{\vee}. 
\end{equation*}
We claim that one of the morphisms $\alpha_1 \colon (\rN_3 \vert_X)^{\vee} \to (\rN_1 \vert_X)^{\vee}$ 
or $\alpha_2 \colon (\rN_3 \vert_X)^{\vee} \to (\rN_2 \vert_X)^{\vee}$ obtained by composition with 
the projections is an isomorphism. 
Since the $(\rN_i \vert_X)^{\vee}$ all have the same rank and determinant, 
$\alpha_k$ is an isomorphism if it is injective. 
Hence it suffices to show that either  
$\cK = \ker(\alpha_2) \subset (\rN_1 \vert_X)^{\vee}$ or  
$\cI = \image(\alpha_2) \subset (\rN_2 \vert_X)^{\vee}$ vanishes. 
If $\mu$ is the common slope of the $(\rN_i \vert_X)^{\vee}$, then 
either $\mu(\cK) \geq \mu$ or $\mu(\cI) \geq \mu$. 
Since $(\rN_1 \vert_X)^{\vee}$ and $(\rN_2 \vert_X)^{\vee}$ are slope stable 
by Proposition~\ref{proposition-Ni-stable}, it follows that either $\cK$ or $\cI$ 
vanishes. 

So we may assume $\alpha_1 \colon (\rN_3 \vert_X)^{\vee} \to (\rN_1 \vert_X)^{\vee}$ is 
an isomorphism. 
Note that this means $\Gr_2$ and $\Gr_3$ intersect transversely along $X$. 
We will show that $X = \Gr_2 \cap \Gr_3$, which by Lemma~\ref{lemma-unique-Gr-intersection}  
proves the proposition. 
The intersection $\Gr_2 \cap \Gr_3$ consists of components of dimension at least $3$. 
If there are no components of dimension at least $4$, then $X$ and $\Gr_2 \cap \Gr_3$ have 
the same degree in $\bP$, forcing the inclusion $X \subset \Gr_2 \cap \Gr_3$ to be an equality. 
Hence it suffices to show there are no components of dimension at least~$4$. 
By the transversality of $\Gr_2$ and $\Gr_3$ along $X$, the components of the 
intersection $\Gr_2 \cap \Gr_3$ which are not equal to $X$ must be disjoint from $X$. 
But by Lemma~\ref{lemma-class-X-Gr} the class of $X$ in the Chow ring of 
$\Gr_2$ is $5H^3$, which implies $X$ intersects nontrivially any effective 
cycle in $\Gr_3$ of dimension at least $4$. 
\end{proof}

\subsection{Automorphism groups} 
\label{subsection-aut-group}
{\GPK} threefolds can alternatively be described as intersections of 
translates of a fixed Grassmannian. 
Namely, fix an isomorphism $\phi \colon \wedge^2V \xrightarrow{\sim} W$. 
Let $\Gr \subset \bP$ denote the corresponding embedded Grassmannian $\Gr(2,V)$.  
Let $G = \PGL(W)$. 
Then for any $(g_1, g_2) \in G \times G$ we set 
\begin{equation}
\label{Xg1g2}
X_{g_1,g_2} = g_1 \Gr \cap g_2 \Gr \subset \bP. 
\end{equation}
By definition, {\GPK} threefolds are precisely the smooth $X_{g_1, g_2}$. 
Note that setting $H = \PGL(V)$, there is an embedding $H \to G$ induced by 
the isomorphism $\wedge^2V \cong W$.  

\begin{lemma}
\label{lemma-aut-group} 
Let $X = X_{g_1, g_2}$ be a {\GPK} threefold. 
The automorphism group scheme $\Aut(X)$ is finite and reduced, 
and can be described explicitly as 
\begin{equation*}
\Aut(X) = (g_1 H g_1^{-1} \cap g_2 H g_2^{-1}) \cup 
(g_2 H g_1^{-1} \cap g_1 H g_2^{-1}) 
\subset G. 
\end{equation*}
\end{lemma}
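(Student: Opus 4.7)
The plan is to prove the two assertions separately: first, that $\Aut(X)$ is a finite reduced group scheme, and second, that it coincides with the prescribed union of two subsets of $G$.

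For the first part, reducedness is automatic in characteristic zero. Since $\Pic(X) = \bZ \cdot \cO_X(1)$ (recalled in \S\ref{section-intro}), every automorphism of $X$ preserves $\cO_X(1)$, which forces $\Aut(X)$ to be of finite type; so it then suffices to verify $H^0(X, T_X) = 0$. This is a standard fact for any Calabi--Yau threefold: the trivialization $\omega_X \cong \cO_X$ induces an isomorphism $T_X \cong \Omega_X^2$ via the wedge pairing, and
\begin{equation*}
H^0(X, \Omega_X^2) = H^{2,0}(X) = 0
\end{equation*}
by Hodge symmetry together with the Calabi--Yau vanishing $H^2(X, \cO_X) = 0$.

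For the second part, I would start from an arbitrary $\sigma \in \Aut(X)$ and use that $\sigma$ preserves $\cO_X(1)$, together with Lemma~\ref{lemma-restriction-Qi}---which identifies $W^\vee = H^0(\bP, \cO_{\bP}(1))$ with $H^0(X, \cO_X(1))$ via restriction---to lift $\sigma$ uniquely to an element $g \in G = \PGL(W)$ stabilizing $X \subset \bP$. (Injectivity of the resulting map $\Aut(X) \to G$ follows from linear non-degeneracy of $X$ in $\bP$, itself a consequence of Lemma~\ref{lemma-restriction-Qi}.) This reduces the problem to computing the stabilizer of $X$ in $G$. By Proposition~\ref{proposition-unique-Gr}, the only embedded Grassmannians in $\bP$ containing $X$ are $g_1 \Gr$ and $g_2 \Gr$, so any $g$ stabilizing $X$ must permute the pair $\{g_1 \Gr, g_2 \Gr\}$. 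Since the stabilizer of $\Gr$ in $G$ equals $H$---because $\Aut(\Gr(2, V)) = \PGL(V) = H$, with no extra duality involution since $\dim V = 5 \neq 4$---the two cases in which $g$ either preserves both $g_i \Gr$ or swaps them yield precisely the subsets $g_1 H g_1^{-1} \cap g_2 H g_2^{-1}$ and $g_2 H g_1^{-1} \cap g_1 H g_2^{-1}$ appearing in the statement. The converse inclusion is immediate: any element of either subset preserves both underlying Grassmannians (as sets) and therefore their intersection $X$.

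The main potentially delicate step is the extension of every abstract automorphism of $X$ to the ambient projective space $\bP$, but this is handled cleanly by the cohomological identification in Lemma~\ref{lemma-restriction-Qi}. Beyond this, the argument is essentially a repackaging of Proposition~\ref{proposition-unique-Gr} together with the classical identification $\Aut(\Gr(2, 5)) = \PGL(5)$; I do not anticipate any substantial obstacle.
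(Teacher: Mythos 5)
Your proof is correct and follows essentially the same route as the paper's: identify $\Aut(X)$ with the stabilizer of $X$ in $G=\PGL(W)$ (which requires the restriction map $\rH^0(\bP,\cO_{\bP}(1))\to \rH^0(X,\cO_X(1))$ to be an isomorphism, as you note), then apply Proposition~\ref{proposition-unique-Gr} to force any stabilizing $a\in G$ to permute $\{g_1\Gr, g_2\Gr\}$, and finally use $\mathrm{Stab}_G(\Gr) = H$. The paper states the first part (finite and reduced) without proof, simply as a consequence of $X$ being Calabi--Yau of Picard number $1$; you supply the standard argument via $T_X \cong \Omega_X^2$ and $H^{2,0}(X)=0$, which is a useful expansion but not a different approach.
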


\begin{proof}
Since $X$ is a Calabi--Yau variety of Picard number $1$, 
it follows that $\Aut(X)$ is finite and reduced.  
Further, $\Aut(X)$ embeds in $G$ as the automorphisms $a \in G$ of $\bP$ which 
fix $X$, i.e. which satisfy 
\begin{equation*}
ag_1 \Gr \cap ag_2 \Gr = g_1 \Gr \cap g_2 \Gr. 
\end{equation*}
By Proposition~\ref{proposition-unique-Gr} this means 
either $ag_1 \Gr = g_1 \Gr$ and $ag_2 \Gr = g_2 \Gr$, or 
$ag_1 \Gr = g_2\Gr$ and $a g_2 \Gr = g_1 \Gr$. 
The first case is equivalent to $a \in g_1 H g_1^{-1} \cap g_2 H g_2^{-1}$ 
and the second to $a \in g_2 H g_1^{-1} \cap g_1 H g_2^{-1}$. 
\end{proof}


\section{Moduli of {\GPK} threefolds} 
\label{section-moduli}

The goal of this section is to prove Theorem~\ref{theorem-moduli}. 
In \S\ref{subsection-moduli-stack-KCY} we construct the moduli stack $\cM$ 
of {\GPK} threefolds, and show it is a smooth separated Deligne--Mumford stack 
of finite type over $k$. 
In \S\ref{subsection-PGL-param} we construct the moduli stack $\cN$ of 
{\GPK} data (as a quotient of an open subspace of $\PGL(W) \times \PGL(W)$) 
and the $\PGL$-parameterization $f \colon \cN \to \cM$, and show that 
$\cN$ has the same properties as $\cM$. 
In \S\ref{subsection-PGL-derivative} we show that the derivative of $f$ at any 
point is an isomorphism. 
Finally, in \S\ref{subsection-theorem-moduli} we combine these results to prove 
Theorem~\ref{theorem-moduli}. 

\subsection{The moduli stack of {\GPK} threefolds}  
\label{subsection-moduli-stack-KCY} 
Let $P \in \bQ[t]$ be the Hilbert polynomial of a {\GPK} threefold.
Let $\Hilb$ denote the Hilbert scheme of closed subschemes of $\bP$ with Hilbert polynomial $P$, 
and let $\Hilb^{\circ} \subset \Hilb$ denote the open subscheme parameterizing Calabi--Yau threefolds 
which are smooth deformations of a {\GPK} threefold. 
The natural (left) action of $G = \PGL(W)$ on $\Hilb$ preserves $\Hilb^{\circ}$.  
Let 
\begin{equation}
\cM = [G \backslash \Hilb^{\circ}] 
\end{equation}
be the quotient stack and 
$q_{\cM} \colon \Hilb^\circ \to \cM$ the quotient morphism. 
We call  $\cM$ the \emph{moduli stack of {\GPK} threefolds} 
(although strictly speaking $\cM$ parameterizes smooth deformations of {\GPK} threefolds).  

\begin{lemma}
\label{lemma-M}
The stack $\cM$ is a smooth separated Deligne--Mumford stack of finite type over~$k$.  
Moreover, $\cM$ admits a coarse moduli space $\pi_{\cM} \colon \cM \to M$, where 
$M$ is a separated algebraic space of finite type over $k$. 
\end{lemma}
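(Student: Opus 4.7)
The plan is to leverage the presentation $\cM = [G \backslash \Hilb^\circ]$ and verify each property using general facts about Hilbert schemes, cohomological vanishings specific to {\GPK} threefolds, and a Matsusaka--Mumford type rigidity statement. Finite type is immediate, as $\Hilb$ is projective of finite type over $k$, $\Hilb^\circ$ is open, and $G$-quotients preserve finite type. For the Deligne--Mumford property, it suffices to check that every $G$-stabilizer is finite and reduced. The stabilizer of $[X'] \in \Hilb^\circ$ is the polarized automorphism group $\Aut(X', \cO_{X'}(1))$, and for any smooth Calabi--Yau deformation $X'$ of a {\GPK} threefold, Serre duality combined with $\omega_{X'} \cong \cO_{X'}$ gives $\rH^0(X', T_{X'}) \cong \rH^3(X', \Omega^1_{X'})^{\vee}$, which vanishes via Hodge symmetry from $\rH^1(X', \cO_{X'}) = 0$. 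Hence $\Aut^0(X')$ is trivial, the polarized automorphism group is finite, and in characteristic zero automatically reduced; for {\GPK} threefolds this recovers Lemma~\ref{lemma-aut-group}.

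For smoothness of $\cM$ it suffices, via the smooth atlas $q_{\cM}$, to show $\Hilb^\circ$ is smooth. By upper semicontinuity of $\rh^1(\rN_{X'/\bP})$ it is enough to verify $\rH^1(X, \rN_{X/\bP}) = 0$ at every {\GPK} threefold $X$ and then to shrink $\Hilb^\circ$ to this open locus, which discards no {\GPK} point. Using transversality of $\Gr_1$ and $\Gr_2$ along $X$, the normal bundle splits as $\rN_{X/\bP} \cong \rN_1|_X \oplus \rN_2|_X$ with $\rN_i \cong \cQ_i^{\vee}(2)$ by Lemma~\ref{lemma-normal-bundle}. I would then establish the vanishings $\rH^1(X, \cQ_i^{\vee}(2)) = 0$ by resolving $\cO_X$ via the Koszul complex associated to the section of $\rN_{3-i}|_{\Gr_{3-i}}$ cutting $X$ out of $\Gr_{3-i}$, and feeding the Borel--Weil--Bott identities collected in the appendix into the resulting spectral sequence.

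Separatedness of $\cM$ amounts, via the valuative criterion, to properness of the action map $G \times \Hilb^\circ \to \Hilb^\circ \times \Hilb^\circ$, $(g, x) \mapsto (x, gx)$; concretely, any projective isomorphism between two $\Hilb^\circ$-families over the generic point of a DVR must extend across the special point. This is exactly the content of Matsusaka--Mumford rigidity in our polarized Calabi--Yau setting. Once smoothness, separatedness, the Deligne--Mumford property, and finite type are in place, Keel--Mori produces a coarse moduli space $M$ as a separated algebraic space of finite type over $k$. The main technical obstacle is the cohomology vanishing $\rH^1(X, \cQ_i^{\vee}(2)) = 0$ underlying smoothness, which drives the whole argument; the Matsusaka--Mumford input for separatedness is standard but crucially uses the Calabi--Yau hypothesis $\omega_X \cong \cO_X$.
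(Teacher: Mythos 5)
Your arguments for finite type, the Deligne--Mumford property, separatedness, and the existence of a coarse moduli space agree with the paper's proof (which cites Picard number $1$ for the stabilizers, Matsusaka--Mumford, and Keel--Mori); you just fill in a few more details. Where you diverge is smoothness, and there your argument has a real gap.

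The paper proves $\Hilb^\circ$ is smooth by appealing to the Bogomolov--Tian--Todorov theorem, which applies uniformly to \emph{every} point of $\Hilb^\circ$, since $\Hilb^\circ$ parameterizes smooth Calabi--Yau threefolds with $h^{0,1}=h^{0,2}=0$ (so deformations of the pair $(X',\cO_{X'}(1))$ agree with abstract deformations of $X'$, and the Hilbert scheme is smooth over those). You instead want to prove $\rH^1(X,\rN_{X/\bP})=0$ directly. But you only propose to verify this at {\GPK} points, and then to ``shrink $\Hilb^\circ$ to this open locus.'' That replaces $\cM$ by an a priori strictly smaller open substack and hence does not prove the lemma as stated: $\Hilb^\circ$ is defined to contain \emph{all} smooth Calabi--Yau deformations of {\GPK} threefolds, and the paper does not claim (and Theorem~\ref{theorem-moduli} does not give) that every such deformation is itself an intersection of two Grassmannians. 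The argument $f\colon\cN\to\cM$ is an open immersion, not a surjection, so the non-{\GPK} locus of $\Hilb^\circ$ is not ruled out, and you have no vanishing there. This is exactly the point BTT is invoked to sidestep.

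There is also a technical slip in how you propose to prove the vanishing where it does hold. You suggest resolving $\cO_X$ by ``the Koszul complex associated to the section of $\rN_{3-i}|_{\Gr_{3-i}}$ cutting $X$ out of $\Gr_{3-i}$.'' But $X\subset\Gr_{3-i}$ is not the zero locus of a regular section of a rank-$3$ bundle in any evident way: there is no canonical global section of $\rN_{\Gr_{3-i}/\bP}$ whose zero scheme is $\Gr_1\cap\Gr_2$ (the section-of-normal-bundle picture is only first order). The correct resolution is the Pfaffian (Buchsbaum--Eisenbud) one recorded in the appendix as \eqref{resolution-IX}, which is \emph{not} Koszul. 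With that resolution the Borel--Weil--Bott input does give $\RGamma(\Gr_i,\cI_{X/\Gr_i}\otimes\rN_i)=0$, hence $\rH^1(X,\rN_i|_X)\cong\rH^1(\Gr_i,\rN_i)=0$ by Proposition~\ref{proposition-BWB}; so the vanishing is indeed true at {\GPK} points, but via a Pfaffian rather than a Koszul complex. Even with that repaired, the first issue above remains: you still need BTT (or some other global argument) to conclude smoothness on all of $\Hilb^\circ$.
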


\begin{proof}
A geometric point of $\Hilb^{\circ}$ corresponds to a Calabi--Yau threefold 
of Picard number~$1$, so its stabilizer in $G$ is finite and reduced. 
Hence $\cM$ is Deligne--Mumford. 
The scheme $\Hilb^{\circ}$ is of finite type over $k$, and it is smooth by 
the Bogomolov--Tian--Todorov theorem on unobstructedness of Calabi--Yau varieties \cite{tian, todorov}. 
Hence $\cM$ is smooth and of finite type over $k$. 
Separatedness of $\cM$ follows from a result of 
Matsusaka and Mumford~\cite{matsusaka-mumford}.
Finally, the existence of a coarse 
moduli space with the stated properties then follows from a 
result of Keel and Mori~\cite{keel-mori}. 
\end{proof}

\begin{remark}
Let $X$ be a {\GPK} threefold. 
By Lemma~\ref{lemma-aut-group}, the automorphism group scheme 
$\Aut(X)$ coincides with the automorphism group scheme 
$\Aut_{\cM}([X])$ of the corresponding point $[X] \in \cM$. 
\end{remark}

\subsection{The $\PGL$-parameterization} 
\label{subsection-PGL-param} 

In \S\ref{subsection-aut-group} we observed any {\GPK} threefold can be 
written in the form~\eqref{Xg1g2}. 
We obtain a parameterization of $\cM$ by quotienting by the redundancies 
in this description, as follows. 
The quotient $G/H$ is a quasi-projective variety, which can be thought of as a parameter 
space for embeddings of the Grassmannian $\Gr(2,V)$ into $\bP$. 
Namely, for any point $g \in G$ the corresponding Grassmannian is $g \Gr$, 
which only depends on the coset~$gH$. 
Similarly $X_{g_1, g_2} \subset \bP$ only depends on the coset $(g_1H, g_2H)$, 
and the family of these varieties over $G \times G$ descends 
to a closed subscheme 
\begin{equation*}
\cX \subset G/H \times G/H \times \bP. 
\end{equation*}
Let $U \subset G/H \times G/H$ be the open subscheme parameterizing 
the smooth $3$-dimensional fibers of $\cX$. 
Then the restriction $\cX_U \to U$ is a family of {\GPK} threefolds, such that 
every {\GPK} threefold occurs as a fiber. 
This family induces a morphism $\tf \colon U \to \Hilb^\circ$. 

The group $\bZ/2 \times G$ acts on $G/H \times G/H$ (on the left), 
where $\bZ/2$ acts by swapping the two factors and $G$ acts by 
multiplication. This action preserves $U \subset G/H \times G/H$. 
Let 
\begin{equation}
\label{equation-N}
\cN = [(\bZ/2 \times G) \backslash U]
\end{equation} 
be the quotient stack and $q_{\cN} \colon U \to \cN$ the quotient morphism. 
We call $\cN$ the \emph{moduli stack of {\GPK} data}. 

The morphism $\tf \colon U \to \Hilb^\circ$ is equivariant with respect to the projection  
$\bZ/2 \times G \to G$, 
and hence descends to a morphism 
\begin{equation*}
f \colon \cN \to \cM,
\end{equation*}
which we call the \emph{$\PGL$-parameterization of $\cM$}. 

We note the following consequence of Proposition \ref{proposition-unique-Gr} 
and Lemma~\ref{lemma-aut-group}. 
Given a stack $\cY$ over~$k$, we denote by $|\cY(k)|$ the set of 
isomorphism classes of the $k$-points $\cY(k)$. 

\begin{lemma}
\label{lemma-f-points-auts}
We have: 
\begin{enumerate}
\item $f$ induces an injection $|\cN(k)| \to |\cM(k)|$. 
\item $f$ induces isomorphisms between the automorphism 
group schemes of points. 
\end{enumerate}
\end{lemma}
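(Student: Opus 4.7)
The plan is to deduce both statements by directly unwinding the definitions of $\cN$ and $\cM$ as quotient stacks, and then invoking Lemma~\ref{lemma-unique-Gr-intersection} for~(1) and Lemma~\ref{lemma-aut-group} for~(2).

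For (1), I would take two points $(g_1 H, g_2 H)$ and $(g_3 H, g_4 H)$ of $U$ whose images in $|\cM(k)|$ coincide. Since $\cM = [G \backslash \Hilb^{\circ}]$, this produces some $a \in G$ with $a X_{g_1,g_2} = X_{g_3,g_4}$, that is,
\begin{equation*}
ag_1\Gr \cap ag_2\Gr \;=\; g_3\Gr \cap g_4\Gr.
\end{equation*}
Lemma~\ref{lemma-unique-Gr-intersection}, applied to this common {\GPK} threefold, then forces the equality of unordered pairs $\{ag_1\Gr, ag_2\Gr\} = \{g_3\Gr, g_4\Gr\}$, which is precisely the condition that $(g_3 H, g_4 H)$ lies in the $(\bZ/2 \times G)$-orbit of $(g_1 H, g_2 H)$. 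This gives~(1).

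For (2), fix $s \in \cN(k)$ represented by $(g_1 H, g_2 H) \in U$, and set $X = X_{g_1,g_2}$. The plan is to exhibit both $\Aut_{\cN}(s)$ and $\Aut_{\cM}(f(s))$ as the same concrete union of two subsets of $G$, and to check that $f$ realizes the identification. Since $\cN$ is a quotient stack, $\Aut_{\cN}(s)$ is the stabilizer of $(g_1 H, g_2 H)$ in $\bZ/2 \times G$; unwinding the two cases according to $\epsilon \in \bZ/2$ gives
\begin{equation*}
\Aut_{\cN}(s) \;=\; \{(0,a) : a \in g_1 H g_1^{-1} \cap g_2 H g_2^{-1}\} \;\cup\; \{(1,a) : a \in g_2 H g_1^{-1} \cap g_1 H g_2^{-1}\}.
\end{equation*}
On the other hand, Lemma~\ref{lemma-aut-group} describes $\Aut_{\cM}(f(s)) = \Aut(X)$ as the union of the same two subsets of $G$, and the homomorphism induced by $f$ is tautologically the projection $(\epsilon, a) \mapsto a$. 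Surjectivity onto $\Aut(X)$ is then immediate.

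The subtle point, and the step I expect to require the most care, is injectivity on automorphism groups. For this I would observe that $\Gr_1 = g_1\Gr$ and $\Gr_2 = g_2\Gr$ are necessarily distinct, since otherwise $X = \Gr_1$ would have dimension $6$ rather than $3$. An element $a \in G$ lying in both subsets of $\Aut(X)$ would then simultaneously preserve and swap $\Gr_1$ and $\Gr_2$, which is impossible; so $a$ determines its accompanying $\epsilon$ uniquely, and the projection is a bijection on $k$-points. Since both stabilizers are finite and reduced (by Lemma~\ref{lemma-aut-group} on the $\cM$ side, and because the stabilizer of a point under the smooth group scheme $\bZ/2 \times G$ in characteristic zero is automatically smooth), this bijection of $k$-points upgrades to the desired isomorphism of group schemes.
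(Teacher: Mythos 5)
Your proof is correct and uses exactly the ingredients the paper indicates (the uniqueness of the pair of Grassmannians, via Lemma~\ref{lemma-unique-Gr-intersection}, for part~(1), and Lemma~\ref{lemma-aut-group} for part~(2)), so it matches the paper's intended argument; the paper itself leaves the verification to the reader. Your handling of the one genuinely non-obvious point in~(2) --- that $\Gr_1 \neq \Gr_2$ (else $X$ would be $6$-dimensional) forces the two pieces of the stabilizer in $\bZ/2 \times G$ to project disjointly into $G$, so that $\epsilon$ is determined by $a$ --- is the right observation, and the upgrade from a bijection on $k$-points to an isomorphism of finite reduced (hence constant, in characteristic~$0$) group schemes is sound, with finiteness of $\Aut_{\cN}(s)$ coming from the injection into the finite $\Aut(X)$.
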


We have the following analog of Lemma~\ref{lemma-M} for $\cN$. 

\begin{lemma}
\label{lemma-N}
The stack $\cN$ is a smooth separated Deligne--Mumford stack of 
finite type over~$k$. Moreover, $\cN$ admits a coarse moduli space 
$\pi_{\cN} \colon \cN \to N$, where $N$ is a separated algebraic space of finite type over $k$.
\end{lemma}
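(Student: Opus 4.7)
The plan is to mimic the proof of Lemma~\ref{lemma-M}. The quasi-projective variety $G/H \times G/H$ is smooth and of finite type over $k$, so its open subscheme $U$ inherits these properties. Since $\bZ/2 \times G$ is a smooth algebraic group, the quotient stack $\cN = [(\bZ/2 \times G) \backslash U]$ is automatically an algebraic stack, smooth and of finite type over $k$. For the Deligne--Mumford property, I would compute the stabilizer of a geometric point $(g_1H, g_2H) \in U$ in $\bZ/2 \times G$: an element $(\sigma, a)$ stabilizes it precisely when either $\sigma = \id$ and $a \in g_1 H g_1^{-1} \cap g_2 H g_2^{-1}$, or $\sigma$ is the swap and $a \in g_2 H g_1^{-1} \cap g_1 H g_2^{-1}$. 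By Lemma~\ref{lemma-aut-group}, this stabilizer is canonically identified with the finite reduced group scheme $\Aut(X_{g_1, g_2})$, so $\cN$ is Deligne--Mumford.

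The main difficulty is separatedness, which I would verify by the valuative criterion, leveraging separatedness of $\cM$ (Lemma~\ref{lemma-M}) together with Proposition~\ref{proposition-unique-Gr}. Let $R$ be a strictly henselian DVR with fraction field $K$, and let $s_1, s_2 \in \cN(R)$ be two objects equipped with an isomorphism $\phi_K \colon s_1|_K \xrightarrow{\sim} s_2|_K$ over $K$. Since $R$ is strictly henselian, every $\bZ/2 \times G$-torsor over $\Spec R$ is trivial, so each $s_i$ is represented by a morphism $\alpha_i \colon \Spec R \to U$, and $\phi_K$ by an element $(\sigma, g_K) \in (\bZ/2 \times G)(K)$ with $(\sigma, g_K) \cdot \alpha_1|_K = \alpha_2|_K$. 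Pushing forward to $\cM$, the isomorphism $f(\phi_K)$ is realized by $g_K$ acting on the corresponding families of {\GPK} threefolds $X_1, X_2 \subset \bP_R$. By separatedness of $\cM$, $f(\phi_K)$ extends uniquely to an isomorphism over $\Spec R$, necessarily realized by some $g_R \in G(R)$ with $g_R|_K = g_K$ and $g_R X_1 = X_2$. It remains to verify that $(\sigma, g_R) \cdot \alpha_1 = \alpha_2$ as morphisms $\Spec R \to U$. But both $g_R \cdot \sigma \alpha_1$ and $\alpha_2$ are families of pairs of Grassmannians whose intersection is the common family $X_2$; by Proposition~\ref{proposition-unique-Gr}, a single {\GPK} threefold is contained in a unique pair of Grassmannians (up to swap), so these two families agree on each geometric fiber of $\Spec R$, and hence agree on $\Spec R$ since they agree generically.

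The existence of the coarse moduli space $N$, a separated algebraic space of finite type over $k$, then follows from the Keel--Mori theorem~\cite{keel-mori} applied to the smooth separated Deligne--Mumford stack $\cN$, exactly as in the proof of Lemma~\ref{lemma-M}.
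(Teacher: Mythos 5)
Your proof is essentially correct and follows the paper's approach: smoothness, finite type, and the Deligne--Mumford property are established as in Lemma~\ref{lemma-M}, separatedness is checked via the valuative criterion by first lifting the isomorphism over $\cM$ (using separatedness of $\cM$) and then upgrading it to an isomorphism over $\cN$, and Keel--Mori gives the coarse space. One remark on the final step of your separatedness argument: invoking Proposition~\ref{proposition-unique-Gr} for fiberwise agreement is both a detour and slightly imprecise. It is a detour because once $(\sigma, g_R)\cdot\alpha_1$ and $\alpha_2$ are known to be morphisms $\Spec R \to U$ that agree over $\Spec K$, they agree on all of $\Spec R$ simply because $U$ is a separated scheme and $\Spec R$ is integral; this is exactly what the paper does, via separatedness of $G/H$. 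It is slightly imprecise because Proposition~\ref{proposition-unique-Gr} only yields that the two pairs of Grassmannians at a geometric point of the special fiber coincide \emph{up to swap}, so as written a small additional (open--closed) argument would be needed to rule out a swap occurring over the closed point; the direct appeal to separatedness of $U$ sidesteps this entirely.
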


\begin{proof}
Since the scheme $U$ is smooth and of finite type over $k$, so is the quotient stack $\cN$. 
Since $\cM$ is Deligne--Mumford, so is $\cN$ by Lemma~\ref{lemma-f-points-auts}. 
It remains to show that $\cN$ is separated; then the existence of a coarse 
moduli space with the stated properties follows from a result of 
Keel and Mori~\cite{keel-mori}. 
By the valuative criterion, this amounts to the following.  
Let $R$ be a valuation ring with field of fractions $K$, 
let $x, x' \colon \Spec(R) \to \cN$ be two $\Spec(R)$-points of $\cN$, 
and let $\gamma \colon x|_{K} \xrightarrow{\sim} x'|_{K}$ be 
an isomorphism of the restrictions to $\Spec(K)$. 
Then we must show there exists an isomorphism 
$\tilde{\gamma} \colon x \xrightarrow{\sim} x'$ restricting to $\gamma$. 
The points $x$ and $x'$ correspond to the data of  
embeddings $\Gr_{1,R}, \Gr_{2,R} \subset \bP_R$ and 
$\Gr'_{1,R}, \Gr'_{2,R} \subset \bP_R$ of $\Gr_R(2,V)$ such that 
\begin{equation*}
X_R = \Gr_{1, R} \cap \Gr_{2,R} \quad \text{ and } \quad 
X'_R = \Gr'_{1, R} \cap \Gr'_{2,R} 
\end{equation*}
are families of {\GPK} threefolds over $\Spec(R)$. 
Using the presentation $\bZ/2 = \set{\pm 1}$, 
the isomorphism $\gamma$ corresponds to a point 
$(\epsilon, a) \in \bZ/2 \times G(K)$ such that 
\begin{alignat*}{2}
a \Gr_{1,K} = \Gr'_{1,K} , ~~  &   a \Gr_{2,K} = \Gr'_{2,K} & \quad & \text{ if } \epsilon = +1, \\
a \Gr_{1,K} = \Gr'_{2,K} ,  ~~ &   a \Gr_{2,K} = \Gr'_{1,K} & \quad & \text{ if } \epsilon = -1.  
\end{alignat*}
In particular, $a$ is an automorphism of $\bP_K$ such that 
$a X_K = X'_K$. 
By separatedness of the moduli stack $\cM$ 
(Lemma~\ref{lemma-M}), 
we can find $\tilde{a} \in G(R)$ restricting to $a$ such 
that $\tilde{a} X_R = X'_R$.  
We claim 
\begin{alignat*}{2}
\ta \Gr_{1,R} = \Gr'_{1,R} , ~~  &   a \Gr_{2,R} = \Gr'_{2,R} & \quad & \text{ if } \epsilon = +1, \\
\ta \Gr_{1,R} = \Gr'_{2,R} ,  ~~ &   a \Gr_{2,R} = \Gr'_{1,R} & \quad & \text{ if } \epsilon = -1.  
\end{alignat*}
Indeed, by the separatedness of 
the parameter space $G/H$ for embeddings of 
$\Gr(2,V)$ into $\bP$, 
if two embeddings of $\Gr_R(2,V)$ into $\bP_R$ 
coincide after restriction to $K$, then they coincide.  
Hence $(\epsilon, \ta) \in \bZ/2 \times G(R)$ gives the desired 
extension of $\gamma$ to an isomorphism 
$\tilde{\gamma} \colon x \xrightarrow{\sim} x'$. 
\end{proof}

\subsection{Derivative of the $\PGL$-parameterization} 
\label{subsection-PGL-derivative}

Let $s \in U$ be a point, let $t = \tf(s) \in \Hilb^\circ$, and denote by 
$[s] \in \cN$ and $[t] \in \cM$ their images. 
Our goal is to prove that the derivative $\rd_{[s]} f \colon \rT_{[s]} \cN \to \rT_{[t]} \cM$ 
is an isomorphism. In fact, we will prove a slightly more precise result, which gives an 
explicit description of these tangent spaces. 

To formulate this, 
let $\act_s \colon G  \to U$ be the action morphism at $s$ given by $\act_s(g) = g \cdot s$,  
and similarly let $\act_t \colon G \to \Hilb^\circ$ be the action morphism at $t$ given by 
$\act_t(g) = g \cdot t$. 
Then there is a commutative diagram
\begin{equation}
\label{diagram-spaces}
\vcenter{
\xymatrix{
G \ar@{=}[d] \ar[rr]^{\act_s } & & U \ar[d]_-{\tf} \ar[rr]^{q_{\cN}} & & 
\cN \ar[d]_-{f} \\
G \ar[rr]^{\act_t} && \Hilb^\circ \ar[rr]^{q_{\cM}} & & \cM  
}
}
\end{equation}
Let $\fg$ denote the Lie algebra of $G$. 
The rest of this subsection is dedicated to the proof of the following result. 

\begin{proposition}
\label{proposition-derivative-f}
Taking derivatives in~\eqref{diagram-spaces} gives a 
commutative diagram
\begin{equation}
\label{tangent-spaces}
\vcenter{
\xymatrix{
0 \ar[r] & \fg \ar@{=}[d] \ar[rr]^{\rd_{1} \act_s } & & \rT_sU \ar[d]_{\rd_s \tf}^{\wr} \ar[rr]^{\rd_s q_{\cN}} & & 
\rT_{[s]} \cN \ar[d]_{\rd_{[s]} f}^{\wr} \ar[r] & 0 \\
0 \ar[r] & \fg \ar[rr]^{\rd_{1} \act_t} && \rT_t \Hilb^\circ \ar[rr]^{\rd_t q_{\cM}} & & \rT_{[t]} \cM \ar[r] & 0  
}
}
\end{equation}
with exact rows and vertical maps isomorphisms. 
\end{proposition}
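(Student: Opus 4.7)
The plan is to identify the tangent spaces in~\eqref{tangent-spaces} concretely, show the middle vertical arrow $\rd_s \tf$ is an isomorphism by a cohomology computation, extract exactness of the rows from the quotient-stack presentations, and finish with the five lemma.

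For the tangent space identifications, standard Hilbert scheme deformation theory gives $\rT_t \Hilb^\circ = \rH^0(X, \rN_{X/\bP})$; writing $\rN_i = \rN_{\Gr_i/\bP}$ and using that $X = \Gr_1 \cap \Gr_2$ is smooth of expected codimension six, the intersection is transverse and so
$$\rN_{X/\bP} \cong \rN_1\vert_X \oplus \rN_2\vert_X.$$
On the other side, $H = \PGL(V)$ is the stabilizer of $\Gr \subset \bP$ under the $G$-action, so $G/H$ is a single $G$-orbit of Pl\"ucker-embedded copies of $\Gr(2,V)$ in $\bP$ whose tangent space at $[\Gr_i]$ is $\rH^0(\Gr_i, \rN_i)$. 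Since $U \subset G/H \times G/H$ is open,
$$\rT_s U = \rH^0(\Gr_1, \rN_1) \oplus \rH^0(\Gr_2, \rN_2),$$
and unwinding the universal family underlying $\tf$ identifies $\rd_s \tf$ with the direct sum of the two restriction maps $\rH^0(\Gr_i, \rN_i) \to \rH^0(X, \rN_i\vert_X)$.

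The main obstacle is verifying each of these restriction maps is an isomorphism. From the exact sequence $0 \to \rN_i \otimes \cI_{X/\Gr_i} \to \rN_i \to \rN_i\vert_X \to 0$ on $\Gr_i$, this reduces to the vanishing
$$\rH^0(\Gr_i, \rN_i \otimes \cI_{X/\Gr_i}) = \rH^1(\Gr_i, \rN_i \otimes \cI_{X/\Gr_i}) = 0.$$
The ideal $\cI_{X/\Gr_i}$ is cut out in $\Gr_i$ by the restrictions of the Pl\"ucker quadrics defining $\Gr_j$ ($j \neq i$); combined with the identification $\rN_i \cong \cQ_i^{\vee}(2)$ of Lemma~\ref{lemma-normal-bundle}, I would resolve $\cI_{X/\Gr_i}$ and reduce the vanishings to Borel--Weil--Bott computations on $\Gr(2,V)$ in the spirit of Lemmas~\ref{lemma-Ni-vanishing} and~\ref{lemma-restriction-Qi}.

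For the rows of~\eqref{tangent-spaces}, the quotient-stack presentations $\cM = [G \backslash \Hilb^\circ]$ and $\cN = [(\bZ/2 \times G) \backslash U]$ identify the tangent space of each stack as the cokernel of the infinitesimal action map, with kernel given by the Lie algebra of the stabilizer. Both stacks are Deligne--Mumford by Lemmas~\ref{lemma-M} and~\ref{lemma-N}, so stabilizers are finite with trivial Lie algebras; the discrete factor $\bZ/2$ contributes nothing to the tangent space of $\cN$. This yields the claimed exactness. Since the left vertical arrow is the identity and the middle one is an isomorphism by the argument above, the five lemma then forces $\rd_{[s]} f \colon \rT_{[s]} \cN \to \rT_{[t]} \cM$ to be an isomorphism as well.
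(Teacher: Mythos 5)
Your overall strategy matches the paper's: reduce to showing $\rd_s\tf$ is an isomorphism, identify the tangent spaces in terms of sections of normal bundles, verify the needed isomorphisms by coherent cohomology (resolving $\cI_{X/\Gr_i}$ and applying Borel--Weil--Bott), derive exactness of the rows from the Deligne--Mumford quotient-stack presentations, and conclude by the five lemma. The analysis of the restriction maps $\rH^0(\Gr_i,\rN_i)\to\rH^0(X,\rN_i|_X)$ is exactly the paper's Lemma~\ref{lemma-restriction-Ni}, and the right-exactness-plus-DM argument for the rows is the paper's too.

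There is, however, one genuine gap in how you set up the left column. You assert that the tangent space to $G/H$ at $[\Gr_i]$ \emph{is} $\rH^0(\Gr_i,\rN_i)$. Intrinsically, $\rT_{g_iH}(G/H)=\fg/\fh$; what you need is that the map
$\fg/\fh \to \rH^0(\Gr_i,\rN_i)$
induced by the orbit morphism $G/H\to\Hilb_Q$ is an isomorphism. Injectivity is clear since $H$ is the stabilizer, but surjectivity is not free: it requires $\rH^1(\Gr_i,\rT_{\Gr_i})=0$ and that the restriction $\rH^0(\bP,\rT_{\bP})\to\rH^0(\Gr_i,\rT_{\bP}|_{\Gr_i})$ be an isomorphism. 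The latter is a separate Borel--Weil--Bott computation using the Pfaffian resolution of $\cI_{\Gr_i/\bP}$ (the paper's Lemma~\ref{lemma-restriction-TPZ}), and the whole identification is the content of the paper's Lemma~\ref{lemma-dj}. The paper isolates precisely this step by factoring $\tf$ as $\tf'\circ j$ through an open subscheme $U'\subset\Hilb_Q\times\Hilb_Q$, so that $\rd_s j$ carries the $G/H$-to-Hilbert-scheme comparison and $\rd_{j(s)}\tf'$ carries the restriction maps. Your proposal silently merges these two independent isomorphisms into one unproved identification; the second half you do flag as requiring cohomology, but the first half you do not, and it needs a parallel argument.

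Two smaller remarks. The phrase that $\cI_{X/\Gr_i}$ is ``cut out by the Plücker quadrics of $\Gr_j$'' is loose; what is actually used is the length-three Pfaffian (Buchsbaum--Eisenbud) resolution~\eqref{resolution-IX} of $\cI_{X/\Gr_i}$. And for the exactness of the left ends of the rows, you correctly invoke the DM property, but the relevant point is specifically that the $G$-stabilizer (and hence its Lie algebra) is trivial; the $\bZ/2$ factor, being étale, contributes nothing — you say this, which is fine.
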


From the presentations of $\cN$ and $\cM$ as quotient stacks it 
follows that the rows of~\eqref{tangent-spaces} are right exact, 
but since $\cN$ and $\cM$ are Deligne--Mumford (Lemmas~\ref{lemma-N} 
and~\ref{lemma-M}), they are in fact exact.  
Hence to prove Proposition~\ref{proposition-derivative-f}, 
it suffices to show $\rd_{s}\tf \colon \rT_sU \to \rT_t \Hilb^\circ$ is an isomorphism. 

To this end, we factor $\tf \colon U \to \Hilb^\circ$ as follows. 
Let $Q \in \bQ[t]$ be the Hilbert polynomial of $\Gr \subset \bP$, and let 
$\Hilb_Q$ be the Hilbert scheme of closed subschemes of $\bP$ with Hilbert 
polynomial $Q$. 
Let $\cY \subset \Hilb_Q \times \bP$ denote the universal family, and 
let $\cY_i \to \Hilb_Q \times \Hilb_Q$, $i=1,2,$ denote its pullback along each of the projections. 
Define $U' \subset \Hilb_Q \times \Hilb_Q$ to be the open subscheme   
over which the fibers of the morphism 
\begin{equation*}
\cY_1 \times_{\bP} \cY_2 \to \Hilb_Q \times \Hilb_Q 
\end{equation*}
are smooth deformations of a {\GPK} threefold, and let 
\begin{equation*}
\tf' \colon U' \to \Hilb^\circ 
\end{equation*}
be the induced morphism. 
The morphism $\tf$ factors through $\tf'$. 
Indeed, consider the closed subscheme $\cZ \subset G/H \times \bP$ 
whose fiber over $[g] \in G/H$ is $g\Gr \subset \bP$. 
This induces a morphism 
\begin{equation*}
\gamma \colon G/H \to \Hilb_Q 
\end{equation*} 
such that $\gamma \times \gamma \colon G/H \times G/H \to \Hilb_Q \times \Hilb_Q$ 
takes $U$ into $U'$, and hence induces a morphism 
\begin{equation*}
j \colon U \to U'. 
\end{equation*}
It follows from the definitions that $\tf = \tf' \circ j$. 
Thus $\rd_s\tf = \rd_{j(s)} \tf' \circ \rd_{s} j$, 
so to prove Proposition~\ref{proposition-derivative-f} it suffices to show 
$\rd_{s}j$ and $\rd_{j(s)}\tf'$ are isomorphisms. 
This is the content of the next two lemmas. 

\begin{lemma}
\label{lemma-dj}
The map $\rd_{s}j \colon \rT_{s}U \to \rT_{j(s)}U'$ is an isomorphism. 
\end{lemma}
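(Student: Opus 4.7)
The plan is to reduce the statement, via the product structure and $G$-equivariance, to the single computation of the derivative of $\gamma \colon G/H \to \Hilb_Q$ at the identity coset $[1]$.

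First, the morphism $j \colon U \to U'$ is the restriction of $\gamma \times \gamma \colon G/H \times G/H \to \Hilb_Q \times \Hilb_Q$ to an open subscheme, and tangent spaces of products split canonically as direct sums. So the derivative $\rd_s j$ decomposes as a sum of two copies of $\rd_{[g_k]} \gamma$ (one for each factor), and it suffices to show that $\rd_{[g]} \gamma \colon \rT_{[g]}(G/H) \to \rT_{\gamma([g])} \Hilb_Q$ is an isomorphism for every $[g] \in G/H$. Since $\gamma$ is $G$-equivariant for the natural left $G$-actions on both sides, and $G$ acts transitively on $G/H$, it is enough to treat the case $[g] = [1]$, where $\gamma([1]) = [\Gr]$.

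Next, I identify the two tangent spaces. The tangent space to the homogeneous space $G/H$ at $[1]$ is canonically $\fg/\fh$. By deformation theory of the Hilbert scheme, $\rT_{[\Gr]} \Hilb_Q = \rH^0(\Gr, \rN_{\Gr/\bP})$, and under these identifications $\rd_{[1]} \gamma$ is the canonical map arising from the infinitesimal $G$-action on $\bP$. To see that this map is an isomorphism, I would combine the Euler sequence restricted to $\Gr$,
\begin{equation*}
0 \to \cO_{\Gr} \to W^\vee \otimes \cO_{\Gr}(1) \to \rT\bP|_{\Gr} \to 0,
\end{equation*}
with the normal bundle sequence
\begin{equation*}
0 \to \rT\Gr \to \rT\bP|_{\Gr} \to \rN_{\Gr/\bP} \to 0.
\end{equation*}
The ingredients needed are $\rH^0(\Gr,\cO) = k$, $\rH^0(\Gr, \cO(1)) = W^\vee$, $\rH^0(\Gr, \rT\Gr) = \fh$ (from homogeneity), and the vanishings $\rH^1(\Gr, \cO) = \rH^1(\Gr, \cO(1)) = \rH^1(\Gr, \rT\Gr) = 0$. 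Taking the long exact sequences, one then extracts a canonical isomorphism $\fg/\fh \xrightarrow{\sim} \rH^0(\Gr, \rN_{\Gr/\bP})$ that an unraveling identifies with $\rd_{[1]}\gamma$.

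The only nontrivial input is the rigidity vanishing $\rH^1(\Gr, \rT\Gr) = 0$, which is classical for $\Gr(2,5)$ and can in any case be read off from the Borel--Weil--Bott computations collected in Appendix~\ref{appendix}. Every other cohomology needed is standard, so no genuine obstacle remains once the reduction to $[1] \in G/H$ has been made.
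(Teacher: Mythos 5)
Your proof is essentially the paper's argument: reduce to the single derivative $\rd\gamma$ at one point (the paper works with $Z = g\Gr$ for arbitrary $g$, you reduce to $[1]$ by equivariance, which amounts to the same thing), identify $\rT_{[\Gr]}\Hilb_Q$ with $\rH^0(\Gr, \rN_{\Gr/\bP})$, and then run the normal bundle sequence together with the vanishing $\rH^1(\Gr, \rT\Gr) = 0$ and the identifications $\fh \cong \rH^0(\Gr,\rT\Gr)$, $\fg \cong \rH^0(\Gr,\rT\bP|_\Gr)$. The one mild difference is how the last identification is obtained: you take cohomology of the Euler sequence restricted to $\Gr$ directly, whereas the paper instead invokes Lemma~\ref{lemma-restriction-TPZ} (restriction $\rH^0(\bP,\rT_\bP)\to\rH^0(\Gr,\rT_\bP|_\Gr)$ is an isomorphism), which it proves from the Pfaffian resolution of $\cI_{\Gr/\bP}$; both routes give $\rH^0(\Gr,\rT\bP|_\Gr) \cong \fg$, and your Euler-sequence route is arguably more direct here. (Minor typo: the middle term of the Euler sequence for $\bP(W)$ should be $W\otimes\cO(1)$, not $W^\vee\otimes\cO(1)$; it doesn't affect the argument.)
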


\begin{proof}
By the definition of $j$, it suffices to show that for 
any $g \in G$ the map 
\begin{equation*}
\rd_{gH} {\gamma} \colon \rT_{gH}(G/H) \to \rT_{\gamma(gH)} \Hilb_Q
\end{equation*}
is an isomorphism. 
The point $\gamma(gH) \in \Hilb_Q$ corresponds to the 
subscheme $Z = g \Gr \subset \bP$, and there is a canonical 
isomorphism $\rT_{\gamma(gH)} \Hilb_Q \cong \rH^0(Z, \rN_{Z/\bP})$. 
The exact sequence 
\begin{equation*}
0 \to \rT_Z \to \rT_{\bP} \vert_Z \to \rN_{Z/\bP} \to 0 
\end{equation*}
induces a long exact sequence 
\begin{equation*}
0 \to \rH^0(Z, \rT_Z) \to \rH^0(Z, \rT_{\bP} \vert_Z) \to 
\rH^0(Z, \rN_{Z/\bP}) \to \rH^1(Z, \rT_Z) \to \dots 
\end{equation*}
We have $\rH^1(Z, \rT_Z) = 0$, 
so the first three terms form a short exact sequence.  
Moreover, there is a canonical isomorphism $\fh \cong \rH^0(Z, \rT_Z)$, 
since $\rH^0(Z, \rT_Z)$ is identified with the tangent space to 
$H \cong \Aut(Z)$. 
By the same reason $\fg \cong \rH^0(\bP, \rT_{\bP})$. 
By Lemma~\ref{lemma-restriction-TPZ} the restriction map 
$\rH^0(\bP, \rT_{\bP}) \to \rH^0(Z, \rT_{\bP} \vert_Z)$ is an isomorphism, 
so we get an isomorphism $\fg \cong \rH^0(\bP, \rT_{\bP}\vert_Z)$. 
The isomorphisms $\fh \cong  \rH^0(Z, \rT_Z)$ 
and $\fg \cong \rH^0(Z, \rT_{\bP} \vert_Z)$ are compatible with the canonical isomorphism 
$\rT_{gH}(G/H) \cong \fg/\fh$, i.e. they fit into a commutative diagram  
\begin{equation*}
\xymatrix{
0 \ar[r] & \fh \ar[r] \ar[d]^-{\wr} & \fg \ar[r] \ar[d]^-{\wr} & \rT_{gH}(G/H) \ar[r] \ar[d]^{\rd_{gH} \gamma}& 0 \\
0 \ar[r] & \rH^0(Z, \rT_Z) \ar[r] & \rH^0(Z, \rT_{\bP} \vert_Z) \ar[r] & \rH^0(Z, \rN_{Z/\bP}) \ar[r] & 0
}
\end{equation*}
So $\rd_{gH} \gamma$ is an isomorphism. 
\end{proof}

\begin{lemma}
\label{lemma-derivative-j}
The map $\rd_{j(s)} \tf' \colon \rT_{j(s)} U' \to \rT_{t} \Hilb^\circ$ is an isomorphism. 
\end{lemma}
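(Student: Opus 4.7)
The plan is to identify both tangent spaces with explicit global section spaces of normal bundles, reduce the claim to a pair of restriction isomorphisms, and then verify these by cohomology vanishings.

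By the standard deformation theory of the Hilbert scheme, since $U' \subset \Hilb_Q \times \Hilb_Q$ is open, there are natural identifications
\[
\rT_{j(s)} U' \cong \rH^0(\Gr_1, \rN_1) \oplus \rH^0(\Gr_2, \rN_2), \qquad
\rT_t \Hilb^\circ \cong \rH^0(X, \rN_{X/\bP}).
\]
Because $X = \Gr_1 \cap \Gr_2$ is a transverse intersection of smooth subvarieties of $\bP$, the normal bundle splits as $\rN_{X/\bP} \cong \rN_1\vert_X \oplus \rN_2\vert_X$. A first-order deformation of a pair $(\Gr_1, \Gr_2)$ induces, via intersection, the corresponding first-order deformation of $X$; under the splitting above, this functoriality identifies $\rd_{j(s)}\tf'$ with the direct sum $r_1 \oplus r_2$ of the restriction maps
\[
r_i \colon \rH^0(\Gr_i, \rN_i) \longrightarrow \rH^0(X, \rN_i\vert_X), \quad i=1,2.
\]

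Hence the lemma reduces to showing each $r_i$ is an isomorphism. From the short exact sequence
\[
0 \to \rN_i \otimes \cI_{X/\Gr_i} \to \rN_i \to \rN_i\vert_X \to 0
\]
on $\Gr_i$, this in turn amounts to the vanishings $\rH^0(\Gr_i, \rN_i \otimes \cI_{X/\Gr_i}) = 0 = \rH^1(\Gr_i, \rN_i \otimes \cI_{X/\Gr_i})$. To establish these, I would resolve $\cO_X$ as an $\cO_{\Gr_i}$-module using the transverse intersection $X = \Gr_i \cap \Gr_j$ (with $j \neq i$), twist by $\rN_i \cong \cQ_i^\vee(2)$, and compute the resulting cohomology on $\Gr_i \cong \Gr(2,5)$ by Borel--Weil--Bott.

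The main technical difficulty is that $\Gr_j$ is not globally cut out by a regular section of a vector bundle on $\Gr_i$, so the Koszul complex cannot be assembled globally in a naive way; one must either pass through a local-to-global argument exploiting the transversality (so that the derived intersection equals the ordinary intersection) or use equivariance to reduce to finitely many weight computations. Once this is handled, the required vanishings are of the same flavor as those invoked in Proposition~\ref{proposition-Ni-stable} (via Lemma~\ref{lemma-Ni-vanishing}) and in the proof of Lemma~\ref{lemma-dj} (via Lemma~\ref{lemma-restriction-TPZ}), and should follow immediately from the Borel--Weil--Bott computations collected in the appendix.
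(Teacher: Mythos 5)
Your reduction is exactly the paper's: identify $\rT_{j(s)}U' \cong \rH^0(\Gr_1,\rN_1)\oplus\rH^0(\Gr_2,\rN_2)$ and $\rT_t\Hilb^\circ \cong \rH^0(X,\rN_1\vert_X)\oplus\rH^0(X,\rN_2\vert_X)$, note that $\rd_{j(s)}\tf'$ is the direct sum of restriction maps, and reduce to showing each restriction $\rH^0(\Gr_i,\rN_i) \to \rH^0(X,\rN_i\vert_X)$ is an isomorphism via vanishing of $\rH^0$ and $\rH^1$ of $\cI_{X/\Gr_i}\otimes\rN_i$. That last step is exactly Lemma~\ref{lemma-restriction-Ni} in the appendix. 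The one place where you depart from the paper is the worry you flag as the ``main technical difficulty'': you are right that $X$ is not a global complete intersection in $\Gr_i$, so no Koszul complex is available, but the fix is neither a local-to-global argument nor an equivariance reduction. Instead, $X \subset \Gr_i$ is a codimension-$3$ Pfaffian subvariety (cut out by the $4\times 4$ Pfaffians of a generic alternating $5\times5$ matrix of linear forms), so by Buchsbaum--Eisenbud it admits the global three-term resolution
\begin{equation*}
0 \to \cO(-5) \to V^\vee\otimes\cO(-3) \to V\otimes\cO(-2) \to \cI_{X/\Gr_i} \to 0
\end{equation*}
recorded as~\eqref{resolution-IX}. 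Tensoring this with $\rN_i \cong \cQ_i^\vee(2)$ and applying the Borel--Weil--Bott vanishings of Lemma~\ref{lemma-cohomology-N} gives the required acyclicity of $\cI_{X/\Gr_i}\otimes\rN_i$ directly, with no need for the workarounds you suggest. Otherwise your argument matches the paper's.
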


\begin{proof}
Write $s = (g_1H, g_2H)$ and let $\Gr_i = g_i\Gr$, so that 
$X = \Gr_1 \cap \Gr_2$ is the {\GPK} threefold corresponding to $s$. 
Let $\rN_i = \rN_{\Gr_i/\bP}$.  
Then since $U' \subset \Hilb_Q \times \Hilb_Q$ is an open subscheme, there is a 
canonical isomorphism 
\begin{equation*}
\rT_{j(s)} U' \cong \rH^0(\Gr_1, \rN_{1}) \oplus \rH^0(\Gr_2, \rN_2). 
\end{equation*}
Similarly, since $\rN_{X/\bP} \cong \rN_{1}\vert_X \oplus \rN_2 \vert_X$, 
there is an isomorphism 
\begin{equation*}
\rT_{t} \Hilb^\circ \cong \rH^0(X, \rN_1\vert_X) \oplus \rH^0(X, \rN_2\vert_X) . 
\end{equation*}
Under the above isomorphisms, the map $\rd_{j(s)} \tf' \colon \rT_{j(s)} U' \to \rT_{t} \Hilb^\circ$ 
is identified with the direct sum of the restriction maps 
$\rH^0(\Gr_i, \rN_i) \to \rH^0(X, \rN_i \vert_X)$. 
Now use Lemma~\ref{lemma-restriction-Ni}. 
\end{proof}

\subsection{Proof of Theorem~\ref{theorem-moduli}} 
\label{subsection-theorem-moduli} 
We have already shown $\cN$ and $\cM$ are 
smooth separated Deligne--Mumford stacks of finite type over $k$ 
(Lemmas~\ref{lemma-N} and~\ref{lemma-M}), so we just need 
to show $f$ is an open immersion. 
The separatedness of $\cN$ guarantees that $f$ is separated. 
By Proposition~\ref{proposition-derivative-f} and the smoothness 
of $\cN$ and $\cM$, the morphism $f$ is \'{e}tale. 
Now the result follows by replacing $\cM$ with the image 
of $f$ and applying \cite[\href{http://stacks.math.columbia.edu/tag/0DUD}{Tag 0DUD}]{stacks-project}, 
whose hypotheses hold by the above observations and Lemma~\ref{lemma-f-points-auts}. 


\section{The infinitesimal structure of $\cN$}
\label{section-infinitesimal-computations} 

In this section, we study the moduli stack $\cN$ infinitesimally.  
Our goal is to prove Theorem~\ref{theorem-faithful-action} from the introduction, 
as well as the following result. 

\begin{proposition} 
\label{proposition-tr-involution}
Let $s \in \cN$ be a point. 
Let $\gamma \in \Aut_{\cN}(s)$ be an element such that the induced map  
$\gamma_* \in \GL(\rT_{s} \cN)$ is an involution. 
Then 
\begin{equation*}
\tr(\gamma_*) \in \set{51, 3, 1, -3, -5, -13, -15, -35} . 
\end{equation*}
\end{proposition}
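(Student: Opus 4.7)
The plan is to combine the explicit presentation of $\rT_s\cN$ from the proofs in \S\ref{subsection-PGL-derivative} with the description of $\Aut_\cN(s)\cong \Aut(X)$ in Lemma~\ref{lemma-aut-group}. Identifying $\rT_s \cN$ with $\rT_{[t]}\cM$ and unwinding the proofs of Lemmas~\ref{lemma-dj} and~\ref{lemma-derivative-j}, the tangent space fits in a natural exact sequence
\begin{equation*}
0 \to \fg \xrightarrow{\Delta} \fg/\fh_1 \oplus \fg/\fh_2 \to \rT_s \cN \to 0,
\end{equation*}
where $\fh_i \subset \fg$ denotes the Lie algebra of $g_i H g_i^{-1}$ and $\Delta$ is the diagonal. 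By Theorem~\ref{theorem-faithful-action}, $\gamma_*^2 = \id$ forces $\gamma^2 = 1$ in $\Aut_\cN(s)$, so by Lemma~\ref{lemma-aut-group} either $\gamma$ lies in $g_1Hg_1^{-1} \cap g_2Hg_2^{-1}$ (Case~1, preserving each Grassmannian) or in $g_2Hg_1^{-1} \cap g_1Hg_2^{-1}$ (Case~2, swapping them). The identity contributes $\tr(\gamma_*) = \dim \rT_s\cN = 51$.

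In Case~1, $\Ad(\gamma)$ preserves each $\fh_i$, so the sequence above is $\gamma$-equivariant and
\begin{equation*}
\tr(\gamma_* \vert \rT_s \cN) = \tr_\fg(\Ad \gamma) - \tr_{\fh_1}(\Ad \gamma) - \tr_{\fh_2}(\Ad \gamma).
\end{equation*}
Writing $\gamma = g_i h_i g_i^{-1}$ and lifting the involution $h_i \in \PGL(V)$ to $\GL(V)$ with eigenspace decomposition of type $(a_i, b_i)$, $a_i+b_i=5$, a direct calculation gives $\tr_{\fh_i}(\Ad\gamma) = (a_i-b_i)^2 - 1$ on $\pgl(V)$, and $\tr_\fg(\Ad\gamma) = ((a_i-b_i)^2 - 5)^2/4 - 1$ on $\pgl(W)$ via the isomorphism $W \cong \wedge^2 V$. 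The nontrivial possibilities $(a_i, b_i) \in \{(4,1),(3,2)\}$ both give $\tr_\fg = 3$, while $\tr_{\fh_i}$ equals $8$ or $0$ respectively. Since $h_1$ and $h_2$ may a priori be of either type, the three resulting combinations yield $\tr(\gamma_*) \in \{3, -5, -13\}$.

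In Case~2, $\Ad(\gamma)$ sends $\fh_1$ isomorphically onto $\fh_2$, so the induced map on $\fg/\fh_1 \oplus \fg/\fh_2$ is off-diagonal and has trace $0$, while it preserves the diagonal $\Delta\fg$ and acts there by $\Ad(\gamma)$. Hence
\begin{equation*}
\tr(\gamma_* \vert \rT_s \cN) = -\tr_\fg(\Ad \gamma).
\end{equation*}
Lifting $\gamma$ to an involution of $W$ with eigenspace dimensions $(p,q)$, $p+q=10$, we have $\tr_\fg(\Ad\gamma) = (p-q)^2 - 1$, so the candidates are $1, -3, -15, -35, -63$ for $|p-q| \in \{0,2,4,6,8\}$; the value $|p-q| = 10$ is excluded because it forces $\gamma$ trivial, which contradicts $\gamma$ swapping two distinct Grassmannians.

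The main obstacle is to rule out $|p-q|=8$ in Case~2, since this is what makes the list finite. My argument will be geometric: when $|p-q|=8$, $\gamma$ is a projective reflection, fixing pointwise a hyperplane $\Pi \subset \bP = \bP(W)$. Pointwise fixity gives $\gamma(\Gr_1 \cap \Pi) = \Gr_1 \cap \Pi$, while on the other hand $\gamma(\Gr_1 \cap \Pi) = \gamma(\Gr_1) \cap \gamma(\Pi) = \Gr_2 \cap \Pi$. Therefore $\Gr_1 \cap \Pi = \Gr_2 \cap \Pi$ is contained in both Grassmannians, hence in $X$. As $\Gr(2,V) \subset \bP^9$ is projectively non-degenerate, this common hyperplane section is pure of dimension $5$, contradicting $\dim X = 3$. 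Combining the three cases gives exactly the eight values in the statement.
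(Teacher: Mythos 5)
Your proposal is correct and follows essentially the same strategy as the paper's Lemma~\ref{lemma-aut-involution}: use Theorem~\ref{theorem-faithful-action} to reduce to $\gamma^2=1$ in $\Aut_{\cN}(s)$, split into the two types of automorphism from Lemma~\ref{lemma-aut-group}, and compute the trace via the presentation $0 \to \fg \to \fg/\fh_1 \oplus \fg/\fh_2 \to \rT_s\cN \to 0$ (your version differs from the paper's only by conjugating the two $\fg/\fh$ summands, which is immaterial). Your formulas $\tr_{\fh_i}(\Ad\gamma)=(a_i-b_i)^2-1$ and $\tr_\fg(\Ad\gamma)=(p-q)^2-1$ are correct, and $\tr(\gamma_*)=1-(p-q)^2$ in Case~2 agrees with the paper's $101-2(p^2+q^2)$.

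The one place you genuinely diverge is in ruling out $|p-q|=8$ (equivalently $\set{p,q}=\set{9,1}$) in the swapping case. The paper observes that the formula $\mult_1((\sigma,a)_*)=76-p^2-q^2$ yields the negative value $-6$, hence this case cannot occur; this argument implicitly uses that a finite-group action on a short exact sequence of vector spaces in characteristic zero splits equivariantly, so the multiset subtraction genuinely computes multiplicities. Your alternative is geometric: such a $\gamma$ would be a projective reflection fixing a hyperplane $\Pi$ pointwise, forcing $\Gr_1\cap\Pi=\Gr_2\cap\Pi$ to lie in $X$, which is impossible since a hyperplane section of the nondegenerate sixfold $\Gr(2,V)$ has dimension $5>3=\dim X$. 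Both arguments are correct; yours has the minor advantage of sidestepping the representation-theoretic bookkeeping and giving a geometric reason for the exclusion, at the cost of being slightly longer. (You should also note, as you do implicitly, that $\set{p,q}=\set{10,0}$ is excluded since a type-II $\gamma$ necessarily satisfies $g_1H\neq g_2H$ and hence $\gamma\neq 1$; and in Case~1 that $\gamma\neq 1$ forces both $h_i$ nontrivial, so the only $\set{a_i,b_i}$ possibilities are $\set{4,1}$ and $\set{3,2}$.)
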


We start in \S\ref{subsection-tangent-space-N} by spelling out an explicit 
presentation of the tangent space to any point \mbox{$s \in \cN$}. 
In \S\ref{subsection-aut-action}, 
we combine this with our description of $\Aut_{\cN}(s)$ 
from Lemma~\ref{lemma-aut-group} to prove some preliminary results, 
by analyzing the eigenvalues of the induced maps on $\rT_{s} \cN$. 
As an easy consequence of our analysis, we prove Theorem~\ref{theorem-faithful-action} 
and Proposition~\ref{proposition-tr-involution}. 

\subsection{An explicit presentation of the tangent space to $\cN$}
\label{subsection-tangent-space-N}
Proposition~\ref{proposition-derivative-f} gives a presentation of the 
tangent spaces to $\cN$. 
To make this explicit, we start 
with some preliminary remarks. 

For any $g \in G$ there is an isomorphism 
\begin{equation}
\label{TGH}
\fg/\fh \cong \rT_{gH}(G/H) . 
\end{equation}
If we regard $\rT_{gH}(G/H)$ as the set of $k[\vepsilon]/(\vepsilon^2)$-points of $G/H$ 
based at $gH$, 
then this identification is induced by the map  
\begin{align*}
\eta_g \colon \fg & \to \rT_{gH}(G/H) \\ 
R & \mapsto g(1 + \vepsilon R)H . 
\end{align*}
As the notation indicates, the identification~\eqref{TGH} depends on 
the choice of representative for the coset $gH \in G/H$.  
Namely, suppose $gH = g'H$. 
Then there is a commutative diagram 
\begin{equation*}
\xymatrix{
\fg \ar[r]^{\Ad_{(g')^{-1}g}} \ar[d]_{\eta_g} & \fg \ar[d]^{\eta_{g'}} \\ 
\rT_{gH}(G/H) \ar@{=}[r] & \rT_{g'H}(G/H)
}
\end{equation*}
Here, for any $a \in G$ we use the notation $\Ad_{a} \colon \fg \to \fg$ for 
the action of $a$ under the adjoint representation, i.e. $\Ad_a(R) = aRa^{-1}$.  
This follows from the computation  
\begin{equation*}
g(1+ \vepsilon R)H = (1+ \vepsilon gRg^{-1})gH = (1+\vepsilon gRg^{-1})g'H 
= g'(1+\vepsilon (g')^{-1}gRg^{-1}g')H. 
\end{equation*}

Next we note that the group $G$ acts on $G/H$ on the left. 
For $gH \in G/H$ the derivative at $1 \in G$ of the action morphism 
$\act_{gH} \colon G \to G/H$, $\act_{gH}(a) = agH$, gives a map 
\begin{equation*}
\fg \to \rT_{gH}(G/H). 
\end{equation*}
Under the identification~\eqref{TGH}, this map takes the form 
\begin{align*}
\fg & \to \fg/\fh  \\
S & \mapsto g^{-1} S g. 
\end{align*}
Indeed, this follows from the observation  
\begin{equation*}
(1+\vepsilon S)gH = g(1 + \vepsilon g^{-1} Sg)H. 
\end{equation*}

Combined with Proposition~\ref{proposition-derivative-f}, the above discussion 
gives the following. 

\begin{lemma}
\label{lemma-presentation-TN}
Let $(g_1, g_2) \in G \times G$ be such that $[(g_1H, g_2H)] \in \cN$. 
\begin{enumerate}
\item There is a short exact sequence 
\begin{equation}
\label{Tg1Hg2H}
0 \to \fg  \to
\fg/\fh \oplus \fg/\fh  \to \rT_{[(g_1H, g_2H)]}\cN \to 0  
\end{equation}
where the first map is given by 
\begin{equation}
\begin{aligned}
\label{S-map}
\fg  & \to \fg/\fh \oplus \fg/\fh \\
S    & \mapsto  (g_1^{-1}Sg_1, g_2^{-1}Sg_2) . 
\end{aligned}
\end{equation}

\item 
\label{presentation-TN-functorial}
The sequence~\eqref{Tg1Hg2H} depends on the choice 
of representatives $g_1, g_2$ for the cosets $g_1H, g_2H$, but 
is canonical in the following sense. If $g_1H = g'_1H$ and $g_2 H = g'_2H$, 
then there is a commutative diagram of exact sequences 
\begin{equation*}
\xymatrix{
0 \ar[r] & \fg  \ar[rr] \ar[d] && \fg/\fh \oplus \fg/\fh  \ar[rr] \ar[d]_{(\Ad_{(g'_1)^{-1}g_1} , }^{\Ad_{(g'_2)^{-1}g_2})} && \rT_{[(g_1H, g_2H)]}\cN \ar[r] \ar@{=}[d] & 0 \\
0 \ar[r] & \fg  \ar[rr] && \fg/\fh \oplus \fg/\fh  \ar[rr] && \rT_{[(g'_1H, g'_2H)]}\cN \ar[r] & 0
}
\end{equation*}
\end{enumerate}
\end{lemma}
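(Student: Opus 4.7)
The plan is to unpack Proposition~\ref{proposition-derivative-f} using the explicit identifications established earlier in Section~\ref{subsection-tangent-space-N}. Setting $s = (g_1H, g_2H) \in U$, Proposition~\ref{proposition-derivative-f} yields the short exact sequence
$$0 \to \fg \xrightarrow{\rd_1 \act_s} \rT_s U \to \rT_{[s]}\cN \to 0.$$
Since $U$ is open in $G/H \times G/H$, I would first identify $\rT_s U \cong \rT_{g_1H}(G/H) \oplus \rT_{g_2H}(G/H)$, and then apply $\eta_{g_1}^{-1} \oplus \eta_{g_2}^{-1}$ to rewrite the middle term as $\fg/\fh \oplus \fg/\fh$.

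Next, observing that $\act_s$ factors as the product $(\act_{g_1H}, \act_{g_2H}) \colon G \to G/H \times G/H$, I would compute $\rd_1 \act_s$ componentwise. The preliminary calculation $(1+\vepsilon S)g_iH = g_i(1 + \vepsilon g_i^{-1}Sg_i)H$ identifies each $\rd_1 \act_{g_iH}$ with $S \mapsto g_i^{-1}Sg_i \bmod \fh$ under $\eta_{g_i}$, giving precisely the map~\eqref{S-map}. Combined with the previous step, this establishes part~(1).

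For part~(2), the target $\rT_{[(g_1H, g_2H)]}\cN = \rT_{[(g'_1H, g'_2H)]}\cN$ is intrinsic to the stack point, so the rightmost vertical arrow is forced to be the identity. The compatibility diagram for $\eta_{g_i}$ and $\eta_{g'_i}$ when $g_iH = g'_iH$, derived just before the lemma, shows that the middle vertical arrow is $(\Ad_{(g'_1)^{-1}g_1}, \Ad_{(g'_2)^{-1}g_2})$. The left vertical arrow must then be the identity on $\fg$, which is confirmed by the direct verification that $\Ad_{(g'_i)^{-1}g_i}(g_i^{-1}Sg_i) = (g'_i)^{-1}Sg'_i$ using $\Ad_a(R) = aRa^{-1}$.

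There is no substantive obstacle here; the entire proof amounts to bookkeeping, assembling ingredients already in place. The one place requiring genuine care is keeping track of left-versus-right conventions in the group action and in the direction of the conjugations, since a sign or order error would immediately break the compatibility asserted in~(2).
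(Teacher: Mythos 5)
Your proposal is correct and follows exactly the approach the paper intends: the paper states the lemma immediately after laying out the identifications $\eta_g \colon \fg/\fh \cong \rT_{gH}(G/H)$, their $\Ad$-compatibility under change of coset representative, and the formula $S \mapsto g^{-1}Sg$ for $\rd_1 \act_{gH}$, and says this "combined with Proposition~\ref{proposition-derivative-f} gives the following." Your assembly of those ingredients, including the verification that $\Ad_{(g'_i)^{-1}g_i}(g_i^{-1}Sg_i) = (g'_i)^{-1}Sg'_i$ forces the left vertical arrow in part (2) to be the identity, is precisely the bookkeeping the authors had in mind.
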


In particular, suppose 
$[(g_1H, g_2H)], [(g'_1H, g'_2H)] \in \cN$. 
Then to specify a map 
\begin{equation*}
\alpha \colon \rT_{[(g_1H, g_2H)]}\cN \to \rT_{[(g'_1H, g'_2H)]}\cN 
\end{equation*} 
it suffices to specify a map 
\begin{equation*}
\beta \colon \fg \oplus \fg \to \fg \oplus \fg
\end{equation*}
which preserves the subspace 
$\fh \oplus \fh \subset \fg \oplus \fg$ and the image of the map $\fg \to \fg/\fh \oplus \fg/\fh$ in~\eqref{S-map}. 
To indicate this situation, 
we will simply say \emph{$\alpha$ is induced by the map $\beta$}. 
We emphasize once again that this notion depends on the choice of 
representatives $g_1,g_2,g'_2, g'_2$, which we regard as being 
made implicitly in the notation for the domain and target of $\alpha$. 

\subsection{Eigenvalue analysis}
\label{subsection-aut-action}  

Recall that $\cN$ is defined as the quotient of an open subscheme $U \subset G/H \times G/H$ 
by the group $\bZ/2 \times G$. 
Hence, letting $\sigma \in \bZ/2$ denote the generator,  
for any point $[(g_1H, g_2H)] \in \cN$ and $a \in G$ there are corresponding isomorphisms 
\begin{align*}
\gamma_{(1, a)} & \colon [(g_1H, g_2H)] \xrightarrow{\sim} [(ag_1H, ag_2H)], \\
\gamma_{(\sigma, a)} & \colon [(g_1H, g_2H)] \xrightarrow{\sim} [(ag_2H, ag_1H)]. 
\end{align*} 
These isomorphisms of points of $\cN$ induce isomorphisms of tangent spaces, which 
we denote respectively by 
\begin{align*}
a_*  & \colon \rT_{[(g_1H, g_2H)]}\cN \to \rT_{[(ag_1H, ag_2H)]}\cN , \\
(\sigma,a)_* & \colon \rT_{[(g_1H, g_2H)]}\cN \to \rT_{[(ag_2H, ag_1H)]}\cN .
\end{align*}
We also simply write $\sigma_*$ for $(\sigma, 1)_*$. 
The next lemma follows immediately by unwinding the definitions. 

\begin{lemma}
\label{lemma-a-sigma-action}
Let $(g_1, g_2) \in G \times G$ be such that $[(g_1H, g_2H)] \in \cN$. 
\begin{enumerate}
\item 
\label{lemma-a-action}
The map  
\begin{equation*}
a_*   \colon \rT_{[(g_1H, g_2H)]}\cN \to \rT_{[(ag_1H, ag_2H)]}\cN
\end{equation*}
is induced by the identity map 
\begin{equation*}
\id \colon \fg \oplus \fg  \to \fg \oplus \fg. 
\end{equation*}

\item 
\label{lemma-sigma-action}
The map 
\begin{equation*}
\sigma_* \colon \rT_{[(g_1H, g_2H)]}\cN \to \rT_{[(g_2H, g_1H)]}\cN 
\end{equation*}
is induced by the transposition map 
\begin{align*}
\fg \oplus \fg & \to \fg \oplus \fg \\
(R_1, R_2) & \mapsto (R_2, R_1) .  
\end{align*}
\end{enumerate}
\end{lemma}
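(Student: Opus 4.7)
The plan is to unwind the definitions of the two isomorphisms at the level of the presenting scheme $U \subset G/H \times G/H$ and transport the result through the presentation of $\rT_{[(g_1H, g_2H)]}\cN$ supplied by Lemma~\ref{lemma-presentation-TN}. At the level of $U$, the two isomorphisms are tautological: $\gamma_{(1,a)}$ is induced by left multiplication by $a$ on each factor, while $\gamma_{(\sigma,1)}$ is induced by swapping the two factors.

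For part (1), I would use the parametrization $\eta_g(R) = g(1+\vepsilon R)H$ from \S\ref{subsection-tangent-space-N} to identify $\rT_{gH}(G/H)$ with $\fg/\fh$. Left multiplication by $a$ then sends $\eta_g(R) = g(1+\vepsilon R)H$ to $ag(1+\vepsilon R)H = \eta_{ag}(R)$, so on each copy of $\fg/\fh$ the derivative is the identity. It remains to check that the identity on $\fg \oplus \fg$ is compatible with the quotient presentations of the two tangent spaces; this reduces to the observation that the image of~\eqref{S-map} at $(g_1H, g_2H)$, namely $\{(g_1^{-1}Sg_1, g_2^{-1}Sg_2) : S \in \fg\}$, coincides as a subset of $\fg \oplus \fg$ with the corresponding image at $(ag_1H, ag_2H)$ via the bijection $S \mapsto aSa^{-1}$ on $\fg$.

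For part (2), the swap morphism on $U$ visibly sends $(\eta_{g_1}(R_1), \eta_{g_2}(R_2))$ to $(\eta_{g_2}(R_2), \eta_{g_1}(R_1))$, so its derivative on $\fg/\fh \oplus \fg/\fh$ is the transposition $(R_1, R_2) \mapsto (R_2, R_1)$. The compatibility with the quotient presentations is even more direct: transposition carries $\{(g_1^{-1}Sg_1, g_2^{-1}Sg_2) : S \in \fg\}$ onto $\{(g_2^{-1}Sg_2, g_1^{-1}Sg_1) : S \in \fg\}$, which is exactly the image of~\eqref{S-map} attached to $(g_2H, g_1H)$.

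I do not anticipate any serious obstacle; the lemma is essentially a bookkeeping check. The only subtlety is keeping track of which coset representatives are used in the identification $\rT_{gH}(G/H) \cong \fg/\fh$, and the functoriality statement in Lemma~\ref{lemma-presentation-TN}(\ref{presentation-TN-functorial}) confirms that the end result is independent of these choices.
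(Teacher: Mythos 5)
Your proposal is correct and takes the same route as the paper, which simply states that the lemma ``follows immediately by unwinding the definitions''; you have carried out that unwinding explicitly and accurately, including the check that the identity and transposition maps on $\fg\oplus\fg$ carry the image of~\eqref{S-map} for $(g_1,g_2)$ to its counterpart for $(ag_1,ag_2)$ and $(g_2,g_1)$ respectively.
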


Below we will be concerned with automorphisms of a point $[(g_1H, g_2H)] \in \cN$. 
As observed in Lemma~\ref{lemma-f-points-auts}, the automorphism group of 
$[(g_1H, g_2H)] \in \cN$ coincides with that of $[X_{g_1,g_2}] \in \cM$. 
This latter group consists of elements of two types, according to Lemma~\ref{lemma-aut-group}. 
We turn this into a definition: 

\begin{definition}
\label{definition-aut-types}
Let $(g_1, g_2) \in G \times G$ be such that $s = [(g_1H, g_2H)] \in \cN$, 
i.e. such that $X_{g_1, g_2}$ is a {\GPK} threefold. 
We say an automorphism $a \in G$ of $X_{g_1, g_2}$ is: 
\begin{enumerate}
\item of \emph{type I} 
if $a \in g_1 H g_1^{-1} \cap g_2 H g_2^{-1}$; 

\item of \emph{type II} 
if $a \in g_2 H g_1^{-1} \cap g_1 H g_2^{-1}$. 
\end{enumerate}
We say $\gamma \in \Aut_{\cN}(s)$ is of \emph{type I} or \emph{type II} 
according to the type of the corresponding element of $\Aut_{\cM}([X_{g_1,g_2}])$ 
under the isomorphism $\Aut_{\cN}(s) \cong \Aut_{\cM}([X_{g_1,g_2}])$. 
\end{definition}

In the situation of Definition~\ref{definition-aut-types}, 
the automorphism of $[(g_1H, g_2H)]$ corresponding to $a$ 
is $\gamma_{(1,a)}$ from above if $a$ is of type I, 
and is $\gamma_{(\sigma, a)}$ if $a$ is of type II. 
Via the isomorphism $\rT_{[(g_1H, g_2H)]}\cN \cong \rT_{[X_{g_1,g_2}]} \cM$
from Proposition~\ref{proposition-derivative-f}, the induced 
map 
\begin{equation*}
a_*   \colon \rT_{[X_{g_1,g_2}]} \cM \to \rT_{[X_{g_1,g_2}]}  \cM
\end{equation*}
is identified with 
\begin{alignat*}{2}
a_*  & \colon \rT_{[(g_1H, g_2H)]}\cN \to \rT_{[(g_1H, g_2H)]}\cN & \quad & \text{ if $a$ is of type I} , \\
(\sigma,a)_* & \colon \rT_{[(g_1H, g_2H)]}\cN \to \rT_{[(g_1H, g_2H)]}\cN & \quad & \text{ if $a$ is of type II} . 
\end{alignat*}
In the following lemma, we describe these maps explicitly. 

\begin{lemma}
\label{lemma-aut-action}
Let $(g_1, g_2) \in G \times G$ be such that $[(g_1H, g_2H)] \in \cN$. 
Let $a \in G$ be an automorphism of $X_{g_1,g_2}$. 
\begin{enumerate}
\item 
\label{type-I-action}
If $a$ is of type I, then the map 
\begin{equation*}
a_* \colon \rT_{[(g_1H, g_2H)]}\cN \to \rT_{[(g_1H, g_2H)]}\cN 
\end{equation*} 
is induced by the map 
\begin{align*}
\fg \oplus \fg & \to \fg \oplus \fg \\
(R_1, R_2) & \mapsto 
((g_1^{-1}ag_1)R_1(g_1^{-1}ag_1)^{-1}, 
(g_2^{-1}ag_2)R_2(g_2^{-1}ag_2)^{-1}) . 
\end{align*} 

\item 
\label{type-II-action}
If $a$ is of type II, then the map 
\begin{equation*}
(\sigma,a)_* 
\colon \rT_{[(g_1H, g_2H)]}\cN \to \rT_{[(g_1H, g_2H)]}\cN 
\end{equation*} 
is induced by the map 
\begin{align*}
\fg \oplus \fg & \to \fg \oplus \fg \\
(R_1, R_2) & \mapsto 
((g_1^{-1}ag_2)R_2(g_1^{-1}ag_2)^{-1}, 
(g_2^{-1}ag_1)R_1(g_2^{-1}ag_1)^{-1}) . 
\end{align*} 
\end{enumerate}
\end{lemma}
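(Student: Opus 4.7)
The plan is to derive both formulae by combining the two preparatory results that precede the statement: Lemma~\ref{lemma-a-sigma-action}, which computes $a_*$ and $\sigma_*$ but at the price of changing base presentation, and Lemma~\ref{lemma-presentation-TN}\eqref{presentation-TN-functorial}, which relates two presentations of the tangent space at a single point by an adjoint twist. Once the bookkeeping of representatives is set up correctly, both claims drop out by composition.

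First I would treat type I. The condition $a\in g_1Hg_1^{-1}\cap g_2Hg_2^{-1}$ is equivalent to $ag_iH=g_iH$ for $i=1,2$, so $\gamma_{(1,a)}$ is an automorphism of $s=[(g_1H,g_2H)]$. By Lemma~\ref{lemma-a-sigma-action}\eqref{lemma-a-action}, the induced map $a_*$ is represented by the identity of $\fg\oplus\fg$, with the source tangent space presented using $(g_1,g_2)$ and the target presented using $(ag_1,ag_2)$. I would then apply Lemma~\ref{lemma-presentation-TN}\eqref{presentation-TN-functorial} to go from the presentation with representatives $(ag_1,ag_2)$ back to the presentation with representatives $(g_1,g_2)$; this change of presentation acts on $\fg\oplus\fg$ by $(\Ad_{g_1^{-1}ag_1},\Ad_{g_2^{-1}ag_2})$. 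Composing with the identity yields the stated formula. Observe in passing that since $a\in g_iHg_i^{-1}$ we have $g_i^{-1}ag_i\in H$, so $\Ad_{g_i^{-1}ag_i}$ indeed preserves $\fh$, and a quick check shows it also preserves the image of the map~\eqref{S-map}, so the formula descends to the quotient $\rT_s\cN$.

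For type II, the condition $a\in g_2Hg_1^{-1}\cap g_1Hg_2^{-1}$ gives $ag_1H=g_2H$ and $ag_2H=g_1H$, so $\gamma_{(\sigma,a)}$ is again an automorphism of $s$. I would factor $\gamma_{(\sigma,a)}=\gamma_{(1,a)}\circ\gamma_{(\sigma,1)}$ and compute the induced map on tangent spaces in two steps: first Lemma~\ref{lemma-a-sigma-action}\eqref{lemma-sigma-action} identifies $\sigma_*$ with the transposition $(R_1,R_2)\mapsto(R_2,R_1)$, sending the presentation via $(g_1,g_2)$ to the presentation via $(g_2,g_1)$, and then Lemma~\ref{lemma-a-sigma-action}\eqref{lemma-a-action} identifies the subsequent $a_*$ with the identity, landing in the presentation via $(ag_2,ag_1)$. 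Lemma~\ref{lemma-presentation-TN}\eqref{presentation-TN-functorial} with old representatives $(ag_2,ag_1)$ and new representatives $(g_1,g_2)$ introduces the twist $(\Ad_{g_1^{-1}ag_2},\Ad_{g_2^{-1}ag_1})$, and composing with the transposition yields the displayed formula. The type II hypothesis guarantees $g_1^{-1}ag_2,\,g_2^{-1}ag_1\in H$, so again the formula is well-defined on the quotient.

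There is no genuine obstacle here: the entire argument is diagram-chasing within the presentations provided by Lemma~\ref{lemma-presentation-TN}. The only point requiring care is keeping straight, at each stage, which pair of representatives is being used to present the relevant tangent space, so that the correct direction of the change-of-presentation isomorphism is applied.
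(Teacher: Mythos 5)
Your proof is correct and follows essentially the same route as the paper: Lemma~\ref{lemma-a-sigma-action} gives $a_*$ and $\sigma_*$ as identity/transposition between shifted presentations, and Lemma~\ref{lemma-presentation-TN}\eqref{presentation-TN-functorial} supplies the adjoint twist needed to return to the original representatives. The paper only says ``part~\eqref{type-II-action} follows similarly,'' so your explicit factorization $\gamma_{(\sigma,a)}=\gamma_{(1,a)}\circ\gamma_{(\sigma,1)}$ is a small but welcome elaboration, not a different method.
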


\begin{proof}
By Lemma~\ref{lemma-a-sigma-action}\eqref{lemma-a-action} the map 
\begin{equation*}
a_* \colon \rT_{[(g_1H, g_2H)]}\cN \to \rT_{[(ag_1H, ag_2H)]}\cN 
\end{equation*}
is induced by the identity $\id \colon \fg \oplus \fg \to \fg \oplus \fg$. 
If $a$ is of type I, then we have $ag_1H = g_1H$ and $ag_2H = g_2H$. 
Hence~\eqref{type-I-action} follows from the comparison between the 
presentations for $\rT_{[(ag_1H, ag_2H)]}$ and $\rT_{[(g_1H, g_2H)]}$  
given by Lemma~\ref{lemma-presentation-TN}\eqref{presentation-TN-functorial}. 
Part~\eqref{type-II-action} follows similarly. 
\end{proof}

Set $\tH = \GL(V)$, $\tG = \GL(W)$, $\tfh = \gl(V)$, and $\tfg = \gl(W)$, 
so that we have commutative diagrams 
\begin{equation}
\vcenter{
\label{HGhg}
\xymatrix{
\tH \ar[r] \ar[d] & \tG  \ar[d]  & \quad &  \tfh \ar[r] \ar[d] & \tfg \ar[d]  \\
H \ar[r] & G & \quad & \fh \ar[r] & \fg 
}
}
\end{equation}
where the rows are embeddings and the columns surjections with $1$-dimensional kernels. 
In the following proofs, at certain points it will be convenient to work with the 
spaces in the top row. 
For readability, we commit the following abuse of notation: given $g \in G$ we  
use the same symbol to denote a fixed lift $g \in \tG$; similarly for $H$ and $\tH$; 
and we choose our lifts compatibly, i.e. if $g \in G$ 
is the image of $g_0 \in H$ under $H \to G$, we choose a lift of $g_0$ that maps 
to $g$. 

\begin{lemma}
\label{lemma-aut-action-CY}
Let $(g_1, g_2) \in G \times G$ be such that $[(g_1H, g_2H)] \in \cN$. 
Let $a \in G$ be an automorphism of $X_{g_1,g_2}$. 
\begin{enumerate}
\item \label{aut-action-I}
If $a$ is of type I and the map 
\begin{equation*}
a_* \colon \rT_{[(g_1H, g_2H)]}\cN \to \rT_{[(g_1H, g_2H)]}\cN 
\end{equation*} 
is the identity, then $a = 1$. 

\item \label{aut-action-II}
If $a$ is of type II and the map 
\begin{equation*}
(\sigma,a)_* \colon \rT_{[(g_1H, g_2H)]}\cN \to \rT_{[(g_1H, g_2H)]}\cN 
\end{equation*} 
is an involution, then $a^2 = 1$. 
\end{enumerate}
\end{lemma}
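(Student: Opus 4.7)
My plan is to prove part~(1) by translating the hypothesis into an intrinsic condition on $\Ad_a$ and then running an eigenvalue/dimension argument; part~(2) will follow immediately by squaring.

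For~(1), I first make the hypothesis intrinsic. By Lemma~\ref{lemma-aut-action}\eqref{type-I-action}, writing $h_i = g_i^{-1} a g_i \in H$, the map $a_*$ on $\rT_{[(g_1H, g_2H)]} \cN$ is induced on $\fg \oplus \fg$ by $(R_1, R_2) \mapsto (\Ad_{h_1} R_1, \Ad_{h_2} R_2)$. Saying $a_* = \id$ is saying that for every $(R_1, R_2)$ there exists $S \in \fg$ with $\Ad_{h_i} R_i - R_i \equiv g_i^{-1} S g_i \pmod{\fh}$ for $i = 1, 2$. Applying $\Ad_{g_i}$ to each equation and eliminating the common $S$ produces the intrinsic condition
\[
(\star)\qquad (\Ad_a - \id)(\fg) \;\subseteq\; g_1 \fh g_1^{-1} + g_2 \fh g_2^{-1}.
\]

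Next, I extract an eigenvalue constraint from~$(\star)$. The group $g_1 H g_1^{-1} \cap g_2 H g_2^{-1}$ is a subgroup of the finite group $\Aut(X_{g_1, g_2})$ (Lemma~\ref{lemma-aut-group}), so the intersection $g_1 \fh g_1^{-1} \cap g_2 \fh g_2^{-1}$ vanishes and the right-hand side of~$(\star)$ has dimension~$48$. The semisimple operator $\Ad_a$ on~$\fg$ must therefore have $1$-eigenspace of dimension at least~$51$. Lifting $h_1$ to $\tilde h_1 \in \GL(V)$ with eigenvalues $\alpha_1, \ldots, \alpha_5$, the eigenvalues of $a$ on $W = \wedge^2 V$ are the multiset $\{\alpha_i \alpha_j : i < j\}$, so the $1$-multiplicity of $\Ad_a$ on $\fg$ equals $\sum_\mu m(\mu)^2 - 1$ with $m(\mu) = \#\{(i, j) : i < j, \, \alpha_i \alpha_j = \mu\}$. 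A case analysis over the partitions of~$5$ and the coincidences $\alpha_i \alpha_j = \alpha_k \alpha_l$ they permit shows this quantity is at least~$51$ only in two cases: either all $\alpha_i$ agree and $a = 1$, or the multiset $\{\alpha_i \alpha_j\}$ takes exactly two distinct values with multiplicities $(6, 4)$ (realised either by a $(4, 1)$-partition of the $\alpha_i$, or by a $(3, 2)$-partition with $\alpha = -\beta$).

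To exclude the second case, I note that there both sides of~$(\star)$ have dimension~$48$, so the inclusion becomes an equality. But $\image(\Ad_a - \id)$ is the sum of the non-$1$ eigenspaces of the semisimple $\Ad_a$, while the summand $g_1 \fh g_1^{-1}$ is $\Ad_a$-invariant and contains a nonzero $1$-eigenspace of $\Ad_a$: this $1$-eigenspace is (conjugate to) the Lie algebra of the centralizer of the nontrivial element $h_1$ in the simple group $H = \PGL(V)$, which is proper but positive-dimensional in~$\fh$. This contradicts the equality, forcing $a = 1$.

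For~(2), since $\bZ/2 \times G$ is a direct product, $(\sigma, a)^2 = (1, a^2)$, and hence $(\sigma, a)_*^2 = (a^2)_*$ on $\rT_s \cN$. The involution hypothesis gives $(a^2)_* = \id$; since $a^2$ is of type~I, part~(1) yields $a^2 = 1$. The main obstacle is the case analysis in the second step above, where one must track all coincidences among the ten products $\alpha_i \alpha_j$: most partitions of~$5$ are excluded on purely dimensional grounds, but the $(6, 4)$ profile really does occur for both $(4, 1)$ and $(3, 2)$ on~$V$ and must be closed off by the centralizer argument rather than by a naive dimension count.
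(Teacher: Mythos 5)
Your argument is correct, and it follows a genuinely different route from the paper's. The paper works directly with the eigenvalues of the map $a_*$ on the $51$-dimensional tangent space: after reducing to $g_1 = 1$, $g_2 = g$, it introduces two lifts $a_0, b_0 \in \tH$ of $a$ and $b = g^{-1}ag$ with eigenvalues $\lambda_i$ and $\mu_i$, assembles the multiset of eigenvalues of $a_*$ via the presentation \eqref{a-I-presentation}, and then uses the key identity $\set{\lambda_i\lambda_j} = \set{\mu_i\mu_j}$ (coming from conjugacy of $a$ and $b$) together with a doubling trick and the observation that each ratio $\lambda_i/\lambda_k$ occurs at least three times among $\set{\lambda_i\lambda_j/\lambda_k\lambda_\ell}$. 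This directly pins down the support and forces all $\lambda_i$ to coincide, with no residual boundary cases. You instead convert the hypothesis $a_* = \id$ into the intrinsic inclusion $(\star)\colon (\Ad_a - \id)(\fg) \subseteq g_1\fh g_1^{-1} + g_2\fh g_2^{-1}$, use finiteness of $\Aut(X_{g_1,g_2})$ to see the right side has dimension exactly $48$, deduce the $1$-eigenspace of $\Ad_a$ on $\fg$ has dimension at least $51$, and run a partition-of-$5$ case analysis on $\sum_\mu m(\mu)^2$. Your case analysis leaves two boundary configurations achieving equality, which you then close off with the centralizer observation: $g_1\fh g_1^{-1}$ is $\Ad_a$-stable and contains a positive-dimensional $1$-eigenspace (the Lie algebra of $Z_H(h_1)$), which cannot sit inside $\image(\Ad_a - \id)$ when $\Ad_a$ is semisimple. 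What the paper's approach buys is that it never has a tight boundary case to exclude; what yours buys is a cleaner, presentation-free statement of the hypothesis and a conceptually simpler eigenvalue count (using only a single lift and $\Ad_a$ on $\fg$ rather than both lifts and the quotient presentation). Your handling of part~(2) by squaring $(\sigma,a)$ to $(1, a^2)$ is identical to the paper's.
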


\begin{proof}
Note that there is an isomorphism $[(g_1H, g_2H)] \cong [(H, g_1^{-1}g_2H)] \in \cN$.  
Hence we may assume $g_1 = 1$ and $g_2 = g$. 
Further, note that $a \in G$ must have finite order by Lemma~\ref{lemma-aut-group}, 
so in particular $a$ is diagonalizable. 
Since the square of an automorphism of type II is of type I, 
\eqref{aut-action-II} follows from \eqref{aut-action-I}. 

So assume $a$ is of type I, i.e. $a \in H \cap gHg^{-1} \subset G$. 
Then by Lemma~\ref{lemma-aut-action}\eqref{type-I-action} the map 
$a_* \colon \rT_{[(H, gH)]}\cN \to \rT_{[(H, gH)]}\cN$ 
is induced by the map
\begin{equation} 
\label{a-I-action}
\begin{aligned}
\fg \oplus \fg & \to \fg \oplus \fg \\
(R_1, R_2) & \mapsto 
(aR_1a^{-1}, (g^{-1}ag)R_2(g^{-1}ag)^{-1}) . 
\end{aligned}
\end{equation} 
Recall that this means that in terms of the presentation 
\begin{equation}
\label{a-I-presentation}
\begin{array}{ccccccccc}
0 & \to &  \fg & \to &  \fg/\fh \oplus \fg/\fh & \to & \rT_{[(H, gH)]}\cN & \to &  0 \\ 
&& S & \mapsto &   (S, g^{-1}Sg) &  & & &
\end{array}
\end{equation}
given by \eqref{Tg1Hg2H}, $a_*$ is induced by \eqref{a-I-action}. 

Let $a_0 \in H$ be the element whose image under the embedding $H \to G$ is $a$. 
Let $\lambda_i, 1 \leq i \leq 5$, be the eigenvalues of $a_0$. 
Then $\lambda_i\lambda_j, 1 \leq i < j \leq 5$, are the eigenvalues of $a$; 
we must show they are all equal if $a_*$ is the identity. 
We start by computing the eigenvalues of $a_*$ in terms of the $\lambda_i$. 
We do so by computing the eigenvalues on each summand in $\fg/\fh \oplus \fg/\fh$ 
and on the subspace $\fg \subset \fg/\fh \oplus \fg/\fh$ separately: 

\medskip \noindent
\textit{The first $\fg/\fh$ summand}. Consider the map $\tfg \to \tfg$ given by $R \mapsto aRa^{-1}$. It  
has eigenvalues 
\begin{equation*}
\frac{\lambda_i\lambda_j}{\lambda_k \lambda_{\ell}}, ~ 1 \leq i < j \leq 5, \, 1 \leq k < \ell \leq 5, 
\end{equation*} 
and similarly the induced map $\tfh \to \tfh$ has eigenvalues 
\begin{equation*}
\frac{\lambda_i}{\lambda_k} , ~ 1 \leq i,k \leq 5.  
\end{equation*}
Hence the induced map $\tfg/\tfh \to \tfg/\tfh$, which coincides with the induced 
map $\fg/\fh \to \fg/\fh$, has eigenvalues given by the multiset difference 
\begin{equation}
\label{multiset-1}
\set{ \frac{\lambda_i\lambda_j}{\lambda_k \lambda_{\ell}} \st 1 \leq i < j \leq 5, \, 1 \leq k < \ell \leq 5} 
- 
\set{\frac{\lambda_i}{\lambda_k} \st 1 \leq i,k \leq 5} . 
\end{equation}

\medskip \noindent
\textit{The second $\fg/\fh$ summand}.
Let $b = g^{-1}ag \in G$. By assumption $b$ is in the image of the embedding $H \to G$; 
let $b_0$ be its preimage. 
Let $\mu_i, 1 \leq i \leq 5$, be the eigenvalues of $b_0$, so that 
$\mu_i\mu_j, 1 \leq i < j \leq 5$, are the eigenvalues of $b$. 
Note that we have an equality of multisets 
\begin{equation}
\label{multiset-2}
\set{ \mu_i\mu_j \st 1 \leq i < j \leq 5 } = 
\set{ \lambda_i\lambda_j \st 1 \leq i < j \leq 5} , 
\end{equation}
but the multisets $\set{\mu_i \st1 \leq i \leq 5}$ and 
$\set{\lambda_i \st 1 \leq i \leq 5}$ need not coincide. 
The above argument shows that the map $\fg/\fh \to \fg/\fh$ induced 
by $R \mapsto bRb^{-1}$ has eigenvalues given by the multiset 
\begin{equation}
\label{multiset-3}
\set{ \frac{\mu_i\mu_j}{\mu_k \mu_{\ell}} \st 1 \leq i < j \leq 5, \, 1 \leq k < \ell \leq 5} 
- 
\set{\frac{\mu_i}{\mu_k} \st 1 \leq i,k \leq 5} . 
\end{equation}

\medskip \noindent
\textit{The subspace $\fg \subset \fg/\fh \oplus \fg/\fh$}.
The map~\eqref{a-I-action} induces the 
map $S \mapsto a S a^{-1}$ on the copy of $\fg \subset \fg/\fh \oplus \fg/\fh$ 
embedded as in~\eqref{a-I-presentation}. 
The above argument shows this map has eigenvalues given by the multiset 
\begin{equation}
\label{multiset-4}
\set{ \frac{\lambda_i\lambda_j}{\lambda_k \lambda_{\ell}} \st 1 \leq i < j \leq 5, \, 1 \leq k < \ell \leq 5 } - 
\set{1} ,
\end{equation}
where we have removed a single $1$ eigenvalue corresponding to the kernel of $\tfg \to \fg$. 

\medskip \noindent
\textit{The eigenvalues of $a_*$}. 
Combining all of the above, we conclude that the eigenvalues 
of $a_*$ are given by the multiset sum of~\eqref{multiset-1} 
and~\eqref{multiset-3} minus~\eqref{multiset-4}, i.e. 
by 
\begin{equation}
\label{multiset-eigenvalues}
\set{1} + 
\set{ \frac{\mu_i\mu_j}{\mu_k \mu_{\ell}} \st 1 \leq i < j \leq 5, \, 1 \leq k < \ell \leq 5} 
- 
\set{\frac{\lambda_i}{\lambda_k} \st 1 \leq i,k \leq 5}
- 
\set{\frac{\mu_i}{\mu_k} \st 1 \leq i,k \leq 5} .  
\end{equation}

\medskip 
Recall that to finish we need to show that if the support of~\eqref{multiset-eigenvalues} is $\set{1}$, 
then the $\lambda_i$ coincide. 
To see this, first note that every $\lambda_i/\lambda_k$ appears at 
least three times in 
\begin{equation}
\label{multiset-5}
\set{ \frac{\lambda_i\lambda_j}{\lambda_k \lambda_{\ell}} \st 1 \leq i < j \leq 5, \, 1 \leq k < \ell \leq 5} , 
\end{equation}
hence the difference~\eqref{multiset-1} has the same support as~\eqref{multiset-5}. 
Similarly, the multiset~\eqref{multiset-3} has the same support as
\begin{equation}
\label{multiset-6}
\set{ \frac{\mu_i\mu_j}{\mu_k \mu_{\ell}} \st 1 \leq i < j \leq 5, \, 1 \leq k < \ell \leq 5}. 
\end{equation}
But using~\eqref{multiset-2} we see that twice the multiset~\eqref{multiset-eigenvalues} 
coincides with the sum of $\set{1,1}$, \eqref{multiset-1}, and \eqref{multiset-3}. 
It follows that if the support of~\eqref{multiset-eigenvalues} is $\set{1}$, then 
so are the supports of \eqref{multiset-5} and \eqref{multiset-6}, and hence 
all $\lambda_i$ coincide. 
\end{proof}

In the next proof, we use the following convenient notation. 
Given an endomorphism $\psi$ of a $k$-vector space and $\lambda \in k$, 
we write $\mult_{\lambda}(\psi)$ for the multiplicity of the eigenvalue $\lambda$ for~$\psi$.   
If $\psi$ is an involution, its eigenvalues are $\pm1$, and we say:  
$\psi$ is of type $(p,q)$ if $\mult_{1}(\psi) = p$ and $\mult_{-1}(\psi) = q$;  
$\psi$ is of type $\set{p,q}$ if it is either of type $(p,q)$ or $(q,p)$.
Keep in mind below our abuse of notation by which given $a$ in 
$G$ or $H$, we fix a lift to $\tG$ or $\tH$ denoted by the same symbol; 
if $a$ is an involution, we choose our lift to be an involution as well. 

\begin{lemma}
\label{lemma-aut-involution}
Let $(g_1, g_2) \in G \times G$ be such that $[(g_1H, g_2H)] \in \cN$. 
Let $1 \neq a \in G$ be an automorphism of $X_{g_1,g_2}$ which satisfies $a^2 = 1$.  
\begin{enumerate}
\item \label{aut-involution-I}
If $a$ is of type I, then the trace of the map  
\begin{equation*}
a_* \colon \rT_{[(g_1H, g_2H)]}\cN \to \rT_{[(g_1H, g_2H)]}\cN 
\end{equation*} 
is one of the following: $3, -5, -13$. 

\item \label{aut-involution-II}
If $a$ is of type II, then the trace of the map 
\begin{equation*}
(\sigma,a)_* \colon \rT_{[(g_1H, g_2H)]}\cN \to \rT_{[(g_1H, g_2H)]}\cN 
\end{equation*} 
is one of the following: $1, -3, -15, -35$. 
\end{enumerate}
\end{lemma}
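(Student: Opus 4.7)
The plan is to treat Type I and Type II separately, reducing in each case to $(g_1, g_2) = (1, g)$ and computing the trace of the induced tangent map using Lemma~\ref{lemma-aut-action}. I will fix involutive lifts of the relevant elements of $G$ and $H$ to $\tG$ and $\tH$ throughout.

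For Type I, $a$ and $b := g^{-1} a g$ both lie in $H$, and I would directly reuse the eigenvalue multiset~\eqref{multiset-eigenvalues} from the proof of Lemma~\ref{lemma-aut-action-CY}. Summing it yields
\begin{equation*}
\tr(a_*) \;=\; 5 \;-\; \sum_{i,k} \lambda_i/\lambda_k \;-\; \sum_{i,k} \mu_i/\mu_k,
\end{equation*}
where $(\lambda_i)$ and $(\mu_i)$ are the eigenvalues on $V$ of involutive lifts $a_0, b_0 \in \tH$. A nontrivial involution on $V$ has type $(r, s) \in \set{(4, 1), (3, 2)}$ up to swap, so each inner sum equals $(r-s)^2 \in \set{1, 9}$. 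Ranging over the four combinations yields $\tr(a_*) \in \set{3, -5, -13}$, which is part (1).

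For Type II, setting $A = ag$ and $B = g^{-1}a$, both lie in $H$, and $a^2 = 1$ forces $B = A^{-1}$. The map of Lemma~\ref{lemma-aut-action}(2) thus assumes the anti-diagonal form $(R_1, R_2) \mapsto (A R_2 A^{-1}, A^{-1} R_1 A)$ on $\fg/\fh \oplus \fg/\fh$, which manifestly has trace $0$. A short check, using $A g^{-1} = a$ and $gA^{-1} = a^{-1}$, shows this map preserves the embedded $\fg$-subspace and restricts on it to the conjugation $S \mapsto a S a^{-1}$. Since $a$ is an involution of type $\set{p, q}$ on $W$ with $p + q = 10$, the trace of this conjugation on $\fg = \pgl(W)$ is $(p-q)^2 - 1$, giving
\begin{equation*}
\tr((\sigma, a)_*) \;=\; 0 \;-\; \left((p-q)^2 - 1\right) \;=\; 1 - (p-q)^2.
\end{equation*}
The nontrivial involutive types on $W$ are $\set{5,5}, \set{6,4}, \set{7,3}, \set{8,2}, \set{9,1}$, giving candidate values $1, -3, -15, -35, -63$.

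The main obstacle is to rule out type $\set{9, 1}$, which would contribute the value $-63$ not listed in part (2). My expected route is geometric: if $a$ had type $\set{9, 1}$, then its isolated fixed point $p_0 \in \bP(W)$ cannot lie on $X$, since otherwise $a$ would act trivially on $\rT_{p_0} X \subset W/k p_0$ (the $-1$-eigenspace is precisely $k p_0$) and hence trivially on $X$ by linearisation, contradicting $a \neq 1$. A further geometric input, for instance via the holomorphic Lefschetz formula applied to $a|_X$, or by exploiting that $a$ arises as a swap $\Gr_1 \xrightarrow{\sim} \Gr_2$ necessarily induced by an element of $\PGL(V) = \Aut(\Gr(2, V))$, should then exclude type $\set{9, 1}$ altogether and leave exactly the list $\set{1, -3, -15, -35}$.
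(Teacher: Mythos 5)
Your trace computations are correct and follow essentially the same decomposition as the paper's proof (the multiset~\eqref{multiset-eigenvalues} for type~I; the $\fg/\fh\oplus\fg/\fh$ versus embedded-$\fg$ split for type~II). Summing eigenvalues directly is a legitimate streamlining of what the paper does, and your formulas $\tr(a_*)=5-(r_1-s_1)^2-(r_2-s_2)^2$ and $\tr((\sigma,a)_*)=1-(p-q)^2$ agree with the paper's $\tr=101-2(p^2+q^2)$.

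However, there is a genuine gap in type~II: you cannot exclude $\set{p,q}=\set{9,1}$, and your sketched geometric fix does not work as stated. You claim that if the isolated fixed point $p_0$ (spanning the $-1$-eigenspace of a lift of $a$) lay on $X$, then $a$ would act trivially on $\rT_{p_0}X$. This is wrong: $\rT_{p_0}\bP(W)\cong\mathrm{Hom}(kp_0, W/kp_0)$, and the lift acts as $-1$ on $kp_0$ and $+1$ on $W/kp_0$, so $a$ acts on $\rT_{p_0}\bP(W)$, hence on $\rT_{p_0}X$, as $-\mathrm{id}$, not the identity. So $p_0\in X$ is not immediately contradictory, and the Lefschetz/swap alternatives you mention are left unexamined. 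The paper resolves this with a purely linear-algebraic observation that costs nothing once one tracks multiplicities rather than traces alone: the $+1$-eigenspace of $\Ad_a$ on the embedded $\fg$ has dimension $p^2+q^2-1$, while the $+1$-eigenspace of the anti-diagonal involution on $\fg/\fh\oplus\fg/\fh$ has dimension only $75 = \dim\fg/\fh$. Since the former injects into the latter (the subspace is preserved), one needs $p^2+q^2-1\le 75$, i.e.\ $p^2+q^2\le 76$, which already kills $\set{9,1}$ ($81+1=82$). Equivalently, the formula $\mult_1((\sigma,a)_*)=76-p^2-q^2$ would be negative. Replacing your geometric sketch with this multiplicity bound would complete part~(2).
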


\begin{proof}
As in the proof of Lemma~\ref{lemma-aut-action-CY}, we may assume 
$g_1 = 1$ and $g_2 = g$. 

Assume $a$ is of type I. 
To compute the trace of $a_*$, it suffices to compute $\mult_1(a_*)$. 
For this, we follow the proof of Lemma~\ref{lemma-aut-action-CY}. 
As there, let $a_0 \in H$ be a preimage of $a$, and let $b_0 \in H$ be a 
preimage of $b = g^{-1}ag$. 
Then the formula~\eqref{multiset-eigenvalues} for the multiset of eigenvalues 
of $a_*$ shows 
\begin{equation*}
\mult_1(a_*) = 1 + \mult_1 (\Ad_b \colon \tfg/\tfh \to \tfg/\tfh ) -  
\mult_1 (\Ad_{a_0} \colon \tfh \to \tfh ) -
\mult_1 (\Ad_{b_0} \colon \tfh \to \tfh ). 
\end{equation*}

To compute the above quantity, we use the following remark. 
Let $\psi \colon L \to L$ be an involution of a $k$-vector space $L$ of type $\set{p,q}$.   
Then the $+1$ eigenspace of the map 
\begin{align*}
\Ad_{\psi} \colon & \gl(L) \to \gl(L) \\ 
& R  \mapsto  \psi R \psi^{-1}
\end{align*}
consists of $R \in \gl(L)$ that commute with $\psi$, and hence $\mult_{1}(\Ad_{\psi}) = p^2 + q^2$. 
Since $a_0 \neq 1$ by assumption, 
the involution $a_0$ is either of type $\set{4,1}$ or $\set{3,2}$, and $a$ is of type $\set{4,6}$. 
Similarly, $b_0$ is either of type $\set{4,1}$ or $\set{3,2}$, and $b$ is of type $\set{4,6}$. 
Thus, using the above formula we find  
\begin{equation*}
\mult_1(a_*) = 
\begin{cases}
19 & \text{ if $a_0$ and $b_0$ are of type $\set{4,1}$}, \\
23 & \text{ if $a_0$ and $b_0$ are of different $\set{p,q}$ types}, \\
27 & \text{ if $a_0$ and $b_0$ are of type $\set{3,2}$}. 
\end{cases}
\end{equation*}
Since $\dim \rT_{[(H, gH)]}\cN = 51$, this gives for the trace 
\begin{equation*}
\tr(a_*) =
\begin{cases}
-13 & \text{ if $a_0$ and $b_0$ are of type $\set{4,1}$}, \\
-5 & \text{ if $a_0$ and $b_0$ are of different $\set{p,q}$ types} , \\
3 & \text{ if $a_0$ and $b_0$ are of type $\set{3,2}$}, 
\end{cases}
\end{equation*}
and hence proves \eqref{aut-involution-I} of the lemma. 

Now assume $a$ is of type II, i.e. $a \in gH \cap Hg^{-1} \subset G$. 
Then by Lemma~\ref{lemma-aut-action}\eqref{type-II-action} the  
map 
$(\sigma,a)_* \colon \rT_{[(H, gH)]}\cN \to \rT_{[(H, gH)]}\cN$  
is induced by the map 
\begin{equation} 
\label{a-II-action}
\begin{aligned}
\fg \oplus \fg & \to \fg \oplus \fg \\
(R_1, R_2) & \mapsto 
((ag)R_2(ag)^{-1}, 
(g^{-1}a)R_1(g^{-1}a)^{-1}) . 
\end{aligned}
\end{equation}
That is, in terms of the presentation 
\begin{equation}
\label{a-II-presentation}
\begin{array}{ccccccccc}
0 & \to &  \fg & \to &  \fg/\fh \oplus \fg/\fh & \to & \rT_{[(H, gH)]}\cN & \to &  0 \\ 
&& S & \mapsto &   (S, g^{-1}Sg) &  & & &
\end{array}
\end{equation}
given by \eqref{Tg1Hg2H}, $(\sigma,a)_*$ is induced by \eqref{a-II-action}. 

We compute $\mult_1((\sigma,a)_*)$ by computing the multiplicity of 
the eigenvalue $1$ on the terms \mbox{$\fg/\fh \oplus \fg/\fh$} and  
$\fg$ in~\eqref{a-II-presentation} separately. 

\medskip \noindent
\textit{The $\fg/\fh \oplus \fg/\fh$ term}. 
Let $b = ag \in G$, which by assumption is in the image of the embedding $H \to G$. 
Note that $b^{-1} = g^{-1}a^{-1} = g^{-1}a$, hence~\eqref{a-II-action} 
can be written 
\begin{equation*}
(R_1, R_2) \mapsto (bR_2b^{-1}, b^{-1}R_1b)
\end{equation*}
The $+1$ eigenspace consists of $(R_1, R_2)$ such that $R_2 = b^{-1}R_1b$. 
Hence the induced map $\fg/\fh \oplus \fg/\fh \to \fg/\fh \oplus \fg/\fh$ 
has eigenvalue $1$ with multiplicity  $75 = \dim \fg/\fh$. 

\medskip \noindent
\textit{The $\fg$ term}. 
For $S \in \fg$ the map~\eqref{a-II-action} sends 
$(S, g^{-1}Sg) \mapsto (aSa^{-1}, g^{-1}(aSa^{-1})g)$. 
Hence the induced action on the term $\fg$ in~\eqref{a-II-presentation} 
is $S \mapsto aSa^{-1}$. 
By assumption $1 \neq a \in G$ is an involution, and so has 
type $\set{p,q}$ for some $p+q = 10$ and $p,q \geq 1$. 
By the observation from above, 
the map $\tfg \to \tfg$ given by $S \mapsto aSa^{-1}$ has eigenvalue $1$ 
with multiplicity $p^2 + q^2$; thus the corresponding map 
$\fg \to \fg$ has eigenvalue $1$ with multiplicity $p^2 + q^2 - 1$. 

\medskip
Combining the above, we conclude $\mult_1((\sigma,a)_*) = 76 - p^2 - q^2$ where 
the type $\set{p,q}$ of $a$ is one of the following:
$\set{9,1}, \set{8,2}, \set{7,3}, \set{6,4}, \set{5,5}$. 
Note that for $\set{p,q} = \set{9,1}$ this gives $\mult_1((\sigma,a)_*) = -6$, 
which is nonsense; so this case does not occur. 
Since $\dim \rT_{[(H, gH)]}\cN = 51$, we find  
$\tr((\sigma,a)_*) = 101 - 2(p^2+q^2)$. 
Plugging in $\set{p,q} = \set{8,2}, \set{7,3}, \set{6,4}, \set{5,5}$ 
gives the values in~\eqref{aut-involution-II} of the lemma. 
\end{proof} 

\subsection{Proof of Theorem~\ref{theorem-faithful-action}}
Suppose $\gamma \in \Aut_{\cN}(s)$ acts trivially on $\rT_{s}\cN$. 
If $\gamma$ is of type I, then $\gamma = 1$ by Lemma~\ref{lemma-aut-action-CY}\eqref{aut-action-I}. 
If $\gamma$ is of type II, then $\gamma^2$ is of type I, and hence $\gamma^2 = 1$ 
by the previous sentence. 
But then by Lemma~\ref{lemma-aut-involution}\eqref{aut-involution-II}, 
either $\gamma = 1$ or $\tr(\gamma_*) \in \set{1,-3,-15,-35}$. 
By assumption $\gamma_* = \id$ and hence $\tr(\gamma_*) = 51$, so 
we conclude $\gamma = 1$. \qed

\subsection{Proof of Proposition~\ref{proposition-tr-involution}}
This follows by combining Theorem~\ref{theorem-faithful-action} and 
Lemma~\ref{lemma-aut-involution}. 
(The case $\tr(\gamma_*) = 51$ corresponds to $\gamma = 1$.)  \qed


\section{The double mirror involution} 
\label{section-tau}

We begin this section by showing that the operation of passing to the double mirror 
preserves smoothness of {\GPK} threefolds. 
Using this, in \S\ref{subsection-involution} we define the double mirror involution 
$\tau$ of the moduli stack $\cN$ of {\GPK} data. 
In \S\ref{subsection-derivative-tau} we compute the derivative of $\tau$. 

\subsection{Simultaneous smoothness} 
Let 
\begin{align*}
X & = \Gr_1 \cap \Gr_2 \subset \bP , \\
Y & = \Gr_1^\vee \cap \Gr_2^\vee \subset \bP^\vee , 
\end{align*}
be {\GPK} threefolds corresponding to isomorphisms 
$\phi_i \colon \wedge^2V \xrightarrow{\sim} W$, $i=1,2,$ as in \S\ref{section-intro}. 
We aim to show the following result, which is analogous to~\cite[Corollary~2.3]{pfaffian-grassmannian}. 

\begin{proposition}
\label{X-Y-smooth}
The variety $X$ is a smooth threefold if and only if the same is true of $Y$. 
\end{proposition}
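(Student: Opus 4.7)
The plan is to identify a criterion for $X$ to be a smooth threefold that is manifestly symmetric under projective duality, so that it automatically also characterizes $Y$. The right object is the conormal variety
\[
\mathcal{G}_i = \{(p, H) \in \bP \times \bPv : p \in \Gr_i, \, H \supset \rT_p \Gr_i\}, \quad i = 1, 2,
\]
where $\rT_p\Gr_i$ denotes the projective tangent space, a $\bP^6$ sitting inside $\bP^9$. Each $\mathcal{G}_i$ is smooth of dimension $8$, realized as a $\bP^2$-bundle over $\Gr_i$.

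First I would prove the equivalence that $X$ is a smooth threefold if and only if $\mathcal{G}_1 \cap \mathcal{G}_2 = \emptyset$. Since $\dim \Gr_i = 6$ in $\bP = \bP^9$, the scheme-theoretic intersection $\Gr_1 \cap \Gr_2$ satisfies $\rT_p X = \rT_p\Gr_1 \cap \rT_p\Gr_2$ at every $p \in X$, and by linear algebra this intersection has dimension at least $3$, with equality precisely when $\rT_p\Gr_1 + \rT_p\Gr_2 = \rT_p\bP$, i.e.\ when no hyperplane contains both tangent spaces. Hence $\mathcal{G}_1 \cap \mathcal{G}_2 = \emptyset$ is equivalent to $\dim \rT_p X = 3$ for every $p \in X$, which is the same as $X$ being a smooth threefold. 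This step handles both failure modes uniformly: $X$ having a component of excess dimension, and $X$ being three-dimensional but singular at some point.

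Next I would invoke the classical reflexivity (biduality) theorem, which says that under the canonical identification $\bP \times \bPv \cong \bPv \times \bP$, the conormal variety of any smooth projective subvariety $Z \subset \bP$ coincides with the conormal variety of its projective dual $Z^\vee \subset \bPv$. Applied to $Z = \Gr_i$, this identifies $\mathcal{G}_i$ with the conormal variety of $\Gr_i^\vee \subset \bPv$. Applying the criterion of the previous step with the roles of $\bP$ and $\bPv$ reversed then yields that $Y$ is a smooth threefold if and only if $\mathcal{G}_1 \cap \mathcal{G}_2 = \emptyset$. Combining the two equivalences proves the proposition.

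I do not foresee a serious obstacle: the tangent-space computation is elementary, and the reflexivity theorem is standard. The one point worth verifying carefully is that $\mathcal{G}_1 \cap \mathcal{G}_2 = \emptyset$ detects both excess-dimension and singular-but-three-dimensional failures of $X$, but this is already captured by the tangent-space dimension count, since either failure forces $\dim \rT_p X > 3$ at some point $p$.
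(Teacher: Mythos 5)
Your proof is correct, and it is in essence the same argument the paper gives, just phrased in the more abstract language of conormal varieties and the general reflexivity (biduality) theorem. The paper works with a correspondence $Z \subset X \times Y$ consisting of pairs $(x,y)$ with $A_{x_i} \subset B_{y_i}^{\perp}$ for $i=1,2$; its Lemma~\ref{lemma-projective-dual} is precisely the concrete, hands-on version of reflexivity for $\Gr(2,V) \subset \bP(\wedge^2 V)$, giving the explicit description of a tangent hyperplane in terms of skew forms and simultaneously identifying $\Gr_i^{\vee}$ with the image of $\Gr(2,V^{\vee})$. Under the obvious projections, the paper's $Z$ is essentially your $\mathcal{G}_1 \cap \mathcal{G}_2$. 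What you gain by invoking reflexivity directly is brevity and conceptual clarity; what the paper gains by proving the explicit lemma is self-containedness and a description of $\Gr_i^\vee$ that is reused elsewhere (e.g.\ Remark~\ref{remark-projective-dual} and the incidence computation in \S\ref{section-L-equivalence}). One small point worth making explicit in your write-up: to run your first step in the reversed direction you need that $\Gr_i^\vee$ is again a smooth $6$-fold in $\bP^\vee$, so that the linear-algebra bound $\dim \rT_y Y \geq 3$ applies; this is true because $\Gr_i^\vee \cong \Gr(2, V^\vee)$, which is exactly what the paper's Lemma~\ref{lemma-projective-dual}\eqref{tangent-2} records.
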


\begin{remark}
If $X$ and $Y$ are of expected dimension, Proposition~\ref{X-Y-smooth} follows 
from Theorem~\ref{theorem-equivalence}, but we give a more direct proof below. 
\end{remark}

We start by recalling a basic fact about projective duality of $\Gr(2,V)$. 
\begin{lemma}
\label{lemma-projective-dual}
Let $x \in \Gr(2,V) \subset \bP(\wedge^2V)$ be a point corresponding to a 
$2$-plane $A \subset V$. 
Let $y \in \bP(\wedge^2V^\vee)$ be a point corresponding to a hyperplane $H \subset \bP(\wedge^2V)$. 
Then $H$ is tangent to $\Gr(2,V)$ at $x$ if and only if one of the following 
equivalent conditions hold:
\begin{enumerate}
\item \label{tangent-1} 
if $K \subset V$ denotes the kernel of the $2$-form on $V$ corresponding to $y$, 
then $A \subset K$. 
\item \label{tangent-2}
$y \in \Gr(2,V^\vee) \subset \bP(\wedge^2V^{\vee})$ and  
if $B \subset V^{\vee}$ is the corresponding $2$-plane with orthogonal 
$B^{\perp} = \ker(V \to B^{\vee})$, then $A \subset B^\perp$. 
\end{enumerate}
Moreover, in this case $K = B^{\perp}$. 
\end{lemma}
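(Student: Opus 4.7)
The plan is to prove the characterization by first computing the embedded (affine) tangent space of $\Gr(2,V)$ at $x$ inside $\wedge^2 V$, reading off condition~\eqref{tangent-1} directly from the tangency condition, and then using a rank analysis of the skew form $y$ on $V$ to deduce the equivalence with condition~\eqref{tangent-2}.

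First I would identify the affine tangent space to $\Gr(2,V) \subset \bP(\wedge^2 V)$ at $x = a_1 \wedge a_2$ with the $7$-dimensional subspace $A \wedge V \subset \wedge^2 V$. This is standard: working in the affine chart around $A$ given by $2$-planes transverse to a fixed complementary $3$-plane, the tangent space to $\Gr(2,V)$ at $A$ is $\Hom(A, V/A)$, and a direct dimension count $(1 + 2 \cdot 3 = 7)$ matches $\dim(A \wedge V)$. By definition, the hyperplane $H$ is tangent to $\Gr(2,V)$ at $x$ if and only if the linear form $y \in \wedge^2 V^\vee$ vanishes on the affine tangent space $A \wedge V$. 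Unwinding this says $y(a,v) = 0$ for all $a \in A$, $v \in V$, which is precisely $A \subset K$, giving condition~\eqref{tangent-1}.

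Next I would prove the equivalence of~\eqref{tangent-1} and~\eqref{tangent-2} and the final formula $K = B^\perp$ by analyzing the rank of the skew form $y$. Since $V$ has odd dimension $5$, the rank of $y$ is even, so $\rk(y) \in \set{0, 2, 4}$. The case $\rk(y) = 0$ is ruled out because $y$ represents a point of projective space. If~\eqref{tangent-1} holds, then $\dim K \geq \dim A = 2$, forcing $\rk(y) \leq 3$, hence $\rk(y) = 2$; equivalently $y$ is decomposable, which by the Plücker description means $y \in \Gr(2, V^\vee)$. Writing $y = \beta_1 \wedge \beta_2$ and $B = \langle \beta_1, \beta_2 \rangle$, the kernel of the associated skew form is exactly $B^\perp$, so $K = B^\perp$ and the inclusion $A \subset K$ becomes $A \subset B^\perp$, which is condition~\eqref{tangent-2}. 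Conversely, if~\eqref{tangent-2} holds, then $y = \beta_1 \wedge \beta_2$ for a basis of $B$, so $K = B^\perp$ and $A \subset B^\perp$ gives $A \subset K$.

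There is no serious obstacle: every step reduces to elementary linear algebra together with the classical identification of $\Gr(2,V) \subset \bP(\wedge^2 V)$ with the locus of decomposable bivectors. The only point requiring a little care is keeping the two dualities $V \leftrightarrow V^\vee$ and $\wedge^2 V \leftrightarrow \wedge^2 V^\vee$ consistently aligned so that the identification $K = B^\perp$ falls out cleanly.
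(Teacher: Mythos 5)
Your proof is correct. Note that the paper does not actually prove Lemma~\ref{lemma-projective-dual}: it is stated as ``a basic fact about projective duality of $\Gr(2,V)$'' and used without argument, so there is no in-paper proof to compare against. Your argument is the standard one and fills the gap cleanly: identifying the affine tangent cone at $x = a_1 \wedge a_2$ with $A \wedge V$ (a $7$-dimensional subspace, matching $\dim \Gr(2,5) + 1$), unwinding $y|_{A\wedge V} = 0$ to $\omega(a,v)=0$ for all $a\in A$, $v\in V$, i.e.\ $A\subset K$, and then using that a nonzero skew form on a $5$-dimensional space has rank $2$ or $4$ --- so $A\subset K$ forces $\dim K\geq 2$, hence rank $2$, hence $y$ is decomposable --- to get the equivalence with~\eqref{tangent-2} and the identity $K=B^\perp$. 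All the individual steps (dimension of $A\wedge V$, identification of $\ker(\beta_1\wedge\beta_2)$ with $B^\perp$, and the parity constraint on rank) check out.
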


\begin{remark}
\label{remark-projective-dual}
The equivalence of $H$ being tangent to $\Gr(2,V)$ at $x$ with~\eqref{tangent-1} 
holds for $V$ of any dimension, while the equivalence with~\eqref{tangent-2} 
is special to the case $\dim V = 5$. 
Note that~\eqref{tangent-2} says in particular that 
the projective dual of $\Gr_i \subset \bP$ is $\Gr_i^\vee \subset \bP^{\vee}$, as the notation indicates. 
\end{remark}

We will deduce Proposition~\ref{X-Y-smooth} from an auxiliary result, 
which describes the loci in $X$ and $Y$ where the defining Grassmannians  
do not intersect transversally. 
Given $x \in X = \Gr_1 \cap \Gr_2$, we let $x_i \in \Gr_i$ be the 
two corresponding points, 
and we write $A_{x_i} \subset V$ for the corresponding $2$-planes. 
Similarly, for $y \in Y = \Gr_1^\vee \cap \Gr_2^\vee$ we let $y_i \in \Gr_i^\vee$ 
be the corresponding points, and write $B_{y_i} \subset V^\vee$ for the 
corresponding $2$-planes. 
Define $Z \subset X \times Y$ to be the locus of pairs $(x,y)$ such 
that $A_{x_i} \subset B_{y_i}^{\perp}$ for $i=1,2$, and let 
$\pr_X \colon Z \to X$ and $\pr_Y \colon Z \to Y$ be the two 
projections. 

\begin{lemma}
\label{lemma-non-transverse}
The following hold: 
\begin{enumerate}
\item \label{X-non-transverse}
A point $x \in X$ has $\dim \rT_{X,x} > 3$ if and only if 
it is in the image $\pr_X(Z)$. 
\item \label{Y-non-transverse}
A point $y \in Y$ has $\dim \rT_{Y,y} > 3$ if and only if 
it is in the image $\pr_Y(Z)$.  
\end{enumerate}
\end{lemma}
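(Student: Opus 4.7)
My plan is to prove both statements simultaneously by a projective duality argument, since the roles of $X$ and $Y$ are symmetric once we invoke Lemma~\ref{lemma-projective-dual}. The key point will be that $\dim \rT_{X,x} > 3$ precisely when $\Gr_1$ and $\Gr_2$ fail to intersect transversally at $x$, and this failure of transversality is detected by the existence of a common tangent hyperplane.

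First, I would note that since $X = \Gr_1 \cap \Gr_2 \subset \bP$ is of expected dimension $3$ and each $\Gr_i$ is smooth of dimension $6$ in $\bP = \bP^9$, the Zariski tangent space satisfies
\begin{equation*}
\rT_{X,x} = \rT_{\Gr_1, x} \cap \rT_{\Gr_2, x},
\end{equation*}
with $\dim \rT_{X,x} \geq 3$, and equality holding if and only if $\rT_{\Gr_1, x} + \rT_{\Gr_2, x} = \rT_{\bP, x}$. Thus $\dim \rT_{X,x} > 3$ if and only if there exists a hyperplane $H \subset \bP$ containing both $\rT_{\Gr_1, x}$ and $\rT_{\Gr_2, x}$, i.e., a hyperplane tangent to both $\Gr_1$ and $\Gr_2$ at~$x$.

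Next I would translate this condition into the language of the lemma using Lemma~\ref{lemma-projective-dual}. Writing $y \in \bPv$ for the point dual to such an $H$ and using the identifications $(\phi_i^{-1})^* \colon \wedge^2 V^\vee \xrightarrow{\sim} W^\vee$ to view $y$ inside $\bP(\wedge^2 V^\vee)$ via each $\phi_i$, tangency of $H$ to $\Gr_i$ at the point $x_i$ (with $2$-plane $A_{x_i} \subset V$) is equivalent by the lemma to $y_i \in \Gr_i^\vee$ and $A_{x_i} \subset B_{y_i}^{\perp}$. The existence of a common tangent hyperplane thus amounts exactly to the existence of $y \in Y = \Gr_1^\vee \cap \Gr_2^\vee$ with $(x,y) \in Z$, which proves~\eqref{X-non-transverse}.

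For~\eqref{Y-non-transverse}, the argument is completely symmetric: applying the same reasoning in $\bPv$ with the roles of $\Gr_i$ and $\Gr_i^\vee$ exchanged reduces the question to the biduality $(\Gr_i^\vee)^\vee = \Gr_i$ together with the symmetric form of Lemma~\ref{lemma-projective-dual}. I don't expect any real obstacle; the only small subtlety is keeping track of the two different identifications $\phi_1, \phi_2$ when converting points of $\bPv$ into $2$-forms on $V$, but this is exactly what the notation $y_i$ and $B_{y_i}$ already encodes in the statement of the lemma.
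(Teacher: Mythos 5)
Your argument is correct and follows the same route as the paper: reduce $\dim \rT_{X,x}>3$ to the existence of a common tangent hyperplane, then apply Lemma~\ref{lemma-projective-dual} to translate this into membership in $Z$, with part~(2) by symmetry. The paper's proof of the symmetric case just explicitly observes that $Z$ can also be described by the condition $B_{y_i}\subset A_{x_i}^{\perp}$, which is the same biduality remark you make.
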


\begin{proof}
The condition $\dim \rT_{X,x} > 3$ is equivalent to $\rT_{\Gr_1, x}$ and 
$\rT_{\Gr_2,x}$ intersecting non-transversely in $\rT_{\bP,x}$, i.e. to the existence 
of a hyperplane in $\rT_{\bP,x}$ containing both $\rT_{\Gr_i, x}$, 
or equivalently to the existence of a projective hyperplane $H \subset \bP$ 
tangent to both $\Gr_i$ at $x$. But by Lemma~\ref{lemma-projective-dual}, 
the existence of such an $H$ is equivalent to the existence of a point $y \in Y$ 
such that $(x,y) \in Z$. This proves part~\eqref{X-non-transverse} of the lemma. 
Part~\eqref{Y-non-transverse} follows by symmetry (note that $Z$ can also be 
described as the locus of $(x,y)$ such that $B_{y_i} \subset A_{x_i}^\perp$, $i=1,2$). 
\end{proof}

\begin{proof}[Proof of Proposition~\ref{X-Y-smooth}]
If $X$ is a smooth threefold, then by Lemma~\ref{lemma-non-transverse}\eqref{X-non-transverse}
the correspondence $Z \subset X \times Y$ is empty. 
Hence $Y$, which a priori has dimension at least $3$, 
is in fact a smooth threefold by Lemma~\ref{lemma-non-transverse}\eqref{Y-non-transverse}. 
By symmetry, we conclude conversely that if $Y$ is a smooth threefold, then so is $X$. 
\end{proof}

\subsection{The double mirror involution}
\label{subsection-involution}
For any $(g_1, g_2) \in G \times G$, we have defined 
a Grassmannian intersection $X_{g_1,g_2}$ by~\eqref{Xg1g2}. 
Let 
\begin{equation}
\label{Yg1g2}
Y_{g_1,g_2} = (g_1 \Gr)^\vee \cap (g_2 \Gr)^\vee \subset \bP^\vee 
\end{equation}
be the corresponding double mirror. 

We can identify $Y_{g_1,g_2}$ with an explicit Grassmannian intersection in $\bP$, as follows. 
Fix from now on an isomorphism $V \cong V^\vee$ (or more explicitly, a basis for $V$). 
This induces an isomorphism $\wedge^2V \cong \wedge^2V^{\vee}$, 
and hence an isomorphism
\begin{equation*}
\theta \colon W \xrightarrow{\phi^{-1}} \wedge^2V \cong \wedge^2V^{\vee} 
\xrightarrow{(\phi^{-1})^*} W^\vee, 
\end{equation*}
which identifies $\Gr$ with $\Gr^\vee$. 
Given $g \in G$, its transpose is by definition the automorphism of $W$ given by 
$g^T = \theta^{-1} \circ g^* \circ \theta$, and its inverse transpose is 
$g^{-T} = (g^{-1})^T$. 
Here and below, we slightly abuse notation by not distinguishing between 
$g \in G = \PGL(W)$ and a lift of $g$ to $\GL(W)$. 

\begin{lemma}
\label{lemma-Yg1g2}
The isomorphism $\theta^{-1} \colon \bP^\vee \xrightarrow{\sim} \bP$ induces an isomorphism $Y_{g_1, g_2} \cong X_{g_1^{-T} \!, \, g_2^{-T}}$. 
\end{lemma}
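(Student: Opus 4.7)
The plan is to prove the set-theoretic equality $\theta^{-1}(Y_{g_1,g_2}) = X_{g_1^{-T},g_2^{-T}}$; since $\theta^{-1}$ is an isomorphism of projective spaces, this will immediately yield the desired isomorphism of schemes.

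First I would establish the following general transformation law for projective duality under the action of $\GL(W)$: for any $g \in \GL(W)$ and any closed subvariety $Z \subset \bP(W)$,
\[
(gZ)^\vee = (g^{-1})^* \, Z^\vee \subset \bP(W^\vee),
\]
where $(g^{-1})^*$ denotes the contragredient action on $W^\vee$. This is immediate from the definition of the projective dual: a hyperplane $[\phi]$ is tangent to $gZ$ at the point $[gv]$ exactly when the pulled-back hyperplane $g^{-1}\{\phi=0\} = \{\phi \circ g = 0\}$ is tangent to $Z$ at $[v]$, i.e.\ when $g^*\phi \in Z^\vee$.

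Applying this identity to $Z = \Gr$ and $g = g_i$ for $i=1,2$ gives
\[
Y_{g_1,g_2} = (g_1^{-1})^* \Gr^\vee \cap (g_2^{-1})^* \Gr^\vee \subset \bP^\vee.
\]
Then I would use the two features of $\theta$ that are built into its definition: first, that $\theta$ is assembled from the canonical identification $V \cong V^\vee$ and the maps $\phi, (\phi^{-1})^*$, so that it sends $\Gr$ onto $\Gr^\vee$, i.e.\ $\theta^{-1}(\Gr^\vee) = \Gr$; and second, that the defining equation $g^T = \theta^{-1} \circ g^* \circ \theta$ rearranges to $(g^{-1})^* = \theta \circ g^{-T} \circ \theta^{-1}$. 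Substituting the second identity into the previous display and then using the first yields
\[
\theta^{-1}\bigl((g_i^{-1})^* \Gr^\vee\bigr) = g_i^{-T} \cdot \theta^{-1}(\Gr^\vee) = g_i^{-T}\Gr,
\]
so intersecting over $i=1,2$ gives $\theta^{-1}(Y_{g_1,g_2}) = g_1^{-T}\Gr \cap g_2^{-T}\Gr = X_{g_1^{-T},g_2^{-T}}$, as claimed.

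There is no real obstacle here beyond convention bookkeeping: one must be careful to align the contragredient action $g \mapsto (g^{-1})^*$ on $W^\vee$ with the intrinsic transpose $g^T$ on $W$ specified in the paper. This compatibility is exactly what the defining equation $g^T = \theta^{-1} g^* \theta$ encodes, which is why the two steps combine as cleanly as they do.
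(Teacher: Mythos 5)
Your proof is correct and takes essentially the same approach as the paper: the paper's proof simply states the identity $(g\Gr)^\vee = (g^{-1})^*\Gr^\vee$ and says ``the result follows,'' and you have filled in exactly the justification of that identity and the algebraic bookkeeping (translating $(g^{-1})^*$ into $g^{-T}$ via $\theta$) that the paper leaves implicit. One small remark: you frame the conclusion as a set-theoretic equality, but since $\theta^{-1}$ is an isomorphism it carries the scheme-theoretic intersection $(g_1^{-1})^*\Gr^\vee \cap (g_2^{-1})^*\Gr^\vee$ to the scheme-theoretic intersection $g_1^{-T}\Gr \cap g_2^{-T}\Gr$, so the argument already gives the equality of closed subschemes without needing any reducedness hypothesis.
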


\begin{proof}
For any $g \in G$, it follows from the definitions that 
\begin{equation*}
(g \Gr)^\vee = (g^{-1})^* \Gr^\vee \subset \bP^\vee. 
\end{equation*}
The result follows. 
\end{proof}

The involution 
\begin{align*}
G \times G & \to G \times G \\ 
(g_1,g_2) & \mapsto (g_1^{-T}, g_2^{-T})
\end{align*}
induces an involution $\ttau$ of $G/H \times G/H$. 
By Proposition~\ref{X-Y-smooth} combined with Lemma~\ref{lemma-Yg1g2},  
the involution $\ttau$ preserves the open subscheme $U \subset G \times G$ appearing 
in the definition~\eqref{equation-N} of the stack $\cN$, 
and corresponds to passing to the double mirror {\GPK} threefold on this locus. 
We denote by 
\begin{equation*}
\tau \colon \cN \to \cN 
\end{equation*} 
the induced involution, which we call the \emph{double mirror involution} of $\cN$.  

\subsection{Derivative of the double mirror involution} 
\label{subsection-derivative-tau}

In the following result, we use the terminology introduced directly after Lemma~\ref{lemma-presentation-TN}. 

\begin{lemma}
\label{lemma-derivative-tau}
Let $(g_1, g_2) \in G \times G$ be such that $[(g_1H, g_2H)] \in \cN$. 
Then the derivative 
\begin{equation*}
\rd_{[(g_1H, g_2H)]} \tau \colon \rT_{[(g_1H,g_2H)]}\cN \to \rT_{[(g_1^{-T}H,g_2^{-T}H)]}\cN 
\end{equation*}
is induced by the map 
\begin{align*}
\fg \oplus \fg & \to \fg \oplus \fg \\
(R_1, R_2) & \mapsto (-R_1^T, -R_2^T) . 
\end{align*}
\end{lemma}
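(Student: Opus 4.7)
\medskip

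\noindent\textbf{Proof proposal.} The plan is to first compute the derivative of the map $g \mapsto g^{-T}$ on $G$ in the trivialization afforded by the isomorphism $\eta_g \colon \fg \xrightarrow{\sim} \rT_{gH}(G/H)$ of \S\ref{subsection-tangent-space-N}, then check that the resulting formula lifts coherently to the presentation of $\rT_{[(g_1H, g_2H)]}\cN$ given by Lemma~\ref{lemma-presentation-TN}.

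\medskip

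First I would work on $G/H$ one factor at a time. By the definition of $\eta_g$, a tangent vector at $gH$ is represented by the $k[\vepsilon]/(\vepsilon^2)$-point $g(1+\vepsilon R)H$ for $R \in \fg$. Applying $\ttau$ on the level of $G$, this is carried to
\begin{equation*}
\bigl(g(1+\vepsilon R)\bigr)^{-T} H
= \bigl((1+\vepsilon R)^{-1} g^{-1}\bigr)^T H
= g^{-T}\bigl(1 - \vepsilon R\bigr)^T H
= g^{-T}\bigl(1 + \vepsilon(-R^T)\bigr)H,
\end{equation*}
where I used $(1+\vepsilon R)^{-1} = 1 - \vepsilon R$ modulo $\vepsilon^2$ and $(AB)^T = B^T A^T$. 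In the notation of \S\ref{subsection-tangent-space-N}, this reads $\ttau \circ \eta_g(R) = \eta_{g^{-T}}(-R^T)$. Thus the derivative of $\ttau$ on each $G/H$-factor is induced by the linear map $R \mapsto -R^T$ on $\fg$.

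\medskip

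Next I would verify compatibility with the presentation~\eqref{Tg1Hg2H} of the tangent space to $\cN$. For this, two things must be checked about the candidate lift $(R_1, R_2) \mapsto (-R_1^T, -R_2^T)$ on $\fg \oplus \fg$. First, it must preserve the subspace $\fh \oplus \fh$; this follows because the isomorphism $\theta$ defining the transpose is induced by a chosen identification $V \cong V^\vee$, so transposition on $\fg = \pgl(W) = \pgl(\wedge^2 V)$ restricts to transposition on $\fh = \pgl(V)$, which is again an element of $\fh$. Second, it must carry the image of the diagonal-type map $S \mapsto (g_1^{-1}Sg_1, g_2^{-1}Sg_2)$ from~\eqref{S-map} for $(g_1, g_2)$ into the corresponding image for $(g_1^{-T}, g_2^{-T})$, namely $S' \mapsto (g_1^T S' g_1^{-T}, g_2^T S' g_2^{-T})$. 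This is immediate from the identity
\begin{equation*}
-\bigl(g_i^{-1} S g_i\bigr)^T = -g_i^T S^T g_i^{-T} = g_i^T(-S^T)g_i^{-T},
\end{equation*}
so the vector $(g_1^{-1}Sg_1, g_2^{-1}Sg_2)$ is sent to the image of $S' = -S^T$.

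\medskip

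With these two compatibilities in hand, the map $(R_1, R_2) \mapsto (-R_1^T, -R_2^T)$ descends to a well-defined linear map between the quotients that define $\rT_{[(g_1H,g_2H)]}\cN$ and $\rT_{[(g_1^{-T}H,g_2^{-T}H)]}\cN$, and by construction this descended map agrees with $\rd \tau$ factor by factor on $\fg/\fh \oplus \fg/\fh$. I do not anticipate a serious obstacle: the only mildly subtle point is the sign, which is forced by the inverse in $g \mapsto g^{-T}$, and the check that transposition really does preserve $\fh \subset \fg$ under our conventions (which is where we use that $\theta$ was built from an isomorphism $V \cong V^\vee$).
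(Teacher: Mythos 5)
Your proposal is correct and takes essentially the same route as the paper: the paper's proof consists of citing the identity $(1+\vepsilon R)^{-T} = 1 - \vepsilon R^T$ and declaring the rest routine, which is exactly the computation $\ttau \circ \eta_g(R) = \eta_{g^{-T}}(-R^T)$ at the heart of your argument. You simply spell out the two compatibility checks (preservation of $\fh \oplus \fh$, and that the image of $S \mapsto (g_1^{-1}Sg_1, g_2^{-1}Sg_2)$ is carried to the image of $S' \mapsto (g_1^T S' g_1^{-T}, g_2^T S' g_2^{-T})$) that the paper leaves implicit in the phrase "follows easily from the definitions."
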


\begin{proof}
This follows easily from the definitions and the observation 
\begin{equation*}
(1+\vepsilon R)^{-T} = 1 - \vepsilon R^T 
\end{equation*}
for $R \in \fg$. 
\end{proof}


\section{Proof of Theorem~\ref{theorem-not-birational}}
\label{section-theorem-not-birational}

The moduli spaces and morphisms constructed in 
\S\ref{section-moduli} and \S\ref{section-tau}  
can be summarized by the diagram 
\begin{equation*}
\vcenter{
\xymatrix{
\cN \ar@(dl, ul)[]^{\tau} \ar[d]_{\pi_{\cN}} \ar[r]^{f} 
& \cM \ar[d]^{\pi_{\cM}} \\
N \ar[r] & M  
}
}
\end{equation*} 
where $f$ is the $\PGL$-parameterization of the moduli stack $\cM$ of 
{\GPK} threefolds by the moduli stack $\cN$ of {\GPK} data, 
$\tau$ is the double mirror involution, and 
$\pi_{\cN}$ and $\pi_{\cM}$ are coarse moduli spaces. 

Two {\GPK} threefolds are birational if and only if they are isomorphic, 
since they are Calabi--Yau of Picard number $1$. 
Recall that $f \colon \cN \to \cM$ induces an injection 
$|\cN(k)| \to |\cM(k)|$ by Lemma~\ref{lemma-f-points-auts}. 
(In fact $f$ is an open immersion by Theorem~\ref{theorem-moduli}, 
but we only need the weaker statement about points for the 
following argument.) 
Moreover, the locus $\cZ \subset \cN$ where the morphisms 
$\pi_{\cN} \circ \tau$ and $\pi_{\cN}$ agree is closed, 
because $N$ is separated by Lemma~\ref{lemma-N}.  
Hence Theorem~\ref{theorem-not-birational} is equivalent to the 
assertion that $\cZ$ does not coincide with $\cN$, i.e. 
that the morphisms $\pi_{\cN} \circ \tau$ and $\pi_{\cN}$ 
do not coincide. 
To prove Theorem~\ref{theorem-not-birational}, we will make a trace argument using 
Proposition~\ref{proposition-tr-involution} to show the following necessary condition for $\pi_{\cN} \circ \tau = \pi_{\cN}$ fails. 

\begin{lemma}
\label{lemma-fixed-point-tangent}
Let $s \in \cN$ be a point such that $\tau(s) = s$. 
If $\pi_{\cN} \circ \tau = \pi_{\cN}$, then the derivative 
\begin{equation*}
\rd_s \tau \colon \rT_{s}\cN \to \rT_{s} \cN 
\end{equation*} 
is contained in the image of the homomorphism 
$\Aut_{\cN}(s) \to \GL(\rT_{s}\cN)$. 
\end{lemma}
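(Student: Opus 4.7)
The plan is to reduce the statement to a formal Galois-theoretic calculation at $s$, using the local structure of $\cN$ as a smooth Deligne--Mumford stack.

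Let $G_s = \Aut_{\cN}(s)$, which is finite since $\cN$ is Deligne--Mumford (Lemma~\ref{lemma-N}). The first step is to identify the formal completion of $\cN$ at $s$: since $\cN$ is smooth and we work in characteristic $0$, the combination of the local structure theorem for DM stacks and Cartan's linearization theorem yields a canonical isomorphism
\begin{equation*}
\widehat{\cN}_s \cong [\widehat{\rT_s \cN} / G_s],
\end{equation*}
where $\widehat{\rT_s \cN}$ denotes the formal disc at the origin of $\rT_s \cN$ and $G_s$ acts via the natural homomorphism $G_s \to \GL(\rT_s \cN)$. Correspondingly, the formal completion of $N$ at $\pi_{\cN}(s)$ is the invariant quotient $\widehat{\rT_s \cN} \gitq G_s$.

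Using the isomorphism witnessing $\tau(s) = s$, the morphism $\tau$ restricts to a formal automorphism $\widehat{\tau}$ of $\widehat{\cN}_s$. The next step is to lift $\widehat{\tau}$ through the \'etale cover $\widehat{\rT_s \cN} \to [\widehat{\rT_s \cN}/G_s]$ to a formal automorphism $\hat{\phi}$ of $\widehat{\rT_s \cN}$, well-defined up to post-composition by the $G_s$-action, and whose derivative at the origin is $\rd_s \tau$. The use of the hypothesis will be that $\pi_{\cN} \circ \tau = \pi_{\cN}$ restricts to an equality of morphisms $\widehat{\cN}_s \to \widehat{N}_{\pi_{\cN}(s)}$, which translates at the atlas level to the statement that $\hat{\phi}$ composed with the quotient map $\widehat{\rT_s \cN} \to \widehat{\rT_s \cN} \gitq G_s$ equals the original quotient map; equivalently, $\hat{\phi}^*$ acts trivially on the ring of $G_s$-invariants inside $\widehat{\Sym(\rT_s \cN)^\vee}$.

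To finish, let $\bar{G}_s \subset \GL(\rT_s \cN)$ be the image of $G_s$, so that $\widehat{\rT_s \cN} \gitq G_s = \widehat{\rT_s \cN} \gitq \bar{G}_s$ and $\bar{G}_s$ acts faithfully. The induced extension on fraction fields is Galois with group $\bar{G}_s$, so any formal automorphism of $\widehat{\rT_s \cN}$ descending to the identity on the invariant quotient must coincide, as a ring automorphism, with the action of some $\bar{g} \in \bar{G}_s$; thus $\hat{\phi} = \bar{g}$. Since $\bar{g}$ acts linearly, $\rd_s \tau = \rd \hat{\phi}|_0 = \bar{g}$, which lies in the image of $\Aut_{\cN}(s) \to \GL(\rT_s \cN)$, as required. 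The main obstacle is setting up the identification $\widehat{\cN}_s \cong [\widehat{\rT_s \cN}/G_s]$ rigorously and checking that the hypothesis genuinely transports to this formal atlas picture; once that is done, the Galois-theoretic step is routine.
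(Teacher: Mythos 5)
Your proof is correct and takes essentially the same approach as the paper's: both reduce to a Galois-theoretic observation over a local (or formal) chart $[Y/\Gamma]$ at $s$ and conclude that an automorphism inducing the identity on the GIT/invariant quotient must coincide with the action of a group element. The paper achieves the local chart via Luna's \'{e}tale slice theorem in the form of Alper--Hall--Rydh and then uses the $\Gamma$-equivariant tangent-space identification to transfer the derivative, whereas you work with formal completions and linearize via Cartan; these are interchangeable in characteristic~$0$ (your one imprecision --- that the lift $\hat{\phi}$ ``has derivative $\rd_s\tau$'' when it is only well-defined up to $\bar{G}_s$ --- is harmless since the conclusion is $\bar{G}_s$-invariant).
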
 

\begin{proof}
Let $\Gamma = \Aut_{\cN}(s)$. 
By Luna's \'{e}tale slice theorem 
(see~\cite[Theorem 2.1]{alper-luna}), we may find an integral 
affine scheme $Y = \Spec(R)$ with a $\Gamma$-action, 
a point $y \in Y$, and an involution $\tau_Y \colon Y \to Y$, 
such that: 
\begin{enumerate}
\item \label{luna-1} 
$\tau_Y(y) = y$. 
\item \label{luna-2}
There is a $\Gamma$-equivariant isomorphism $\rT_y Y \cong \rT_s\cN$ 
under  which the maps $\rd_y \tau_Y$ and $\rd_s \tau$ are identified. 
\item \label{luna-3}
If $\pi_Y \colon Y \to Y \gitq \Gamma = \Spec(R^\Gamma)$ 
denotes the GIT quotient, then $\pi_Y \circ \tau_Y = \pi_Y$. 
\end{enumerate}
Consider the ring map $R^\Gamma \to R$ corresponding to $\pi_Y$. 
Passing the fraction fields, we obtain a Galois field extension 
$K(R)^{\Gamma} \to K(R)$. 
Since $\tau_Y$ restricts to an automorphism of $K(R)$ over 
$K(R)^{\Gamma}$ by~\eqref{luna-3}, we conclude that 
$\tau_Y$ coincides with the action of an element $\gamma \in \Gamma$ 
over the generic point of $Y$, and hence $\tau_Y$ coincides with 
the action of $\gamma$ on all of $Y$. 
Now the result follows from~\eqref{luna-2} by taking the derivative 
of $\tau_Y$ at $y$. 
\end{proof}

\begin{lemma}
\label{lemma-fixed-point}
There exists $g \in G$ with $g = g^{-T}$ such that $[(H, gH)] \in \cN$, i.e. 
such that $X_{1,g}$ is smooth. 
\end{lemma}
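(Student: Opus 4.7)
The plan is to exhibit an explicit $g \in G$ with $g = g^{-T}$ such that $\Gr \cap g\Gr$ is a smooth threefold. In $G = \PGL(W)$, the condition $g = g^{-T}$ is equivalent to $g$ preserving, up to scalar, the symmetric bilinear form $q$ on $W$ induced by the chosen identification $\theta \colon W \xrightarrow{\sim} W^\vee$; hence $\Omega = \{g \in G : g = g^{-T}\}$ is a closed subvariety of $G$ of dimension $45$, essentially the projective orthogonal group $\mathrm{PO}(W,q)$.

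First I would fix a basis $e_1,\dots,e_5$ of $V$ and the resulting basis $\{e_i \wedge e_j\}$ of $W$; in these coordinates $\theta$ is the identity, so the condition $g^T g \in k^\ast \cdot I$ becomes concrete. A natural class of candidates consists of the diagonal involutions $g = \mathrm{diag}(\epsilon_1,\dots,\epsilon_{10})$ with $\epsilon_i \in \{\pm 1\}$, all of which automatically lie in $\Omega$. Such a $g$ belongs to $H \subset G$ only when the signs factor as $\epsilon_{ij} = \lambda_i \lambda_j$ for some $\lambda_k \in \{\pm 1\}$, a cocycle condition on $4$-tuples of indices that fails for most sign patterns; in particular, for most such $g$ we have $g\Gr \neq \Gr$ and $\Gr \cap g\Gr$ has the expected dimension $3$.

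It then remains to verify that for a suitable choice of signs the intersection is actually smooth. This is a finite, concrete calculation: one writes down the five Pl\"ucker relations defining $\Gr$ and the signed relations defining $g\Gr$ in the fixed coordinates, and checks that the Jacobian of the resulting ideal has maximal rank at every closed point of the intersection. Equivalently, by Lemma~\ref{lemma-non-transverse} and Lemma~\ref{lemma-projective-dual}, one checks that no $2$-plane $A \subset V$ is simultaneously contained in two $3$-planes arising as the kernels of nonzero $2$-forms whose Pl\"ucker vectors represent points of $\Gr$ and of $g\Gr$ respectively. Both formulations reduce to a small, explicit system of linear and quadratic conditions on the sign pattern $(\epsilon_1,\dots,\epsilon_{10})$.

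The main obstacle is precisely this smoothness verification on the lower-dimensional subvariety $\Omega$: smoothness is automatic for generic $g \in G$, but genericity within $\Omega$ is not formal, so one must either exhibit a specific diagonal element and carry out the finite check, or alternatively run a Bertini-style argument on the incidence variety $\{(g,x) \in \Omega \times \Gr : g^{-1}x \in \Gr\}$ to show that the projection to $\Omega$ has smooth generic fiber. The explicit construction with a diagonal involution is the most transparent and has the additional advantage of providing a concrete fixed point of $\tau$ whose tangent-space trace can later be computed directly in Lemma~\ref{lemma-trace-tau}.
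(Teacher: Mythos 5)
The main route you propose---taking $g$ to be a diagonal matrix with $\pm 1$ entries in the Pl\"ucker basis $\{e_i\wedge e_j\}$---cannot work, and this is a genuine gap. Every coordinate point $[e_i\wedge e_j]$ lies on $\Gr(2,V)$, and a diagonal $g$ fixes each of them, so all ten coordinate points belong to $\Gr\cap g\Gr$. At such a point $x=[e_i\wedge e_j]$, the tangent space $\rT_x\Gr$ is a coordinate subspace of $\rT_x\bP$ (spanned by the Pl\"ucker directions $e_i\wedge e_k$, $e_j\wedge e_k$), and since $g$ is diagonal in this basis and fixes $x$, we get $\rT_x(g\Gr)=g\cdot\rT_{g^{-1}x}\Gr=\rT_x\Gr$. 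Thus the two Grassmannians meet non-transversally at all ten coordinate points, so $X_{1,g}$ is never a smooth threefold for diagonal $g$ of this type, regardless of the sign pattern. No finite sign check can rescue this.

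Your fallback idea---a Bertini-type argument on the incidence variety over $\Omega\cong\mathrm{PO}(W,q)$---is plausible in spirit, but you don't carry it out, and it is not formal: $\mathrm{O}(W,q)$ does not act transitively on $\bP(W)$ (the quadric $\{q=0\}$ is an invariant hypersurface), so showing that the incidence variety is smooth and that its projection to $\Omega$ has smooth generic fiber needs care, and one would still have to prove the generic fiber has the expected dimension $3$. The paper takes a different and more robust route: it observes that $\mathrm{O}(10)$ and the Pfaffian construction of $X_{1,g}$ spread out over $\Spec(\bZ)$, that the smooth locus $U\subset \mathrm{O}(10)$ is open, and that $\mathrm{O}(10)\to\Spec(\bZ)$ is smooth (hence flat with open image), so nonemptiness of $U$ over some $\bF_p$ implies nonemptiness over $\bQ\subset k$. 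It then exhibits an explicit $g\in\mathrm{O}(10)(\bF_{103})$ by a Macaulay2 computation. If you want to avoid a computer check, you must give a genuinely different argument; the diagonal-involution shortcut is a dead end for the reason above.
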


\begin{proof}
The orthogonal group $\rO(10)$ can be defined as a group scheme over
$\Spec(\bZ)$.  The construction of the scheme $X_{1,g}$ makes sense
for any $g \in \rO(10)$ (with arbitrary coefficients, not just over
$k$), and the locus
\begin{equation*}
U = \set{ g\in \rO(10) \st X_{1,g} \mbox{ is smooth} } 
\end{equation*}
is a Zariski open subset of $\rO(10)$.  The group scheme $\rO(10)$ is
smooth (and in particular flat) over $\Spec(\bZ)$, hence the
image of $U$ is an open subset of $\Spec(\bZ)$.  So if $U$ is
nonempty, then its image contains the generic point of
$\Spec(\bZ)$.  In other words, if for some prime
$p\in \bZ$ we find a matrix $g\in \rO(10)(\bF_p)$ such
that $X_{1,g}$ is smooth (over $\bF_p$), it follows that a
matrix with the same property exists over $\bQ \subset k$.  

We verified the existence of such a matrix in
$\rO(10)(\bF_{103})$ by an easy \texttt{Macaulay2} computation.
The code for this computation and the explicit matrix we found are
included in Appendix~\ref{appB}.
\end{proof}

Note that $[(H, gH)] \in \cN$ as in Lemma~\ref{lemma-fixed-point} satisfies 
$\tau([(H, gH)]) = [(H,g^{-T}H)] = [(H, gH)]$. 

\begin{lemma}
\label{lemma-trace-tau}
Let $g \in G$ be as in Lemma~\ref{lemma-fixed-point}. 
Then $\tr(\rd_{[(H, gH)]}\tau) = -1$. 
\end{lemma}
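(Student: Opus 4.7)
The plan is to unpack the derivative of $\tau$ at the fixed point $s = [(H, gH)]$ using Lemma~\ref{lemma-derivative-tau}, which asserts that $\rd_s\tau$ is induced by the involution $\sigma \colon \fg \oplus \fg \to \fg \oplus \fg$, $(R_1,R_2) \mapsto (-R_1^T, -R_2^T)$. Combined with the short exact sequence
\begin{equation*}
0 \to \fg \xrightarrow{\iota} \fg/\fh \oplus \fg/\fh \to \rT_s\cN \to 0, \qquad \iota(S) = (S,\, g^{-1}Sg),
\end{equation*}
from Lemma~\ref{lemma-presentation-TN}, this reduces the computation (once descent of $\sigma$ is checked) to
\begin{equation*}
\tr(\rd_s\tau) \;=\; 2\,\tr(-{}^T\vert_{\fg/\fh}) \;-\; \tr(-{}^T\vert_{\fg}).
\end{equation*}

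First I would verify that $\sigma$ descends to an involution of $\rT_s\cN$, i.e. that it preserves $\fh \oplus \fh$ and the image $\iota(\fg)$. Preservation of $\fh \oplus \fh$ follows from the fact that $\theta \colon W \cong W^\vee$ is built as $\wedge^2$ of the fixed isomorphism $V \cong V^\vee$: for $T \in \tfh = \gl(V)$ with image $\hat T \in \tfg = \gl(W)$, the transpose $\hat T^T$ defined via $\theta$ is the image of the transpose $T^T$ defined on $V$, so $-{}^T$ on $\fg$ restricts to $-{}^T$ on $\fh$. Preservation of $\iota(\fg)$ is exactly where $g = g^{-T}$ enters: applying $\sigma$ to $\iota(S) = (S, g^{-1}Sg)$ gives $(-S^T, -g^T S^T g^{-T})$, which via $g^T = g^{-1}$ equals $(-S^T, -g^{-1}S^T g) = \iota(-S^T)$. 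Consequently $\sigma$ descends, and under the identification $\iota$ it acts on $\fg$ by $S \mapsto -S^T$.

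Finally I compute the two traces by passing to the $\GL$ level. On $\tfg = \gl(W)$ with $\dim W = 10$, the $+1$ and $-1$ eigenspaces of $S \mapsto -S^T$ are the antisymmetric and symmetric matrices, of dimensions $45$ and $55$, so the trace is $-10$; quotienting out the scalar line $k\cdot \id_W$, which is a $-1$-eigenvector, yields $\tr(-{}^T\vert_{\fg}) = -9$. The analogous calculation on $\tfh = \gl(V)$ with $\dim V = 5$ gives trace $10 - 15 = -5$, hence $\tr(-{}^T\vert_{\fh}) = -4$, and therefore $\tr(-{}^T\vert_{\fg/\fh}) = -9 - (-4) = -5$. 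Plugging in,
\begin{equation*}
\tr(\rd_s\tau) \;=\; 2(-5) - (-9) \;=\; -1.
\end{equation*}
The only genuinely substantive point is the compatibility of transposes on $V$ and on $W = \wedge^2 V$, which is immediate from the construction of $\theta$; everything else is routine eigenspace bookkeeping.
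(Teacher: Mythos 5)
Your proof is correct and follows essentially the same route as the paper: read off the derivative from Lemma~\ref{lemma-derivative-tau}, use the exact sequence from Lemma~\ref{lemma-presentation-TN}, and compute the trace of $R\mapsto -R^T$ on $\pgl(L)$ by counting (anti)symmetric matrices. The only difference is that you explicitly verify that the map $(R_1,R_2)\mapsto(-R_1^T,-R_2^T)$ preserves $\fh\oplus\fh$ and the image of $\fg$ (using $g^T=g^{-1}$), a point the paper leaves implicit; this is a sensible clarification but not a different argument.
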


\begin{proof}
By Lemma~\ref{lemma-derivative-tau} the map 
$\rd_{[(H, gH)]}\tau \colon \rT_{[(H, gH)]}\cN \to \rT_{[(H, gH)]}\cN$ 
is induced by the map
\begin{equation} 
\label{tau-derivative} 
\begin{aligned}
\fg \oplus \fg & \to \fg \oplus \fg \\
(R_1, R_2) & \mapsto (-R_1^T, -R_2^T) . 
\end{aligned}
\end{equation} 
That is, in terms of the presentation 
\begin{equation}
\label{tau-presentation}
\begin{array}{ccccccccc}
0 & \to &  \fg & \to &  \fg/\fh \oplus \fg/\fh & \to & \rT_{[(H, gH)]}\cN & \to &  0 \\ 
&& S & \mapsto &   (S, g^{-1}Sg) &  & & &
\end{array}
\end{equation}
given by \eqref{Tg1Hg2H}, $\rd_{[(H, gH)]}\tau$ is induced by \eqref{tau-derivative}. 
On each copy of $\fg$ and $\fh$ appearing in~\eqref{tau-presentation}, the map  
induced by \eqref{tau-derivative} is $R \mapsto -R^T$. 
In general, given a vector space $L$, the trace of the map 
$\pgl(L) \to \pgl(L)$ given by $R \mapsto -R^{T}$ is $-\dim(L)+1$. 
Using this and additivity of traces, the result follows. 
\end{proof}

Now we can complete the proof of Theorem~\ref{theorem-not-birational}. 
Let $g \in G$ be as in Lemma~\ref{lemma-fixed-point} 
and let $s = [(H, gH)] \in \cN$. 
Then the involution $\rd_{s}\tau$ is not in the image of $\Aut_{\cN}(s) \to \GL(\rT_{s}\cN)$ 
by Proposition~\ref{proposition-tr-involution} and Lemma~\ref{lemma-trace-tau}. 
Hence   
the morphisms $\pi_{\cN} \circ \tau$ and $\pi_{\cN}$ do not coincide 
by Lemma~\ref{lemma-fixed-point-tangent}. \qed


\section{Proof of Theorem~\ref{theorem-L-equivalence}}
\label{section-L-equivalence}

Let $X$ and $Y$ be {\GPK} double mirrors,
\begin{align*}
X & = \Gr_1 \cap \Gr_2 \subset \bP , \\
Y & = \Gr_1^\vee \cap \Gr_2^\vee \subset \bP^\vee, 
\end{align*}
corresponding to isomorphisms 
$\phi_i \colon \wedge^2V \xrightarrow{\sim} W$, $i=1,2$. 

As in the proof of \cite[Theorem 2.12]{borisov-zero-divisor}, we see that if $X$ 
is not birational to $Y$, then $[X] \neq [Y]$ in $\rK_0(\Var/k)$. 
Indeed, if $[X] = [Y]$ then $[X] = [Y] \bmod \bL$, and so $X$ is stably 
birational to $Y$ by \cite{larsen-lunts}. 
This means $X \times \bP^n$ is birational to $Y \times \bP^n$ for some $n$. 
But since $X$ and $Y$ are Calabi--Yau, 
they are the bases of the MRC fibrations \cite{KMM} of $X \times \bP^n$ and $Y \times \bP^n$, 
and hence birational. 

In particular, Theorem~\ref{theorem-not-birational} implies the second claim of Theorem~\ref{theorem-L-equivalence}. 

To prove the first claim of Theorem~\ref{theorem-L-equivalence}, we 
consider an incidence correspondence between $\Gr_1$ and $\Gr_2^{\vee}$.  
Namely, we consider the intersection 
\begin{equation*}
\rQ(\Gr_1, \Gr_2^{\vee}) = \rQ \times_{\bP \times \bPv} (\Gr_1 \times \Gr_2^{\vee}) 
\end{equation*}
of the canonical $(1,1)$ divisor $\rQ \subset \bP \times \bPv$ with the 
product $\Gr_1 \times \Gr_2^{\vee} \subset \bP \times \bPv$. 
We will calculate the class of $\rQ(\Gr_1, \Gr_2^{\vee})$ in 
$\rK_0(\Var/k)$ in two ways, using the two projections 
\begin{equation*}
\xymatrix{
& \rQ(\Gr_1, \Gr_2^{\vee}) \ar[dl]_{p_1} \ar[dr]^{p_2} & \\ 
\Gr_1 & & \Gr_2^{\vee}. 
}
\end{equation*}

Given $x \in \bP$, let $x_i = \phi_i^{-1}(x) \in \bP(\wedge^2V)$  
be the corresponding point for $i = 1,2$. 
Similarly, given $y \in \bPv$ let $y_i = \phi_i^{*}(y) \in \bP(\wedge^2V^{\vee})$ for $i=1,2$. 
Further, for a point $\omega$ in $\bP(\wedge^2V)$ or $\bP(\wedge^2V^{\vee})$, we write 
$\rk(\omega)$ for the rank of $\omega$ considered as a skew form (defined up to scalars); 
note that either $\rk(\omega) = 2$ or $\rk(\omega) = 4$. 
By definition we have 
\begin{alignat*}{2}
X & = \set{ x \in \bP \st \rk(x_1) = \rk(x_2) = 2 }  && \subset 
\Gr_1 = \set{ x \in \bP \st \rk(x_1) = 2 } ,  \\
Y  & = \set{ y \in \bPv \st \rk(y_1) = \rk(y_2) = 2 } && \subset \Gr_2^{\vee}  = \set{ y \in \bPv \st \rk(y_2) = 2 } , 
\end{alignat*} 
and hence also 
\begin{align*}
\Gr_1 \setminus X & = \set{ x \in \bP \st \rk(x_1) = 2, ~ \rk(x_2) = 4 } , \\
\Gr_2^{\vee} \setminus Y & = \set{ y \in \bPv \st \rk(y_1) = 4, ~ \rk(y_2) = 2} . 
\end{align*}
For $\omega$ in $\bP(\wedge^2V)$ or $\bP(\wedge^2V^{\vee})$, we let $H_{\omega}$ denote 
the corresponding hyperplane in the dual projective space. 
Then for $x \in \Gr_1$ and $y \in \Gr_2$ we have 
\begin{align*}
p_1^{-1}(x) & \cong H_{x_2} \cap \Gr(2,V^{\vee}) \subset \bP(\wedge^2V^{\vee}) , \\ 
p_2^{-1}(y) & \cong H_{y_1} \cap \Gr(2,V) \subset \bP(\wedge^2V). 
\end{align*}

Recall that a morphism of varieties $g \colon Z  \to S$ is called a \emph{piecewise trivial fibration  
with fiber $F$} if there is a finite partition $S = S_1 \sqcup S_2 \sqcup \cdots \sqcup S_n$, 
with each $S_i \subset Y$ a locally closed subset such that  
$g^{-1}(S_i) \cong S_i \times F$ as $S_i$-schemes. 

\begin{lemma} 
\label{lemma-fibrations} 
The following hold: 
\begin{enumerate}
\item The morphisms 
\begin{equation*}
p_1^{-1}(X) \to X  \quad \text{and} \quad p_2^{-1}(Y) \to Y
\end{equation*} 
are piecewise trivial fibrations, with fiber a hyperplane section of $\Gr(2,V)$ defined 
by a rank $2$ skew form. 
\item The morphisms 
\begin{equation*}
p_1^{-1}(\Gr_1 \setminus X) \to \Gr_1 \setminus X \quad \text{and} \quad 
p_2^{-1}(\Gr_2 \setminus Y) \to \Gr_2 \setminus Y 
\end{equation*}
are piecewise trivial fibrations, with fiber a hyperplane section of $\Gr(2,V)$ defined 
by a rank $4$ skew form. 
\end{enumerate}
\end{lemma}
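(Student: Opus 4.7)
The plan is to identify each of the four morphisms as the pullback of a $\PGL(V)$-equivariant universal family over a $\PGL(V)$-homogeneous base, and then exploit homogeneity to obtain piecewise (indeed Zariski local) triviality.

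First, I would consolidate the formula $p_1^{-1}(x) \cong H_{x_2} \cap \Gr(2, V^{\vee})$ stated just before the lemma into the statement that $p_1^{-1}(\Gr_1) \to \Gr_1$ is the pullback, along the morphism $x \mapsto x_2 = \phi_2^{-1}(x)$, of the \emph{universal hyperplane section family}
\begin{equation*}
\cF = \set{(\omega, y) \in \bP(\wedge^2V) \times \Gr(2, V^{\vee}) \st \langle \omega, y \rangle = 0} \longrightarrow \bP(\wedge^2V).
\end{equation*}
The condition $X \subset \Gr_2$ says precisely that $x \mapsto x_2$ restricts to a morphism $X \to \Gr(2,V)$ into the rank-$2$ locus, while $\Gr_1 \setminus X$ lands in the complementary rank-$4$ locus $\bP(\wedge^2V) \setminus \Gr(2,V)$ by definition.

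Second, the natural $\PGL(V)$-action on $\bP(\wedge^2V)$ and on $\bP(\wedge^2V^{\vee})$ preserves the canonical pairing, so $\cF \to \bP(\wedge^2V)$ is $\PGL(V)$-equivariant. Since $\GL(V)$ acts transitively on nonzero skew forms of fixed rank, each of the two rank strata is a single $\PGL(V)$-orbit, identifying it with $\PGL(V)/P_r$ where $P_r$ is the stabilizer of a chosen rank-$r$ form $\omega_r$. Under this identification, the restriction of $\cF$ to each stratum becomes the associated bundle $\PGL(V) \times^{P_r} F_r \to \PGL(V)/P_r$ with fiber $F_r = H_{\omega_r} \cap \Gr(2, V^{\vee})$, which, via a chosen isomorphism $V \cong V^{\vee}$, is exactly a hyperplane section of $\Gr(2,V)$ defined by a rank-$r$ skew form — the fiber named in the lemma.

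Finally, I would invoke the standard fact that such a homogeneous bundle is Zariski locally trivial: a rational section of $\PGL(V) \to \PGL(V)/P_r$ trivializes the bundle over some dense open $U$, and since $\PGL(V)$ acts transitively, finitely many translates $g_1 U, \dots, g_n U$ cover $\PGL(V)/P_r$ by quasi-compactness, each yielding a trivialization. Piecewise triviality is preserved by pullback, which settles both claims for $p_1$; the argument for $p_2$ is symmetric, with the roles of $V, V^{\vee}$ and $\Gr_1, \Gr_2^{\vee}$ interchanged. The main obstacle is mild and mostly bookkeeping — tracking the distinction between $\bP(\wedge^2V)$ and $\bP(\wedge^2V^{\vee})$ via the pairing, and confirming that $x \mapsto x_2$ genuinely lands in the asserted rank stratum — but the substance of the proof is entirely the homogeneity of $\cF$ over each stratum.
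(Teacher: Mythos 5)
Your proof is correct and is essentially a spelled-out version of the paper's argument: the paper proves the lemma in one sentence, citing Martin's Lemma~3.3 together with the fact that skew forms over $k$ can be algebraically normalized according to their rank. Your reformulation in terms of the universal hyperplane-section family $\cF \to \bP(\wedge^2V)$, the rank stratification, and the $\PGL(V)$-homogeneity of each stratum is a clean way of packaging the same idea. The one step worth a second look is the assertion that $\PGL(V) \to \PGL(V)/P_r$ has a rational section: quotient maps of algebraic groups need not have rational sections in general (the stabilizer being ``special'' in Serre's sense is what guarantees this). It does hold here — for the rank-$2$ stratum $P_r$ is a parabolic so the quotient is Zariski locally trivial, and for the rank-$4$ stratum one can produce the section directly by algebraically choosing a basis putting a rank-$4$ form into the standard shape $e_1\wedge e_2 + e_3\wedge e_4$ — but that direct normalization is precisely the ``standard forms'' fact the paper leans on, and it is slightly more elementary than appealing to homogeneity plus a rational section. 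So the two routes differ only in how the normal-form fact is dressed up: the paper uses it directly, you repackage it as local triviality of a homogeneous fibration, and both land in the same place.
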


\begin{proof}
By the above discussion, this follows as in~\cite[Lemma 3.3]{martin} from the fact that 
skew forms over $k$ can be put into one of the standard forms according to their rank. 
\end{proof}

Next we calculate the class of the fibers appearing in Lemma~\ref{lemma-fibrations}. 

\begin{lemma}
\label{lemma-lambda-section}
If $\omega \in \bP(\wedge^2V^{\vee})$, then 
\begin{equation*}
[H_{\omega} \cap \Gr(2,V)] = 
\begin{cases}
(\bL^2+\bL+1)(\bL^3+\bL^2+1) & \text{if } \rk(\omega) = 2 , \\ 
(\bL^2+1)(\bL^3+\bL^2+\bL+1) & \text{if } \rk(\omega) = 4 . 
\end{cases}
\end{equation*}
\end{lemma}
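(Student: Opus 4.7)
The plan is to interpret $H_\omega \cap \Gr(2, V)$ as the locus of $2$-planes $A \subset V$ that are isotropic with respect to $\omega$ (i.e.\ $\omega|_A = 0$), and to stratify this locus according to how $A$ meets $K := \ker(\omega)$. Each stratum will be a Zariski-locally trivial projective or affine bundle over a base with known class in $\rK_0(\Var/k)$, and summing the contributions will yield the claimed formulas.

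When $\rk(\omega) = 2$ we have $\dim K = 3$ and the induced form on the $2$-dimensional quotient $V/K$ is nondegenerate. A short check gives that $A$ is $\omega$-isotropic if and only if $A \cap K \neq 0$: indeed, if $A \cap K = 0$ then $A \xrightarrow{\sim} V/K$ inherits the nonzero form, and otherwise the spanning argument shows $\omega|_A = 0$ automatically. To compute $[H_\omega \cap \Gr(2, V)]$, I will introduce the incidence variety
\begin{equation*}
I = \{ (\ell, A) \in \bP(K) \times \Gr(2, V) \st \ell \subset A \cap K \},
\end{equation*}
and evaluate $[I]$ in two ways. The projection to $\bP(K) \cong \bP^2$ realizes $I$ as a Zariski-locally trivial $\bP^3$-bundle (the projectivization of a rank $4$ vector bundle on $\bP^2$), while the projection to $H_\omega \cap \Gr(2, V)$ is bijective over the open stratum $\{A : \dim(A \cap K) = 1\}$ and a $\bP^1$-bundle over the closed stratum $\{A \subset K\} \cong \Gr(2, K) \cong \bP^2$. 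Equating the two expressions gives $[H_\omega \cap \Gr(2, V)] = [I] - \bL \cdot [\Gr(2, K)]$, and routine simplification yields the first formula.

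When $\rk(\omega) = 4$ we have $\dim K = 1$ and $V/K$ is a $4$-dimensional symplectic space. I stratify by whether $K \subset A$. The closed stratum $\{K \subset A\}$ is isomorphic to $\bP(V/K) \cong \bP^3$. On the open stratum $\{K \cap A = 0\}$, the assignment $A \mapsto \bar A := \image(A \hookrightarrow V/K)$ lands in the Lagrangian Grassmannian $\mathrm{LG}(2, V/K)$ of isotropic $2$-planes, with fiber over $\bar A$ a torsor under $\mathrm{Hom}(\bar A, K) \cong \bA^2$, giving a Zariski-locally trivial $\bA^2$-bundle. Since $\mathrm{LG}(2, 4)$ is a smooth $3$-dimensional quadric---realized as a nondegenerate hyperplane section of the Klein quadric $\Gr(2, V/K) \subset \bP^5$---its class is $1 + \bL + \bL^2 + \bL^3$. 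Summing the two strata produces $[\bP^3] + \bL^2 \cdot [\mathrm{LG}(2,4)] = (1 + \bL^2)(1 + \bL + \bL^2 + \bL^3)$, which is the second formula.

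There is no substantive obstacle; the one point that deserves verification is Zariski-local triviality of the projective and affine bundles, but this is automatic because they all arise as projectivizations or torsors under honest vector bundles on their bases. The remaining content is the geometric isotropy check in each rank, the identification of $\mathrm{LG}(2,4)$ as a smooth three-dimensional quadric, and a brief polynomial identity in $\bL$.
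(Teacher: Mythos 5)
Your argument is correct and reaches the right answer, but it organizes both cases somewhat differently from the paper. For $\rk(\omega)=2$, the paper stratifies $H_\omega\cap\Gr(2,V)$ directly by $\dim(A\cap K)$ and fibers the open stratum over $\bP(K)$ with fiber $\bP^3\setminus\bP^1$; you instead pass through the incidence variety $I\subset\bP(K)\times\Gr(2,V)$ and count $[I]$ two ways. These are essentially the same computation in different packaging, and the incidence-variety bookkeeping is a clean way to avoid writing down the fiber complement explicitly. For $\rk(\omega)=4$, the difference is more substantive: the paper chooses an auxiliary $4$-plane $V_4\subset V$ complementary to $K$ on which $\omega$ is nondegenerate, stratifies by $\dim(A\cap V_4)$, and identifies the closed stratum with a smooth quadric threefold ad hoc; you stratify by whether the $1$-dimensional kernel $K$ lies inside $A$, which is more intrinsic (no choice of $V_4$), and you recognize the base of the open stratum's $\bA^2$-bundle as the Lagrangian Grassmannian $\mathrm{LG}(2,V/K)\cong Q^3$. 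Your version trades a slightly more sophisticated identification ($\mathrm{LG}(2,4)$ as a quadric threefold, and the class of a smooth odd-dimensional quadric) for a cleaner, choice-free stratification; the paper's is more elementary but requires the auxiliary splitting. Both are valid, and the Zariski-local triviality assertions you flag at the end do hold for the reasons you give (projectivizations and torsors under vector bundles, the latter even split after choosing a complement to $K$).
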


\begin{proof}
First assume $\rk(\omega) = 2$. Then the kernel $K \subset V$ of $\omega$ regarded as a skew 
form has $\dim K = 3$, and 
\begin{equation*}
H_{\omega} \cap \Gr(2,V) = \set{ A \in \Gr(2,V) \st A \cap K \neq 0 } . 
\end{equation*}
Consider the closed subset 
\begin{equation*}
Z = \set{ A \in \Gr(2,V) \st A \subset K } \subset H_{\omega} \cap \Gr(2,V), 
\end{equation*}
with open complement 
\begin{equation*}
U = \set{A \in \Gr(2,V) \st \dim(A \cap K) = 1 } \subset H_{\omega} \cap \Gr(2,V). 
\end{equation*}
Note that $Z \cong \bP^2$. 
Further, the natural morphism $U \to \bP(K) \cong \bP^2$ is a Zariski locally trivial fibration, whose fiber 
over $[ v ] \in \bP(K)$ is the complement in 
$\bP(V/\langle v \rangle) \cong \bP^3$ of $\bP(K/\langle v \rangle) \cong \bP^1$. 
Hence we have 
\begin{equation*}
[H_{\omega} \cap \Gr(2,V)] = [\bP^2] + [\bP^2]([\bP^3] - [\bP^1]) 
= (\bL^2+\bL+1)(\bL^3+\bL^2+1). 
\end{equation*}

Now assume $\rk(\omega) = 4$. 
Let $V_4 \subset V$ be a $4$-dimensional subspace such that the restriction of the form $\omega$ 
to $V_4$ has full rank. 
Consider the closed subset 
\begin{equation*}
Z = \set{ A \in H_{\omega} \cap \Gr(2,V) \st A \subset V_4 } \subset H_{\omega} \cap \Gr(2,V), 
\end{equation*}
with open complement 
\begin{equation*}
U = \set{ A \in H_{\omega} \cap \Gr(2,V) \st \dim(A \cap V_4) = 1 } \subset H_{\omega} \cap \Gr(2,V). 
\end{equation*}
Note that $Z$ is isomorphic to a smooth quadric hypersurface in $\bP^4$, whose class 
is well-known to be $[Z] = [\bP^3]$. 
Further, the natural morphism $U \to \bP(V_4) \cong \bP^3$ is a Zariski locally trivial fibration, 
whose fiber over $[ v ] \in \bP(V_4)$ consists of $[ v' ] \in \bP(V/\langle v \rangle )$ 
such that $\omega(v,v') = 0$ and $v' \notin V_4/\langle v \rangle$. That is, the fiber is isomorphic to 
the complement in 
$\bP(\langle v \rangle^{\perp_{\omega}}/\langle v \rangle) \cong \bP^2$ of 
$\bP( (\langle v \rangle^{\perp_{\omega}} \cap V_4)/\langle v \rangle) \cong \bP^1$, 
where $\langle v \rangle^{\perp_{\omega}} \subset V$ denotes the orthogonal of $\langle v \rangle \subset V$ 
with respect to the skew form $\omega$. 
Hence we have 
\begin{equation*}
[H_{\omega} \cap \Gr(2,V)] = [\bP^3] + [\bP^3]([\bP^2] - [\bP^1]) = (\bL^2 + 1)(\bL^3 + \bL^2 + \bL + 1), 
\end{equation*}
as claimed. 
\end{proof}

Now we can finish the proof of Theorem~\ref{theorem-L-equivalence}. 
Using the first projection $p_1$, we have 
\begin{equation*}
[\rQ(\Gr_1, \Gr_2^{\vee})] = [p_1^{-1}(X)] + [p_1^{-1}(\Gr_1 \setminus X)]. 
\end{equation*}
But if $g \colon Z \to S$ is a piecewise trivial fibration with fiber $F$, then 
$[Z] = [S][F]$. 
Hence using Lemmas~\ref{lemma-fibrations} and~\ref{lemma-lambda-section}, we find 
\begin{align*}
[\rQ(\Gr_1, \Gr_2^{\vee})] & = [X](\bL^2+\bL+1)(\bL^3+\bL^2+1) + 
([\Gr(2,V)] - [X]) (\bL^2+1)(\bL^3+\bL^2+\bL+1) , \\ 
& = [X] \bL^4 + [\Gr(2,V)] (\bL^2+1)(\bL^3+\bL^2+\bL+1) .
\end{align*}
The same argument applied to the second projection $p_2$ shows 
\begin{equation*}
[\rQ(\Gr_1, \Gr_2^{\vee})]  = [Y] \bL^4 + [\Gr(2,V)] (\bL^2+1)(\bL^3+\bL^2+\bL+1) . 
\end{equation*}
We conclude 
\begin{equation*}
([X]-[Y]) \bL^4 = 0 . \eqno\qed
\end{equation*}


\appendix
\section{Borel--Weil--Bott computations}
\label{appendix}
The purpose of this appendix is to collect some coherent cohomology 
computations on Grassmannians and {\GPK} threefolds, which are invoked in the main text. 
The key tool is Borel--Weil--Bott, which we review in \S\ref{subsection-BWB}. 

\subsection{Borel--Weil--Bott}
\label{subsection-BWB}
For this subsection, we let $V$ denote an $n$-dimensional vector space over $k$ 
(in the rest of the paper $n=5$).  
The Borel--Weil--Bott Theorem for $\GL(V)$ allows us to compute the coherent cohomology of 
$\GL(V)$-equivariant bundles on a Grassmannian $\Gr(r,V)$ (in the rest of the paper we only 
need the case $r=2$). 
To state the result, we need some notation. 
Our exposition follows~\cite[\S{2.6}]{kuznetsov-HPD-lines}. 

The weight lattice of $\GL(V)$ is isomorphic to $\bZ^n$ via the map taking the $d$-th 
fundamental weight, i.e.\ the highest weight of $\wedge^dV$, to the sum of the first 
$d$ basis vectors of $\bZ^n$. Under this isomorphism, the dominant integral weights 
of $\GL(V)$ correspond to nonincreasing sequences of integers 
$\lambda = (\lambda_{1}, \dots, \lambda_{n})$. 
For such a $\lambda$, we denote by $\Sigma^{\lambda}V$ the corresponding irreducible 
representation of $\GL(V)$ of highest weight $\lambda$. 
The only facts we shall need about these representations are the following: 
\begin{enumerate}
\item If $\lambda = (1, \dots, 1, 0, \dots, 0)$ with the first $d$ entries equal to $1$, then
$\Sigma^{\lambda}V = \wedge^d V$. 

\item If $\lambda = (\lambda_1, \dots, \lambda_n)$ and $\mu =  (\lambda_1+m, \dots, \lambda_n+m)$ for some $m \in \bZ$, 
then there is an isomorphism of $\GL(V)$-representations
$\Sigma^{\mu}V \cong \Sigma^{\lambda}V \otimes \det(V)^{\otimes m}$. 

\item Given $\lambda = (\lambda_1, \dots, \lambda_n)$, set 
$\lambda^\vee = (-\lambda_{n}, -\lambda_{n-1}, \dots, -\lambda_{1})$. 
Then there is an isomorphism of $\GL(V)$-representations
$\Sigma^{\lambda^\vee}V \cong (\Sigma^{\lambda}V)^\vee$.
\end{enumerate}

The construction $V \mapsto \Sigma^{\lambda}V$ for a dominant integral weight $\lambda$ 
globalizes to vector bundles over a scheme, and the above identities continue to hold. 
We are interested in the case where the base scheme is the Grassmannian $\Gr(r,V)$. 
Denote by $\cU$ the tautological rank $r$ bundle on $\Gr(r,V)$, and by $\cQ$ the 
rank $n-r$ quotient of $V \otimes \cO_{\Gr(r,V)}$ by $\cU$, so that there is an exact sequence
\begin{equation*}
0 \rightarrow \cU \rightarrow V \otimes \cO \rightarrow \cQ \rightarrow 0.
\end{equation*}
Then every $\GL(V)$-equivariant bundle on $\Gr(r,V)$ is of the form 
$\Sigma^{\alpha}\cU^\vee \otimes \Sigma^{\beta} \cQ^\vee$ for some 
nonincreasing sequences of integers $\alpha \in \bZ^{r}$ and $\beta \in \bZ^{n-r}$.

The symmetric group $\rS_n$ 
acts on the weight lattice $\bZ^n$ by permuting 
the factors. Denote by $\ell: \rS_n \rightarrow \bZ$ the standard length function. 
We say $\lambda \in \bZ^n$ is \emph{regular} if all of its components are distinct; 
in this case, there is a unique $\sigma \in \rS_n$ such that $\sigma(\lambda)$ is a 
strictly decreasing sequence. Finally, let 
\begin{equation*}
\rho = (n,n-1, \dots, 2, 1) \in \bZ^n 
\end{equation*}
be the sum of the fundamental weights. 

The following result can be deduced from the usual statement of 
Borel--Weil--Bott by pushing forward equivariant line bundles on 
the flag variety to the Grassmannian. 
For a vector space $L$ and an integer $p$, we write $L[p]$ for 
the single-term complex of vector spaces with $L$ in degree $-p$. 

\begin{proposition}
\label{proposition-BWB}
Let the notation be as above. Let $\alpha \in \bZ^r$ and $\beta \in \bZ^{n-r}$ be 
nonincreasing sequences of integers, and let $\lambda = (\alpha, \beta) \in \bZ^n$ be 
their concatenation. If $\lambda + \rho$ is not regular, then 
\begin{equation*}
\RGamma(\Gr(r,V), \Sigma^{\alpha}\cU^\vee \otimes \Sigma^{\beta} \cQ^\vee) \cong 0
\end{equation*}
If $\lambda + \rho$ is regular and $\sigma \in \rS_n$ is the unique element such 
that $\sigma(\lambda + \rho)$ is a strictly decreasing sequence, then
\begin{equation*}
\RGamma(\Gr(r,V), \Sigma^{\alpha}\cU^\vee \otimes \Sigma^{\beta} \cQ^\vee) 
\cong \Sigma^{\sigma(\lambda + \rho) - \rho} V^\vee[-\ell(\sigma)].
\end{equation*}
\end{proposition}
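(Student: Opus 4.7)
The plan is to reduce this to the classical Borel--Weil--Bott theorem on the full flag variety $\mathrm{Fl}(V)$, which I would treat as a black box. Consider the projection $\pi \colon \mathrm{Fl}(V) \to \Gr(r,V)$ that sends a full flag to its $r$-dimensional step. This is a smooth projective $\GL(V)$-equivariant morphism whose fiber over $[U \subset V]$ is the product $\mathrm{Fl}(U) \times \mathrm{Fl}(V/U)$. For any $\lambda \in \bZ^n$, write $\mathcal{L}_\lambda$ for the associated $\GL(V)$-equivariant line bundle on $\mathrm{Fl}(V)$, and take $\lambda = (\alpha, \beta)$ as in the statement.

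The first step is to compute $\mathrm{R}\pi_* \mathcal{L}_\lambda$. Because $\alpha$ and $\beta$ are nonincreasing by hypothesis, the Borel--Weil portion of classical BWB (the dominant case, with cohomology concentrated in degree zero) computes the cohomology of $\mathcal{L}_\alpha$ on $\mathrm{Fl}(U)$ as $\Sigma^\alpha U^\vee$, and similarly for $\beta$ on $\mathrm{Fl}(V/U)$. Applying Künneth fiberwise gives $\Sigma^\alpha U^\vee \otimes \Sigma^\beta (V/U)^\vee$ in degree $0$, and by cohomology-and-base-change together with $\GL(V)$-equivariance these assemble to
\begin{equation*}
\mathrm{R}\pi_* \mathcal{L}_\lambda \;\cong\; \Sigma^\alpha \cU^\vee \otimes \Sigma^\beta \cQ^\vee,
\end{equation*}
with vanishing higher direct images. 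Second, I would apply classical Borel--Weil--Bott on $\mathrm{Fl}(V)$ directly: $\RGamma(\mathrm{Fl}(V), \mathcal{L}_\lambda)$ vanishes when $\lambda + \rho$ is singular, and is $\Sigma^{\sigma(\lambda+\rho)-\rho} V^\vee$ placed in cohomological degree $\ell(\sigma)$ otherwise, where $\sigma \in \rS_n$ is the unique permutation making $\sigma(\lambda+\rho)$ strictly decreasing. Combining the two computations through the Leray spectral sequence, which degenerates thanks to the vanishing of $\mathrm{R}^{>0}\pi_* \mathcal{L}_\lambda$, yields
\begin{equation*}
\RGamma(\Gr(r,V), \Sigma^\alpha \cU^\vee \otimes \Sigma^\beta \cQ^\vee) \;\cong\; \RGamma(\mathrm{Fl}(V), \mathcal{L}_\lambda),
\end{equation*}
which is exactly the content of the proposition.

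The main obstacle is not conceptual but notational: one must pin down conventions so that the weight $(\alpha, \beta)$ really corresponds to $\Sigma^\alpha \cU^\vee \otimes \Sigma^\beta \cQ^\vee$ rather than to a version twisted by duals or by a character of the Levi, and so that the action of $\rS_n$ on $\bZ^n$ described in the statement matches the Weyl-group action used upstairs. A sign or duality slip at this step would shift $\sigma(\lambda+\rho)-\rho$ and derail the formula. Once the bookkeeping is fixed, however, the proof is essentially formal, with all genuine content deferred to classical Borel--Weil--Bott on $\mathrm{Fl}(V)$.
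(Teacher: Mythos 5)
Your proposal matches the paper's approach exactly: the paper's ``proof'' consists of the single remark that the result can be deduced from the usual Borel--Weil--Bott theorem by pushing forward equivariant line bundles on the flag variety to the Grassmannian, and your sketch (projection $\pi \colon \mathrm{Fl}(V) \to \Gr(r,V)$, fiberwise Borel--Weil giving $\mathrm{R}\pi_* \mathcal{L}_\lambda \cong \Sigma^\alpha \cU^\vee \otimes \Sigma^\beta \cQ^\vee$, then Leray) is precisely the intended argument. The convention-matching you flag as the main obstacle is real but routine, and your level of detail already exceeds what the paper records.
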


For $r = 2$ we express the normal bundle of the Pl\"{u}cker embedding 
$\Gr(2,V) \subset \bP(\wedge^2V)$ in a form that is well-suited to applying Proposition~\ref{proposition-BWB}. 

\begin{lemma}
\label{lemma-normal-bundle}
The normal bundle of $\Gr(2,V) \subset \bP(\wedge^2V)$ satisfies 
\begin{equation*}
\rN_{\Gr(2,V)/\bP(\wedge^2V)} \cong \wedge^2 \cQ(1) \cong (\wedge^{n-4} \cQ^\vee)(2) 
\end{equation*}
where $n = \dim V$. 
\end{lemma}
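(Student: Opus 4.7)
The plan is to derive the isomorphism $\rN_{\Gr(2,V)/\bP(\wedge^2V)} \cong \wedge^2\cQ(1)$ from the Euler sequence on $\bP(\wedge^2V)$ combined with the natural filtration on $\wedge^2(V\otimes \cO_{\Gr})$; the second isomorphism of the lemma is then a purely formal consequence.

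First I would dispose of the equivalence $\wedge^2\cQ(1) \cong (\wedge^{n-4}\cQ^\vee)(2)$. Since $\cQ$ has rank $n-2$, the standard identity $\wedge^k\cE \cong \wedge^{r-k}\cE^\vee \otimes \det\cE$ for a rank $r$ bundle $\cE$, applied with $k=2$ and $r=n-2$, gives $\wedge^2\cQ \cong \wedge^{n-4}\cQ^\vee \otimes \det\cQ$. Because $\det\cU^\vee = \cO(1)$ on $\Gr(2,V)$ and $\det(V\otimes\cO) = \det\cU \otimes \det\cQ$ is trivial, we have $\det\cQ \cong \cO(1)$, and the desired identification follows after twisting by $\cO(1)$.

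For the main isomorphism, let $\bP = \bP(\wedge^2V)$. Restricting the Euler sequence of $\bP$ to $\Gr = \Gr(2,V)$ gives
\begin{equation*}
0 \to \cO_{\Gr} \to \wedge^2V \otimes \cO_{\Gr}(1) \to \rT_{\bP}|_{\Gr} \to 0,
\end{equation*}
while the two-step filtration on $\wedge^2(V\otimes\cO_{\Gr})$ induced by $0 \to \cU \to V\otimes\cO_{\Gr} \to \cQ \to 0$ yields a short exact sequence
\begin{equation*}
0 \to K \to \wedge^2V \otimes \cO_{\Gr} \to \wedge^2\cQ \to 0
\end{equation*}
together with an exact sequence $0 \to \wedge^2\cU \to K \to \cU\otimes\cQ \to 0$. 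Twisting this last sequence by $\cO(1) = \det\cU^\vee = (\wedge^2\cU)^{-1}$, the kernel becomes $\cO_{\Gr}$, and the quotient $\cU\otimes\cQ\otimes\det\cU^\vee$ is canonically identified with $\cU^\vee\otimes\cQ = \rT_{\Gr}$, using the rank-two identity $\cU\otimes\det\cU^\vee \cong \cU^\vee$. Thus $K(1)$ is an extension of $\rT_{\Gr}$ by $\cO_{\Gr}$.

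To finish, I would apply the snake lemma (or the nine-lemma) to the commutative diagram comparing
\begin{equation*}
0 \to \cO_{\Gr} \to K(1) \to \rT_{\Gr} \to 0 \qquad\text{and}\qquad 0 \to \cO_{\Gr} \to \wedge^2V\otimes\cO_{\Gr}(1) \to \rT_{\bP}|_{\Gr} \to 0,
\end{equation*}
whose vertical inclusions are the identity on $\cO_{\Gr}$ and the inclusion $K(1)\hookrightarrow \wedge^2V\otimes\cO_{\Gr}(1)$ with cokernel $\wedge^2\cQ(1)$. The conclusion is $\rT_{\bP}|_{\Gr}/\rT_{\Gr} \cong \wedge^2\cQ(1)$, which is the statement of the lemma. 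The only subtle point is the verification that the vertical map $K(1) \to \wedge^2V\otimes\cO_{\Gr}(1)$ identifies the subsheaf $\cO_{\Gr}\subset K(1)$ with the Euler subsheaf $\cO_{\Gr}\subset \wedge^2V\otimes\cO_{\Gr}(1)$; this is a direct check using the fact that both copies arise as the image of $\det\cU\otimes\det\cU^\vee$ under the natural maps.
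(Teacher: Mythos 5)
Your argument is correct and takes essentially the same route as the paper's: both compare the restricted Euler sequence of $\bP(\wedge^2V)$ with the filtration of $\wedge^2(V\otimes\cO_{\Gr})$ induced by the tautological sequence, conclude by the snake lemma, and deduce the second isomorphism from $\det\cQ\cong\cO(1)$. The only cosmetic difference is that the paper's $3\times3$ diagram uses the column $0\to\cU^\vee\otimes\cU\to\cU^\vee\otimes V\to\rT_{\Gr}\to 0$ in place of your extension $0\to\cO_{\Gr}\to K(1)\to\rT_{\Gr}\to 0$, which is its pushout along the trace map $\cU^\vee\otimes\cU\twoheadrightarrow\cO$.
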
 

\begin{proof}
The normal bundle fits into a commutative diagram 
\begin{equation*}
\xymatrix{
& 0 \ar[d] & 0 \ar[d] & & \\
& \cU^\vee \otimes \cU \ar[d] \ar[r] & \cO \ar[d] & & \\
& \cU^{\vee} \otimes V \ar[d] \ar[r] & \wedge^2V \otimes \cO(1) \ar[r] \ar[d] & \cE \ar[d] \ar[r] & 0 \\ 
0 \ar[r] & \rT_{\Gr(2,V)} \ar[d] \ar[r] & \rT_{\bP(\wedge^2V)} \vert_{\Gr(2,V)} \ar[d] \ar[r] & \rN_{\Gr(2,V)/\bP(\wedge^2V)} \ar[r] & 0 \\
& 0 & 0 & & 
}
\end{equation*}
with exact rows and columns. 
Here, the map $\cU^\vee \otimes \cU \to \cO$ is given by evaluation. 
The map $\cU^\vee \otimes V \to \wedge^2V \otimes \cO(1)$ can be described as follows. 
Since $\det(\cU^\vee) \cong \cO(1)$ there is a natural isomorphism $\cU^\vee \cong \cU(1)$, 
and the map in question is the composition 
\begin{equation*}
\cU^\vee \otimes V \cong \cU \otimes V(1) \hookrightarrow V \otimes V \otimes \cO(1) 
\to \wedge^2V \otimes \cO(1). 
\end{equation*}
The sheaf $\cE$ is by definition the cokernel of this map. 
Due to the exact sequence 
\begin{equation*}
0 \to \cU \to V \otimes \cO \to \cQ \to 0
\end{equation*}
we therefore have an isomorphism
$\cE \cong (\wedge^2\cQ)(1)$.  
Hence also $\cE \cong (\wedge^{n-4} \cQ^\vee)(2)$ in view of the isomorphism $\det(\cQ) \cong \cO(1)$. 
It remains to note that $\cE \cong \rN_{\Gr(2,V)/\bP(\wedge^2V)}$ by the snake lemma. 
\end{proof}

\subsection{Computations on $\Gr$}
\label{subsection-BWB-Gr}
From now on, we assume $\dim V = 5$, 
fix an identification $\wedge^2V \cong W$, 
and let $\Gr \subset \bP$ denote the corresponding 
embedded Grassmannian $\Gr(2,V)$. 

\begin{lemma}
\label{lemma-IGr}
The ideal sheaf $\cI_{\Gr/\bP}$ of $\Gr \subset \bP$ admits a resolution of the form 
\begin{equation}
\label{resolution-IGr}
0 \to \cO(-5) \to V^\vee \otimes \cO(-3) \to V \otimes \cO(-2) \to \cI_{\Gr/\bP} \to 0 . 
\end{equation}
\end{lemma}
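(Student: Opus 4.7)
The plan is to realize the displayed sequence as the Buchsbaum--Eisenbud resolution associated with the ideal of $4\times 4$ Pfaffians of a generic $5\times 5$ skew-symmetric matrix. Indeed, $\Gr \subset \bP = \bP(\wedge^2 V)$ is cut out by the Pl\"{u}cker relations, which are exactly these Pfaffians, and $\cI_{\Gr/\bP}$ has the expected codimension $\dim\bP - \dim\Gr = 3$, which is the codimension required by the structure theorem.

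First, I would construct the three maps explicitly and $\GL(V)$-equivariantly. The tautological section of $\wedge^2 V \otimes \cO_{\bP}(1)$ provides a skew morphism of vector bundles
\begin{equation*}
M \colon V^{\vee} \otimes \cO_{\bP}(-1) \to V \otimes \cO_{\bP},
\end{equation*}
and twisting by $\cO(-2)$ gives the middle arrow $V^{\vee} \otimes \cO(-3) \to V \otimes \cO(-2)$. The rightmost arrow $V \otimes \cO(-2) \to \cO_{\bP}$ is the Pfaffian map, sending $v \in V$ to the Pfaffian of the $4\times 4$ principal submatrix of $M$ obtained by deleting the row and column indexed by $v$ (after a choice of basis of $V$, or invariantly via the polarization of $\omega \mapsto \omega\wedge\omega$ using $\wedge^4V \cong V^{\vee}\otimes\det V$). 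Its image is $\cI_{\Gr/\bP}$ because $\Gr$ is cut out scheme-theoretically by these Pfaffians. The leftmost arrow $\cO(-5) \to V^{\vee} \otimes \cO(-3)$ is the transpose of the Pfaffian map, which is forced by the self-duality of the Buchsbaum--Eisenbud complex (the twist $-5$ reflects $\omega_{\Gr} \cong \cO_{\Gr}(-5)$ and the Gorenstein property).

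Second, exactness follows from the Buchsbaum--Eisenbud structure theorem for Gorenstein ideals of codimension $3$ generated by submaximal Pfaffians of a skew-symmetric matrix: once the codimension and the explicit Pfaffian presentation are in place, the resolution is automatic. A more hands-on alternative would be to verify exactness on each of the standard Pl\"ucker opens of $\bP$, where $M$ can be brought into normal form and the complex reduces to a tractable Koszul-type complex. The main obstacle, more than exactness itself, is careful bookkeeping of the $\GL(V)$-equivariant structure: one must keep track of $V$ versus $V^{\vee}$ (and of the various twists by $\det V$) so that the three maps assemble equivariantly into the sequence exactly as stated, rather than up to an ambiguous choice of identification.
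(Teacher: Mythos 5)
Your proposal is correct and matches the paper's approach: the paper simply cites Buchsbaum--Eisenbud (and Kanazawa for a convenient statement) after observing that $\Gr \subset \bP$ is a codimension $3$ Pfaffian variety, which is exactly the structure theorem you invoke. You have merely spelled out the construction of the maps and the equivariant bookkeeping in more detail than the paper bothers to.
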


\begin{proof}
By regarding $\Gr \subset \bP$ as a Pfaffian variety, this follows from \cite{BE}  
(see \cite[Theorem~2.2]{kanazawa} for a statement of the result in the form that we apply it). 
\end{proof}

\begin{lemma}
\label{lemma-restriction-TPZ}
The restriction map $\rH^0(\bP, \rT_{\bP}) \to \rH^0(\Gr, \rT_{\bP}\vert_{\Gr})$ is an isomorphism. 
\end{lemma}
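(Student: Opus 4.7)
The plan is to reduce the statement to a cohomology vanishing on $\bP$ and then invoke the resolution of $\cI_{\Gr/\bP}$ from Lemma~\ref{lemma-IGr}. Since $\rT_{\bP}$ is locally free, tensoring the ideal sheaf sequence of $\Gr \subset \bP$ by $\rT_{\bP}$ gives a short exact sequence
\begin{equation*}
0 \to \cI_{\Gr/\bP} \otimes \rT_{\bP} \to \rT_{\bP} \to \rT_{\bP}\vert_{\Gr} \to 0.
\end{equation*}
Taking global sections, I see that the restriction map $\rH^0(\bP, \rT_{\bP}) \to \rH^0(\Gr, \rT_{\bP}\vert_{\Gr})$ is an isomorphism provided
\begin{equation*}
\rH^0(\bP, \cI_{\Gr/\bP} \otimes \rT_{\bP}) = \rH^1(\bP, \cI_{\Gr/\bP} \otimes \rT_{\bP}) = 0.
\end{equation*}

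Next I would tensor the resolution from Lemma~\ref{lemma-IGr} by $\rT_{\bP}$ to obtain a resolution
\begin{equation*}
0 \to \rT_{\bP}(-5) \to V^{\vee} \otimes \rT_{\bP}(-3) \to V \otimes \rT_{\bP}(-2) \to \cI_{\Gr/\bP} \otimes \rT_{\bP} \to 0,
\end{equation*}
which reduces the task to showing that $\rT_{\bP}(k)$ has vanishing cohomology for $k = -5, -3, -2$. Here I would use the Euler sequence
\begin{equation*}
0 \to \cO_{\bP}(k) \to W \otimes \cO_{\bP}(k+1) \to \rT_{\bP}(k) \to 0.
\end{equation*}
For each of $k = -5, -3, -2$, both $\cO_{\bP}(k)$ and $\cO_{\bP}(k+1)$ are line bundles on $\bP^9$ with degree strictly between $-10$ and $0$; hence, by Serre duality, $\rH^i$ of both vanishes in all degrees $i$. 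The long exact sequence then forces $\rH^i(\bP, \rT_{\bP}(k)) = 0$ for all $i$ and all three values of $k$.

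Given this total vanishing, splitting the resolution into two short exact sequences and chasing them (or, equivalently, using the hypercohomology spectral sequence for the quasi-isomorphism between $\cI_{\Gr/\bP} \otimes \rT_{\bP}$ and the three-term complex above) yields $\rH^i(\bP, \cI_{\Gr/\bP} \otimes \rT_{\bP}) = 0$ for all $i$, which in particular gives the required vanishing in degrees $0$ and $1$ and completes the proof. There is no real obstacle: the argument is a routine sequence of long exact sequences once the resolution of $\cI_{\Gr/\bP}$ is available, and the key input is just the familiar vanishing of cohomology for line bundles of the relevant degrees on~$\bP^9$.
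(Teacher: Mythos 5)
Your proof is correct and follows the same strategy as the paper: reduce via the ideal sheaf sequence to the vanishing of $\rH^0$ and $\rH^1$ of $\cI_{\Gr/\bP}\otimes\rT_\bP$, tensor the Buchsbaum--Eisenbud resolution of $\cI_{\Gr/\bP}$ by $\rT_\bP$, and use the Euler sequence together with the vanishing of cohomology of $\cO_{\bP^9}(k)$ for $-10 < k < 0$. The only cosmetic difference is that the paper states the vanishing $\RGamma(\bP,\rT_\bP(-t))\cong 0$ uniformly for $2\le t\le 9$ while you verify it for the three specific twists that actually appear.
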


\begin{proof}
Taking cohomology of the exact sequence 
\begin{equation*}
0 \to \cI_{\Gr/\bP} \otimes \rT_\bP \to \rT_\bP \to \rT_\bP \vert_\Gr \to 0, 
\end{equation*}
we see it is enough to show $\rH^k(\bP, \cI_{\Gr/\bP} \otimes \rT_\bP) = 0$ for $k=0,1$. 
In fact, we claim the sheaf $\cI_{\Gr/\bP} \otimes \rT_\bP$ has no cohomology. 
Indeed, $\RGamma(\bP, \rT_\bP(-t)) \cong 0$ for $2 \leq t \leq 9$, as can be seen 
from the exact sequence 
\begin{equation*}
0 \to \cO \to W \otimes \cO(1) \to \rT_{\bP} \to 0,  
\end{equation*}
so the claim follows by tensoring the resolution~\eqref{resolution-IGr}
with $\rT_{\bP}$ and taking cohomology. 
\end{proof}

\begin{lemma}
\label{lemma-cohomology-Qid}
We have 
\begin{align*}
\RGamma(\Gr, \cQ(-t)) & \cong 
\begin{cases}
V[0] & t = 0, \\ 
0     & 1 \leq t \leq 5, \\
\Sigma^{(t-2,t-2,t-3,3,3)} V^\vee[-6]  & t \geq 6, 
\end{cases} \\
\RGamma(\Gr, \wedge^2\cQ(-t)) & \cong 
\begin{cases}
\wedge^2V[0] & t = 0, \\ 
0     & 1 \leq t \leq 5, \\
\Sigma^{(t-2,t-3,t-3,3,3)} V^\vee[-6]  & t \geq 6. 
\end{cases}
\end{align*}
\end{lemma}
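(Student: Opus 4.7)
The plan is to apply Borel--Weil--Bott (Proposition~\ref{proposition-BWB}) directly, after rewriting the bundles $\cQ(-t)$ and $\wedge^2\cQ(-t)$ in the form $\Sigma^{\alpha}\cU^\vee \otimes \Sigma^{\beta}\cQ^\vee$ for nonincreasing $\alpha, \beta$. Recall that $\cQ$ has rank $3$ and $\cO(1) \cong \det \cQ$, so $\cO(-m) \cong \Sigma^{(m,m,m)}\cQ^\vee$ for $m \geq 0$. Using the canonical isomorphisms
\begin{align*}
\cQ & \cong \wedge^{2}\cQ^\vee \otimes \det(\cQ) , \\
\wedge^{2}\cQ & \cong \cQ^\vee \otimes \det(\cQ) ,
\end{align*}
and the fact that tensoring by a determinant power shifts all entries of $\beta$ uniformly, one obtains for $t \geq 1$ the identifications
\begin{equation*}
\cQ(-t) \cong \Sigma^{(t,t,t-1)}\cQ^\vee, \qquad \wedge^2\cQ(-t) \cong \Sigma^{(t,t-1,t-1)}\cQ^\vee ,
\end{equation*}
while for $t=0$ one gets the dominant forms $\cQ \cong \Sigma^{(0,0,-1)}\cQ^\vee$ and $\wedge^2\cQ \cong \Sigma^{(0,-1,-1)}\cQ^\vee$ after absorbing a factor of $\cO(1)$. (In all cases the $\cU^\vee$-factor is trivial, so $\alpha = (0,0)$.)

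Next, for each bundle I would form the concatenated weight $\lambda = (\alpha,\beta) \in \bZ^5$ and the vector $\lambda + \rho$ with $\rho = (5,4,3,2,1)$. For $\cQ(-t)$ this gives $\lambda + \rho = (5,4,t+3,t+2,t)$, and for $\wedge^2\cQ(-t)$ it gives $\lambda + \rho = (5,4,t+3,t+1,t)$. A direct case check shows that for $1 \leq t \leq 5$ the entries coming from the last three positions collide with $4$ or $5$ (one easily tabulates the collisions: $t+3 \in \{4,5\}$ for $t \in \{1,2\}$, and so on), so $\lambda + \rho$ fails to be regular and the vanishing part of Proposition~\ref{proposition-BWB} yields $\RGamma = 0$.

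For $t \geq 6$ the last three entries $t+3 > t+2 > t$ (resp.\ $t+3 > t+1 > t$) all exceed $5$, so $\lambda + \rho$ is regular and must be sorted by moving these three entries in front of $(5,4)$. Counting inversions (each of the three large entries is out of order with each of $5$ and $4$) gives length $\ell(\sigma) = 6$ in both cases, and subtracting $\rho$ from the sorted sequence yields $\sigma(\lambda+\rho) - \rho = (t-2,t-2,t-3,3,3)$ and $(t-2,t-3,t-3,3,3)$ respectively. Finally, for $t = 0$ the weight $\lambda + \rho$ is already strictly decreasing with $\sigma = \id$ and $\ell(\sigma) = 0$, and one computes $\sigma(\lambda+\rho)-\rho$ to be $(0,0,0,0,-1)$ and $(0,0,0,-1,-1)$, whose associated $\GL(V)$-representations on $V^\vee$ are dual to $V^\vee$ and $\wedge^{2}V^\vee$, hence equal to $V$ and $\wedge^2 V$ respectively, matching the asserted formulas.

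The computation is entirely mechanical once the bundles are written in dominant form; the only mild obstacle is keeping track of determinant shifts when rewriting $\cQ$ and $\wedge^2\cQ$ in terms of $\cQ^\vee$, and carefully counting inversions to obtain the correct cohomological degree $-6$.
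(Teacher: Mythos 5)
Your proposal is correct and takes essentially the same route as the paper: rewrite $\cQ(-t)\cong\Sigma^{(t,t,t-1)}\cQ^\vee$ and $\wedge^2\cQ(-t)\cong\Sigma^{(t,t-1,t-1)}\cQ^\vee$ and then apply Proposition~\ref{proposition-BWB}. The paper's proof is just these two observations, leaving the weight/length bookkeeping implicit, while you have carried out that bookkeeping explicitly (and correctly).
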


\begin{proof}
Note that $\cQ(-t) \cong \Sigma^{(t,t,t-1)} \cQ^\vee$ 
and $\wedge^2\cQ(-t) \cong \Sigma^{(t,t-1,t-1)} \cQ^\vee$. 
Now the result follows from Proposition~\ref{proposition-BWB}.
\end{proof}

\begin{lemma}
\label{lemma-cohomology-N}
For $2 \leq t \leq 6$, we have 
$\RGamma(\Gr, \rN_{\Gr/\bP}(-t)) \cong 0$. 
\end{lemma}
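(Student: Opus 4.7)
The plan is to reduce the claim to Lemma~\ref{lemma-cohomology-Qid} via the identification of $\rN_{\Gr/\bP}$ given by Lemma~\ref{lemma-normal-bundle}.

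First I would apply Lemma~\ref{lemma-normal-bundle} with $n = \dim V = 5$, which gives the isomorphism $\rN_{\Gr/\bP} \cong \wedge^2 \cQ(1)$ (or, equivalently via $\det \cQ = \cO(1)$, the isomorphism $\rN_{\Gr/\bP} \cong \cQ^\vee(2)$). Twisting by $\cO(-t)$, this yields
\begin{equation*}
\rN_{\Gr/\bP}(-t) \cong \wedge^2 \cQ(-(t-1)).
\end{equation*}

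Next, for the range $2 \leq t \leq 6$ we have $1 \leq t-1 \leq 5$. Lemma~\ref{lemma-cohomology-Qid} states precisely that $\RGamma(\Gr, \wedge^2\cQ(-s)) \cong 0$ for $1 \leq s \leq 5$, so substituting $s = t-1$ gives the desired vanishing.

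There is no real obstacle here: once Lemma~\ref{lemma-normal-bundle} has rewritten the normal bundle in terms of a Schur functor of $\cQ$, the result is an immediate special case of the Borel--Weil--Bott vanishing already packaged in Lemma~\ref{lemma-cohomology-Qid}. The only thing to double-check is the bookkeeping of the twist, which works out as above.
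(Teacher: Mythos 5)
Your proof is correct and is exactly the argument the paper has in mind: its one-line proof simply says to combine Lemma~\ref{lemma-normal-bundle} and Lemma~\ref{lemma-cohomology-Qid}, and you have spelled out the (correct) twist bookkeeping showing $\rN_{\Gr/\bP}(-t) \cong \wedge^2\cQ(-(t-1))$ lands in the vanishing range $1 \le t-1 \le 5$.
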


\begin{proof}
Combine Lemmas~\ref{lemma-normal-bundle} and~\ref{lemma-cohomology-Qid}. 
\end{proof}

\subsection{Computations on a {\GPK} threefold} 
\label{subsection-BWB-KCY}
Let $X = \Gr_1 \cap \Gr_2$ be a {\GPK} threefold. 
We write $\cQ_i$ for the tautological rank $3$ quotient 
bundle on $\Gr_i$, and $\rN_i = \rN_{\Gr_i/\bP}$ for the normal bundle of $\Gr_i \subset \bP$. 

\begin{lemma}
For $i=1,2$, the ideal sheaf $\cI_{X/\Gr_i}$ of $X \subset \Gr_i$ admits a resolution 
of the form 
\begin{equation}
\label{resolution-IX}
0 \to \cO(-5) \to V^\vee \otimes \cO(-3) \to V \otimes \cO(-2) \to \cI_{X/\Gr_i} \to 0
\end{equation}
\end{lemma}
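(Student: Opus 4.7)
The plan is to obtain the desired resolution by restricting the resolution of $\cI_{\Gr_j/\bP}$ from Lemma~\ref{lemma-IGr} to $\Gr_i$, where $\{i,j\} = \{1,2\}$. Since $X = \Gr_1 \cap \Gr_2$, we have the ideal equality $\cI_{X/\bP} = \cI_{\Gr_i/\bP} + \cI_{\Gr_j/\bP}$ in $\cO_\bP$, so $\cI_{X/\Gr_i}$ coincides with the image of the natural composition $\cI_{\Gr_j/\bP} \hookrightarrow \cO_\bP \twoheadrightarrow \cO_{\Gr_i}$.

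The only nontrivial input needed is the Tor-vanishing
\[
\Tor_k^{\cO_\bP}(\cO_{\Gr_j}, \cO_{\Gr_i}) = 0 \quad \text{for all } k \geq 1.
\]
Granting this, I tensor the short exact sequence $0 \to \cI_{\Gr_j/\bP} \to \cO_\bP \to \cO_{\Gr_j} \to 0$ with $\cO_{\Gr_i}$: the vanishing of $\Tor_1$ shows that the induced map $\cI_{\Gr_j/\bP} \otimes_{\cO_\bP} \cO_{\Gr_i} \to \cO_{\Gr_i}$ is injective, so its image is canonically identified with $\cI_{X/\Gr_i}$. The vanishing of the higher Tors likewise shows that restricting the length-$3$ resolution of $\cI_{\Gr_j/\bP}$ from Lemma~\ref{lemma-IGr} to $\Gr_i$ remains exact, yielding the stated resolution.

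The Tor-vanishing itself will be the main point, and follows from Serre's criterion for proper intersection: the subvarieties $\Gr_i, \Gr_j \subset \bP$ are smooth (hence Cohen--Macaulay) of codimension $3$, and by hypothesis their intersection $X$ is a smooth threefold, so $\codim_\bP X = 6 = \codim_\bP \Gr_i + \codim_\bP \Gr_j$, i.e. the intersection is of expected codimension. Equivalently, at any point $x \in X$ one may pick $3$ local generators of $\cI_{\Gr_i/\bP}$ and $3$ local generators of $\cI_{\Gr_j/\bP}$; together they form a regular sequence in the regular local ring $\cO_{\bP,x}$ (since they cut out $X$, smooth of codimension $6$), and the corresponding Koszul-type analysis shows that the Tors vanish. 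Once this is in hand the rest of the argument is formal, so the entire proof really just reduces to a single Tor-vanishing assertion.
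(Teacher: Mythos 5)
Your proposal is correct, but it takes a genuinely different route from the paper's proof, which is a one-liner: the paper observes that $X \subset \Gr_i$ is itself a codimension-$3$ Pfaffian variety (the rank-$\leq 2$ degeneracy locus of the $5\times 5$ skew matrix of linear forms obtained by pulling the Pl\"ucker relations of $\Gr_j$ back to $\Gr_i$), and then invokes the Buchsbaum--Eisenbud structure theorem a second time, exactly as in the proof of Lemma~\ref{lemma-IGr}. Your argument instead transports the resolution of $\cI_{\Gr_j/\bP}$ already established in Lemma~\ref{lemma-IGr} down to $\Gr_i$ by restriction, with the essential input being the Tor-vanishing $\mathrm{Tor}_k^{\cO_\bP}(\cO_{\Gr_j}, \cO_{\Gr_i}) = 0$ for $k \geq 1$. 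Your justification of this is sound, though a little loosely worded: since $\Gr_i$, $\Gr_j$, and $X$ are smooth, they are local complete intersections in $\bP$, so at any $x \in X$ one can choose $3$ local generators of each of $\cI_{\Gr_i/\bP,x}$ and $\cI_{\Gr_j/\bP,x}$ whose concatenation generates $\cI_{X/\bP,x}$ and is therefore a regular sequence (six generators of an ideal of height six in a regular local ring); the Koszul resolution of $\cO_{\Gr_j,x}$ over $\cO_{\bP,x}$ thus stays exact after tensoring with $\cO_{\Gr_i,x}$, and at points off $X$ one of the two structure sheaves vanishes locally so there is nothing to check. Both routes are valid: the paper's is shorter given that Buchsbaum--Eisenbud has already been cited, while yours is more self-contained once Lemma~\ref{lemma-IGr} is known and makes completely transparent why the same shape of resolution, with the same $\GL(V)$-representations in the middle terms, reappears on $\Gr_i$.
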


\begin{proof}
Analogously to Lemma~\ref{lemma-IGr}, 
this follows by regarding $X \subset \Gr_{i}$ as a Pfaffian 
variety. 
\end{proof}

\begin{lemma}
\label{lemma-class-X-Gr}
The class of $X$ in the Chow ring of $\Gr_i$ is $5H^3$, where 
$H$ denotes the Pl\"{u}cker hyperplane class. 
\end{lemma}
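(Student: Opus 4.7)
The plan is to read the fundamental class $[X]$ off the leading (codimension~$3$) term of the Chern character of $\cO_X$, viewed as a sheaf on $\Gr_i$, using the Pfaffian-type resolution~\eqref{resolution-IX} of $\cI_{X/\Gr_i}$ already in hand.

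First, prepending the tautological sequence $0 \to \cI_{X/\Gr_i} \to \cO_{\Gr_i} \to \cO_X \to 0$ to \eqref{resolution-IX} produces a four-term locally free resolution
\[
0 \to \cO_{\Gr_i}(-5) \to V^\vee \otimes \cO_{\Gr_i}(-3) \to V \otimes \cO_{\Gr_i}(-2) \to \cO_{\Gr_i} \to \cO_X \to 0
\]
on the smooth variety $\Gr_i$. Taking the alternating sum of Chern characters yields
\[
\ch(\cO_X) \;=\; 1 \;-\; 5\,e^{-2H} \;+\; 5\,e^{-3H} \;-\; e^{-5H}.
\]

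Next I would expand this in powers of $H$. The coefficients in degrees $0$, $1$, $2$ are forced to vanish because $\cO_X$ has support of codimension $3$, which gives a useful consistency check: one verifies $1-5+5-1 = 0$, then $10 - 15 + 5 = 0$, then $-10 + \tfrac{45}{2} - \tfrac{25}{2} = 0$. In degree $3$, the computation is
\[
-5 \cdot \tfrac{(-2)^3}{6} \;+\; 5 \cdot \tfrac{(-3)^3}{6} \;-\; \tfrac{(-5)^3}{6} \;=\; \tfrac{40 - 135 + 125}{6} \;=\; 5,
\]
so that $\ch_3(\cO_X) = 5\,H^3$ in the Chow ring of $\Gr_i$.

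Finally, since $X$ is smooth of pure codimension $3$ in the smooth variety $\Gr_i$, the leading component of $\ch(\cO_X)$ coincides with the cycle class $[X]$, which gives $[X] = 5H^3$. There is essentially no serious obstacle: the only point worth flagging is the appeal to the standard principle that for a sheaf supported in codimension $c$ on a smooth variety, the degree-$c$ part of its Chern character is the cycle class of its support (with multiplicities), applied here to the smooth irreducible subvariety $X \subset \Gr_i$.
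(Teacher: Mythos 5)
Your proof is correct and matches the paper's approach exactly: both use the Pfaffian resolution~\eqref{resolution-IX} prepended by $0 \to \cI_{X/\Gr_i} \to \cO_{\Gr_i} \to \cO_X \to 0$ to obtain a locally free resolution of $\cO_X$ on $\Gr_i$, then read off $[X]$ from $\ch_3(\cO_X)$. The paper simply states ``the result follows by taking $\ch_3$'' where you spell out the arithmetic; your version also correctly invokes the standard fact relating the codimension-$c$ part of the Chern character to the fundamental class of the support.
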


\begin{proof}
By~\eqref{resolution-IX} there is a resolution of $\cO_X$ on $\Gr_i$ of the form 
\begin{equation}
\label{resolution-OX}
0 \to \cO(-5) \to V^\vee \otimes \cO(-3) \to V \otimes \cO(-2) \to \cO \to \cO_{X} \to 0 . 
\end{equation}
The result follows by taking $\ch_3$. 
\end{proof}

\begin{lemma}
\label{lemma-Ni-vanishing}
For $t \geq 1$ we have 
\begin{equation*}
\rH^0(X, \cQ_i \vert_X(-tH)) = \rH^0(X, \wedge^2(\cQ_i \vert_X)(-tH)) = 0.
\end{equation*}
\end{lemma}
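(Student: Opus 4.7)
The plan is to compute $\rH^0(X, \cQ_i\vert_X(-tH))$ (and similarly for the exterior square) by resolving $\cO_X$ on $\Gr_i$ via the Eagon--Northcott-type resolution \eqref{resolution-OX}, and then applying Lemma~\ref{lemma-cohomology-Qid} to the twisted terms. The key observation is that Lemma~\ref{lemma-cohomology-Qid} gives $\rH^q(\Gr, \cQ(-s)) = 0$ for every $q \leq 5$ and every $s \geq 1$: the indices $1 \leq s \leq 5$ give vanishing $\RGamma$, while $s \geq 6$ places all cohomology in degree $6$. The same statement holds for $\wedge^2\cQ(-s)$.

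First I would tensor the resolution \eqref{resolution-OX} by $\cQ_i(-tH)$ (with $t \geq 1$) to obtain an exact complex
\begin{equation*}
\cQ_i(-5-t) \to V^\vee \otimes \cQ_i(-3-t) \to V \otimes \cQ_i(-2-t) \to \cQ_i(-t),
\end{equation*}
placed in degrees $-3, -2, -1, 0$, which is quasi-isomorphic to $\cQ_i\vert_X(-tH)$. Second, I would run the hypercohomology spectral sequence $E_1^{p,q} = \rH^q(\Gr_i, \cC^p) \Rightarrow \rH^{p+q}(X, \cQ_i\vert_X(-tH))$. The contributions to $\rH^0$ come from $(p,q)$ with $p+q=0$ and $-3 \leq p \leq 0$, i.e.\ the four values $(0,0), (-1,1), (-2,2), (-3,3)$. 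In each case the twist by $-t-s$ (with $s \in \{0,2,3,5\}$) satisfies $t+s \geq 1$, and the corresponding cohomological degree $q \in \{0,1,2,3\}$ is at most $5$, so Lemma~\ref{lemma-cohomology-Qid} yields vanishing on every $E_1$-term along the anti-diagonal $p+q=0$. This forces $\rH^0(X, \cQ_i\vert_X(-tH)) = 0$.

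Third, I would repeat the identical argument with $\wedge^2\cQ_i$ in place of $\cQ_i$, appealing to the second half of Lemma~\ref{lemma-cohomology-Qid}, which exhibits exactly the same vanishing pattern (zero for $1 \leq s \leq 5$, concentrated in degree $6$ for $s \geq 6$).

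There is no genuine obstacle here: once Lemma~\ref{lemma-cohomology-Qid} is in hand, the argument is a mechanical spectral sequence bookkeeping. The only thing to check carefully is that the four relevant twists land in the combined vanishing range, which they do because the maximal cohomological degree on the anti-diagonal is $3 < 6$ while the minimal twist amount is $t \geq 1$. One could equivalently avoid spectral sequences by using the short exact sequence $0 \to \cI_{X/\Gr_i} \otimes \cQ_i(-tH) \to \cQ_i(-tH) \to \cQ_i\vert_X(-tH) \to 0$ and reducing vanishing of $\rH^0$ on $X$ to vanishing of $\rH^0(\Gr_i, \cQ_i(-tH))$ and $\rH^1(\Gr_i, \cI_{X/\Gr_i} \otimes \cQ_i(-tH))$, the latter again handled by breaking \eqref{resolution-IX} into short exact sequences.
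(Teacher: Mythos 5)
Your proof is correct and follows essentially the same route as the paper: tensor the resolution \eqref{resolution-OX} by $\cQ_i(-tH)$, form the resulting length-four complex quasi-isomorphic to $\cQ_i\vert_X(-tH)$, and kill $\rH^0$ via the hypercohomology spectral sequence together with Lemma~\ref{lemma-cohomology-Qid}. The only difference is that you spell out the $E_1$-term bookkeeping (and mention the alternative short-exact-sequence reduction), which the paper leaves implicit.
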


\begin{proof}
From~\eqref{resolution-OX} we get a resolution 
\begin{equation*}
0 \to \cQ_i(-(t+5)) \to V^\vee \otimes \cQ_i(-(t+3)) \to V \otimes \cQ_i(-(t+2)) \to \cQ_i(-tH) \to 
\cQ_i\vert_X(-tH) \to 0 . 
\end{equation*} 
Let $\cR_i^\bullet$ be the complex concentrated in degrees $[-3, 0]$ given by the 
first four terms, so that there is a quasi-isomorphism 
\begin{equation*}
\cR_i^\bullet \simeq \cQ_i\vert_X(-tH).
\end{equation*} 
Then the resulting spectral sequence
\begin{equation*}
E_1^{p,q} = \rH^q(X, \cR_i^p) \implies \rH^{p+q}(X, \cQ_i\vert_X(-tH)) 
\end{equation*}
combined with Lemma~\ref{lemma-cohomology-Qid} 
shows $\rH^0(X, \cQ_i\vert_X(-tH)) = 0$ for $t \geq 1$. 
The same argument also proves $\rH^0(X, \wedge^2(\cQ_i \vert_X)(-tH)) = 0$ for $t \geq 1$. 
\end{proof}

\begin{lemma}
\label{lemma-restriction-Qi}
The restriction maps
\begin{align*}
V \cong \rH^0(\Gr_i, \cQ_i) & \to \rH^0(X, \cQ_i \vert_X) , ~ i = 1,2, \\ 
W^\vee \cong \rH^0(\bP, \cO_{\bP}(1)) & \to \rH^0(X, \cO_X(1)) , 
\end{align*}
are isomorphisms. 
\end{lemma}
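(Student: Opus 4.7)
The plan is to use the Koszul-type resolution~\eqref{resolution-OX} of $\cO_X$ on $\Gr_i$, tensor it with the relevant sheaf, and degenerate the resulting hypercohomology spectral sequence.

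For the isomorphism involving $\cQ_i$, I would tensor~\eqref{resolution-OX} by $\cQ_i$ to obtain a complex $\cR^\bullet$ concentrated in degrees $[-3,0]$ with terms $\cQ_i(-5)$, $V^\vee \otimes \cQ_i(-3)$, $V \otimes \cQ_i(-2)$, $\cQ_i$, quasi-isomorphic to $\cQ_i|_X$. Lemma~\ref{lemma-cohomology-Qid} yields $\RGamma(\Gr_i, \cQ_i(-t)) = 0$ for $1 \le t \le 5$, killing every negative-degree term of $\cR^\bullet$. The hypercohomology spectral sequence $E_1^{p,q} = \rH^q(\Gr_i, \cR^p) \Rightarrow \rH^{p+q}(X, \cQ_i|_X)$ then degenerates to produce an isomorphism $\rH^0(X, \cQ_i|_X) \cong \rH^0(\Gr_i, \cQ_i) = V$, which coincides with the natural restriction map by functoriality.

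For the isomorphism involving $\cO_\bP(1)$, I would factor the restriction as
\[
\rH^0(\bP, \cO_\bP(1)) \longrightarrow \rH^0(\Gr_i, \cO_{\Gr_i}(1)) \longrightarrow \rH^0(X, \cO_X(1))
\]
and show each arrow is an isomorphism. For the first arrow, twisting~\eqref{resolution-IGr} by $\cO_\bP(1)$ exhibits $\cI_{\Gr_i/\bP}(1)$ as resolved by the sheaves $\cO_\bP(-4)$, $V^\vee \otimes \cO_\bP(-2)$, $V \otimes \cO_\bP(-1)$, all of which are acyclic on $\bP$; hence $\rH^0(\bP, \cI_{\Gr_i/\bP}(1)) = \rH^1(\bP, \cI_{\Gr_i/\bP}(1)) = 0$, yielding the required isomorphism. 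For the second arrow, I would repeat the spectral sequence argument of the preceding paragraph with $\cO_{\Gr_i}(1)$ in place of $\cQ_i$; the needed vanishings $\RGamma(\Gr_i, \cO_{\Gr_i}(-t)) = 0$ for $t \in \set{1,2,4}$ follow from Proposition~\ref{proposition-BWB} by direct inspection, since in each case $\lambda + \rho$ has a repeated entry and thus fails to be regular.

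No substantive obstacle is anticipated: the argument reduces to two applications of the same hypercohomology spectral sequence, with the needed vanishings either recorded in Lemma~\ref{lemma-cohomology-Qid} or immediate from Borel--Weil--Bott. The only minor care needed is in verifying that the spectral sequence isomorphism is identified with the natural restriction map, which is automatic from the functoriality of $\RGamma$ applied to the augmentation $\cR^\bullet \to \cF|_X$.
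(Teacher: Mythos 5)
Your argument is correct and is essentially the paper's proof: the paper also uses the Pfaffian resolution of $\cI_{X/\Gr_i}$ (equivalently, of $\cO_X$) tensored with $\cQ_i$ (resp.\ $\cO(1)$), together with the vanishings of Lemma~\ref{lemma-cohomology-Qid} and Borel--Weil--Bott, the only cosmetic difference being that the paper takes cohomology of the short exact sequence $0 \to \cI_{X/\Gr_i}\otimes\cF \to \cF \to \cF|_X \to 0$ and shows $\RGamma(\Gr_i, \cI_{X/\Gr_i}\otimes\cF)=0$ directly rather than phrasing it via a hypercohomology spectral sequence. Your explicit factorization through $\rH^0(\Gr_i, \cO_{\Gr_i}(1))$ for the second map spells out a step the paper leaves implicit, but the content is the same.
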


\begin{proof} 
Taking cohomology of the exact sequence
\begin{equation*}
0 \to \cI_{X/\Gr_i} \otimes \cQ_i \to \cQ_i \to \cQ_i \vert_X \to 0, 
\end{equation*}
the first claim follows from the vanishing $\RGamma(\Gr_i, \cI_{X/\Gr_i} \otimes \cQ_i) = 0$, 
which is a consequence of the resolution~\eqref{resolution-IX} combined with 
Lemma~\ref{lemma-cohomology-Qid}. The second claim is proved similarly. 
\end{proof} 

\begin{lemma}
\label{lemma-restriction-Ni}
The restriction maps $\rH^0(\Gr_i, \rN_i) \to \rH^0(X, \rN_i \vert_X)$, $i=1,2$,  
are isomorphisms.
\end{lemma}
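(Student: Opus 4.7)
The plan is to mimic the argument for Lemma~\ref{lemma-restriction-Qi}. Consider the short exact sequence
\begin{equation*}
0 \to \cI_{X/\Gr_i} \otimes \rN_i \to \rN_i \to \rN_i \vert_X \to 0
\end{equation*}
on $\Gr_i$. Taking cohomology, the restriction map $\rH^0(\Gr_i,\rN_i) \to \rH^0(X,\rN_i\vert_X)$ is an isomorphism as soon as $\rH^0$ and $\rH^1$ of $\cI_{X/\Gr_i}\otimes \rN_i$ both vanish. So the whole task reduces to proving
\begin{equation*}
\RGamma(\Gr_i, \cI_{X/\Gr_i} \otimes \rN_i) \cong 0
\end{equation*}
(or at least that its $\rH^0$ and $\rH^1$ vanish).

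To prove this vanishing I would tensor the resolution~\eqref{resolution-IX} of $\cI_{X/\Gr_i}$ with $\rN_i$ to obtain a four-term exact sequence
\begin{equation*}
0 \to \rN_i(-5) \to V^\vee \otimes \rN_i(-3) \to V \otimes \rN_i(-2) \to \cI_{X/\Gr_i}\otimes \rN_i \to 0,
\end{equation*}
viewed as a quasi-isomorphism from the complex formed by the first three terms (placed in degrees $-3,-2,-1$) to $\cI_{X/\Gr_i}\otimes \rN_i$. The associated hypercohomology spectral sequence then computes $\RGamma(\Gr_i, \cI_{X/\Gr_i}\otimes \rN_i)$ from $\RGamma(\Gr_i, \rN_i(-t))$ for $t = 2,3,5$.

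By Lemma~\ref{lemma-cohomology-N}, each of these $\RGamma(\Gr_i,\rN_i(-t))$ is zero in the relevant range $2 \leq t \leq 6$, so all three terms of the resolution are acyclic. Consequently $\RGamma(\Gr_i, \cI_{X/\Gr_i}\otimes \rN_i) \cong 0$, which in particular gives the vanishing of $\rH^0$ and $\rH^1$, completing the proof. There is no real obstacle here: the whole argument is a direct analogue of the proof of Lemma~\ref{lemma-restriction-Qi}, and the only nontrivial input, the vanishing of $\RGamma(\Gr_i, \rN_i(-t))$ for the relevant twists, has already been isolated as Lemma~\ref{lemma-cohomology-N} via Borel--Weil--Bott applied to $\rN_i \cong (\wedge^{n-4}\cQ_i^\vee)(2) = \cQ_i^\vee(2)$ (Lemma~\ref{lemma-normal-bundle}).
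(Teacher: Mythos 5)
Your argument is exactly the paper's: tensor the short exact sequence of $\cI_{X/\Gr_i}$, reduce to $\rH^k(\Gr_i,\cI_{X/\Gr_i}\otimes\rN_i)=0$ for $k=0,1$, then tensor the Pfaffian resolution~\eqref{resolution-IX} with $\rN_i$ and invoke Lemma~\ref{lemma-cohomology-N} to kill all cohomology. Correct, and matching the paper's proof in both structure and key ingredients.
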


\begin{proof}
Taking cohomology of the exact sequence 
\begin{equation*}
0 \to \cI_{X/\Gr_i} \otimes \rN_i \to \rN_i \to \rN_i \vert_X \to 0,  
\end{equation*}
we see it is enough to show $\rH^k(\Gr_i, \cI_{X/\Gr_i} \otimes \rN_i) = 0$ for $k=0,1$. 
In fact, we claim the sheaf $\cI_{X/\Gr_i} \otimes \rN_i$ has no cohomology. 
This follows by tensoring the resolution~\eqref{resolution-IX} 
with $\rN_i$ and applying Lemma~\ref{lemma-cohomology-N}.
\end{proof}


\section{Macaulay2 computation}
\label{appB}

In this appendix we include the code used to find an orthogonal
$10\times 10$ matrix over the finite field $\bF_{103}$ such
that the corresponding {\GPK} threefold is smooth over $\bF_{103}$.

\bigskip

\begin{verbatim}
dotP = (v,u) -> (transpose(v)*u)_(0,0);
proj = (v,u) -> dotP(v,u)/dotP(u,u)*u;

p = 103; -- must equal 3 mod 4
q = 51; -- (p-1)/2
r = 26; -- (p+1)/4

isSquare = x->((x^q % 103) == 1);
sqRoot = x -> (x^r % 103);

kk = ZZ/p;

-- use Gram-Schmidt to find a random orthogonal matrix

v_1 = random(kk^10, kk^1);
while (not isSquare(dotP(v_1,v_1))) do v_1 = random(kk^10, kk^1);

for i from 2 to 10 do (
    test = true;
    while (test) do (
        v_i = random(kk^10, kk^1); 
        for j from 1 to i-1 do v_i = v_i-proj(v_i, v_j);
        test = not isSquare(dotP(v_i, v_i));
        )    
    )

for i from 1 to 10 do v_i = 1/sqRoot(dotP(v_i, v_i))*v_i;

-- T will be our candidate orthogonal matrix 

T = v_1 | v_2 | v_3 | v_4 | v_5 | v_6 | v_7 | v_8 | v_9 | v_10; 

R = kk[x01, x02, x03, x04, x12, x13, x14, x23, x24, x34];
M = matrix({{0, x01, x02, x03, x04}, 
        {-x01, 0, x12, x13, x14}, 
        {-x02, -x12, 0, x23, x24}, 
        {-x03, -x13, -x23, 0, x34}, 
        {-x04, -x14, -x24, -x34, 0}});
I = pfaffians(4,M);

V = genericMatrix(R, x01, 1, 10);
J = sub(I, V*T);

-- compute in the affine patch where x01 = 1

S = kk[x02, x03, x04, x12, x13, x14];
f = map(S, R, {1, x02, x03, x04, x12, x13, x14, 
        x02*x13-x03*x12, x02*x14-x04*x12, x03*x14-x04*x13});
CY = f J;
Jac = jacobian(CY);
Jac3 = minors(3, Jac);
Sing = Jac3+CY;
dim Sing -- answers -1, i.e., the empty set
\end{verbatim}
\bigskip

\noindent
The above code verifies, for a given choice of matrix $T$, that
the corresponding {\GPK} threefold is smooth in the affine patch where
$x_{01} = 1$.  However, one can directly verify that for the matrix
\[ \left (\begin{array}{cccccccccc}
-31 & 48 & -31 & 28 & -29 & 4 & 42 & -1 & -8 & 37  \\
2 & -1 & -36 & -26 & -12 & 9 & 6 & -7 & -14 & -14  \\
15 & 42 & 23 & 34 & 36 & -25 & -51 & 28 & 19 & -41  \\
-43 & -22 & -42 & -14 & -28 & 17 & 21 & 31 & -30 & 26  \\
-33 & -2 & 13 & -3 & -48 & 9 & 39 & 34 & -48 & -4  \\
34 & -26 & 33 & -25 & -22 & 45 & -33 & -26 & -23 & -43 \\
49 & -1 & 15 & -27 & -47 & -28 & -36 & 17 & -45 & -41 \\
-15 & 33 & -50 & -20 & -17 & 49 & 16 & 4 & -48 & 10 \\
-39 & -3 & -12 & 50 & -35 & 15 & -19 & -25 & 36 & 51 \\
-47 & -40 & -39 & 41 & 2 & -13 & -3 & 39 & 42 & 21 
\end{array}
\right )
\]
the same is true in all the patches $x_{ij} =1$, $0\leq i<j<5$.


\providecommand{\bysame}{\leavevmode\hbox to3em{\hrulefill}\thinspace}
\providecommand{\MR}{\relax\ifhmode\unskip\space\fi MR }
\providecommand{\MRhref}[2]{%
  \href{http://www.ams.org/mathscinet-getitem?mr=#1}{#2}
}
\providecommand{\href}[2]{#2}


\end{document}